\def\textmatrix#1&#2\\#3&#4\\{\bigl({#1 \atop #3}\ {#2 \atop #4}\bigr)}
\def\dispmatrix#1&#2\\#3&#4\\{\left({#1 \atop #3}\ {#2 \atop #4}\right)}
\newcommand{\beg}{\begin{equation}}
	\newcommand{\eeg}{\end{equation}}
\newcommand{\ben}{\begin{eqnarray*}}
	\newcommand{\een}{\end{eqnarray*}}
\newtheorem{thm}{Theorem}[section]
\newtheorem{cor}[thm]{Corollary}
\newtheorem{lem}[thm]{Lemma}
\newtheorem{prop}[thm]{Proposition}
\numberwithin{equation}{section} \theoremstyle{definition}
\newtheorem{defn}[thm]{Definition}
\newtheorem{rem}[thm]{Remark}
\newtheorem{eg}[thm]{Example}
\newcommand{\HS}{\mathcal H}
\newcommand{\C}{\mathbb{C}}
\newcommand{\D}{\mathbb{D}}
\newcommand{\T}{\mathbb{T}}
\newcommand{\Arm}{\mathbb A_r^m}
\newcommand{\Ar}{\mathbb A_r}
\newcommand{\ov}{\overline}
\def\textmatrix#1&#2\\#3&#4\\{\bigl({#1 \atop #3}\ {#2 \atop #4}\bigr)}
\def\dispmatrix#1&#2\\#3&#4\\{\left({#1 \atop #3}\ {#2 \atop #4}\right)}
\begin{document}
	\title[Dilation of normal operators in an annulus]{Dilation of normal operators associated with an annulus}
	\author[Pal and Tomar]{SOURAV PAL AND NITIN TOMAR}
	
	\address[Sourav Pal]{Mathematics Department, Indian Institute of Technology Bombay,
		Powai, Mumbai - 400076, India.} \email{sourav@math.iitb.ac.in , souravmaths@gmail.com}
	
	\address[Nitin Tomar]{Mathematics Department, Indian Institute of Technology Bombay, Powai, Mumbai-400076, India.} \email{tnitin@math.iitb.ac.in}		
	
	\keywords{Spectral set, $\mathbb{A}_r^m$-contraction, Dirichlet problem, Normal boundary dilation }	
	
	\subjclass[2010]{47A20, 47A25}	
	
	\thanks{The first named author is supported by the ``Early Research Achiever Award Grant" of IIT Bombay with Grant No. RI/0220-10001427-001, the CDPA of the Government of India. The second named author is supported by the Prime Minister Research Fellowship with Award No. 1300140 of the Government of India.}	
	

	\begin{abstract}
For $0<r<1$, let us consider the following annulus:
\[
\mathbb A_r= \{ z\in \mathbb C\, : \, r<|z|<1 \}.
\]	
A Hilbert space operator $T$ for which $\overline{\mathbb A}_r$ is a spectral set is called an $\mathbb A_r$-\textit{contraction}. Also, a normal operator $U$ whose spectrum lies on the boundary $\partial \mathbb A_r$ of $\mathbb A_r$ is called an $\mathbb A_r$-\textit{unitary}. We prove that any $m$ number of commuting normal $\mathbb A_r$-contractions $N_1, \dots , N_m$ can be simultaneously dilated to commuting $\mathbb A_r$-unitaries $U_1, \dots , U_m$. To construct such a dilation, we solve a Dirichlet problem for the polyannulus $\mathbb A_r^m$. Also, we show that any finitely many doubly commuting subnormal $\mathbb A_r$-contractions simultaneously dilate to commuting $\mathbb A_r$-unitaries. Finally, we show that such a simultaneous $\mathbb A_r$-unitary dilation holds for any finite number of doubly commuting $2 \times 2$ scalar $\mathbb A_r$-contractions.

	\end{abstract}

	\maketitle
	
	\section{Introduction}
	
	\vspace{0.3cm}
	
	\noindent  \noindent Throughout the paper every operator is a bounded linear operator acting on a complex Hilbert space. For a Hilbert space $\HS$, $\mathcal B(\HS)$ is the space of all operators defined on $\HS$. A contraction is an operator whose norm is less than or equal to $1$. We denote by $\C , \mathbb R, \mathbb Z, \mathbb N$ the set of complex numbers, real numbers, integers and positive integers respectively. Also, $\D$ and $\T$ denote the open unit disc and the unit circle respectively in the complex plane $\C$. We define Taylor joint spectrum, spectral set, rational dilation etc. in Section 2.\\ 

Every operator is a scalar multiple of a contraction and a contraction can be realized as a part of a unitary operator as was shown by Sz.-Nagy in \cite{nagy1}.

\begin{thm}[Sz.-Nagy, 1953]
If $T$ is a contraction acting on a Hilbert space $\mathcal H$,
then there exists a Hilbert space $\mathcal K \supseteq \mathcal
H$ and a unitary $U$ on $\mathcal K$ such that
\[
f(T)=P_{\mathcal H}f(U)|_{\mathcal H}, \quad \text{ for every rational function } f \text{ with poles off } \; \ov{\D},
\]
where $f(T)=p(T)q(T)^{-1}$ when $f=p \slash q$ with $p, q \in \C[z]$.

\end{thm}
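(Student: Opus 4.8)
The plan is to reduce the assertion to the construction of a single operator $U$ satisfying the \emph{power} version $T^n = P_{\mathcal H} U^n|_{\mathcal H}$ for all $n \ge 0$, and then to bootstrap the polynomial identity up to the full rational functional calculus. The construction of $U$ proceeds in two stages: first an isometric dilation of $T$, then a unitary extension of that isometry.

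First I would build the minimal isometric dilation by the standard matricial construction. Put $D_T = (I - T^*T)^{1/2} \in \mathcal B(\mathcal H)$ and work on $\mathcal K_+ = \bigoplus_{n \ge 0}\mathcal H$, identifying $\mathcal H$ with the zeroth summand. Define $V \colon \mathcal K_+ \to \mathcal K_+$ by $V(h_0,h_1,h_2,\dots) = (Th_0, D_Th_0, h_1, h_2, \dots)$. The operator identity $T^*T + D_T^2 = I$ shows at once that $V$ is an isometry, and since $V^n(h,0,0,\dots) = (T^n h, D_T T^{n-1}h, \dots, D_T h, 0, \dots)$ has $T^nh$ as its zeroth coordinate, we get $P_{\mathcal H}V^n|_{\mathcal H} = T^n$ for every $n \ge 0$. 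Next I would extend the isometry $V$ to a unitary $U$ on a larger space: writing $\mathcal N = (\mathrm{ran}\,V)^\perp$ and $\mathcal K = \mathcal K_+ \oplus \bigoplus_{j \ge 1}\mathcal N$, the map $U(k;n_1,n_2,\dots) = (Vk + n_1; n_2, n_3, \dots)$ is readily checked to be unitary with $U|_{\mathcal K_+} = V$. In particular $\mathcal K_+$ is $U$-invariant, so $U^n x = V^n x$ for $x \in \mathcal K_+$ and $n \ge 0$, and hence for $h \in \mathcal H$ we obtain $P_{\mathcal H}U^n h = P_{\mathcal H}V^n h = T^n h$. Since $U$ is normal with $\sigma(U) \subseteq \mathbb T$, this already yields von Neumann's inequality $\|p(T)\| = \|P_{\mathcal H}p(U)|_{\mathcal H}\| \le \|p(U)\| = \sup_{\overline{\mathbb D}}|p|$ and, by linearity, $p(T) = P_{\mathcal H}p(U)|_{\mathcal H}$ for every polynomial $p$.

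Finally I would pass from polynomials to rational functions $f = p/q$ with poles off $\overline{\mathbb D}$. Since $\sigma(T) \subseteq \overline{\mathbb D}$ (the spectral radius is at most $\|T\| \le 1$) and $q$ has no zeros there, $q(T)$ is invertible and $f(T) = p(T)q(T)^{-1}$ is well defined; likewise $q(U)$ is invertible. As $f$ is holomorphic on a disc of radius strictly larger than $1$, its Taylor partial sums $p_k$ converge to $f$ uniformly on $\overline{\mathbb D}$; von Neumann's inequality then makes $(p_k(T))_k$ norm-Cauchy, and multiplying by $q(T)$ together with $q p_k \to q f = p$ uniformly on $\overline{\mathbb D}$ identifies its limit as $p(T)q(T)^{-1} = f(T)$. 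On the other side, the continuous functional calculus of the normal operator $U$ gives $p_k(U) \to f(U)$ in norm. Letting $k \to \infty$ in $p_k(T) = P_{\mathcal H}p_k(U)|_{\mathcal H}$ yields $f(T) = P_{\mathcal H}f(U)|_{\mathcal H}$.

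There is no deep obstacle here, since the statement is classical, but the two points needing genuine care are: ensuring that the unitary \emph{extension} of $V$ still dilates $T$, which is exactly why the $U$-invariance of $\mathcal K_+$ is recorded above; and, in the last step, reconciling the two descriptions of $f(T)$ (the norm limit of the $p_k(T)$ versus $p(T)q(T)^{-1}$) and justifying $p_k(U) \to f(U)$ through the normal functional calculus. Everything else reduces to the single algebraic identity $T^*T + D_T^2 = I$ and routine norm estimates.
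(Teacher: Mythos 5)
Your argument is correct. The paper itself offers no proof of this statement --- it is quoted as a classical result with a citation to Sz.-Nagy's 1953 paper --- so there is nothing to compare against; what you have written is the standard Sch\"affer construction, and all the steps check out: the identity $T^*T+D_T^2=I$ does make $V$ an isometry on $\bigoplus_{n\ge 0}\mathcal H$ with $P_{\mathcal H}V^n|_{\mathcal H}=T^n$, the range of an isometry is closed so the unitary extension $U$ is well defined and leaves $\mathcal K_+$ invariant, and the passage from polynomials to rational functions is sound because a rational $f$ with poles off $\overline{\mathbb D}$ is holomorphic on a disc of radius $1+\delta$ (the poles form a finite set at positive distance from $\overline{\mathbb D}$), so its Taylor partial sums converge uniformly on $\overline{\mathbb D}$ and the identification of the limit of $p_k(T)$ with $p(T)q(T)^{-1}$ via $q(T)p_k(T)\to p(T)$ is exactly right. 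The only point worth flagging is cosmetic: you should say explicitly that $f(U)$ from the continuous functional calculus of the unitary $U$ coincides with $p(U)q(U)^{-1}$, which is immediate since the calculus is a homomorphism, but it is the bridge between the two sides of the final identity.
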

In operator theoretic language this is called a rational dilation of a contraction (see Section 2 for details). While Sz.-Nagy characterized a contraction as a compression of a unitary operator, von Neumann \cite{von-Neumann} described a contraction as an operator having the closed unit disk $\ov{\D}$ as a spectral set.
\begin{thm}[von Neumann, 1951]
An operator $T$ is a contraction if and only if $\overline{\mathbb
D}$ is a spectral set for $T$.
\end{thm}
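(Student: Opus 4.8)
The plan is to handle the two implications separately, the first being routine and the second being von Neumann's inequality.

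\textbf{The ``if'' direction.} If $\ov{\D}$ is a spectral set for $T$, then by definition $\sigma(T) \subseteq \ov{\D}$ and $\|f(T)\| \le \|f\|_{\infty,\ov{\D}}$ for every rational function $f$ with poles off $\ov{\D}$. Applying this to $f(z) = z$ gives $\|T\| \le \sup_{z \in \ov{\D}}|z| = 1$, so $T$ is a contraction.

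\textbf{The ``only if'' direction.} Assume $\|T\| \le 1$. By the spectral radius formula $\sigma(T) \subseteq \{\, z \in \C : |z| \le \|T\| \,\} \subseteq \ov{\D}$, so it remains to prove $\|f(T)\| \le \|f\|_{\infty,\ov{\D}}$ for every rational $f$ with poles off $\ov{\D}$. I would deduce this from Sz.-Nagy's dilation theorem above: pick a Hilbert space $\mathcal K \supseteq \mathcal H$ and a unitary $U \in \mathcal B(\mathcal K)$ with $f(T) = P_{\mathcal H}\, f(U)|_{\mathcal H}$ for all such $f$. Since compressing an operator cannot increase its norm, $\|f(T)\| \le \|f(U)\|$. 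As $U$ is normal with $\sigma(U) \subseteq \T$ and $f$ is continuous on $\T$, the continuous functional calculus of $U$ is isometric, whence $\|f(U)\| = \sup_{z \in \sigma(U)}|f(z)| \le \sup_{z \in \T}|f(z)|$. Finally $f$ is holomorphic on an open set containing $\ov{\D}$, so by the maximum modulus principle $\sup_{z \in \T}|f(z)| = \|f\|_{\infty,\ov{\D}}$. Chaining these three estimates yields $\|f(T)\| \le \|f\|_{\infty,\ov{\D}}$, and therefore $\ov{\D}$ is a spectral set for $T$.

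\textbf{Where the difficulty lies.} Phrased this way, all the content of the theorem sits in Sz.-Nagy's dilation, so once that is granted there is no real obstacle; the remaining steps are the elementary norm inequalities used above. If one instead wanted a proof of von Neumann's inequality that does not invoke a unitary dilation, the hard part would be precisely the analysis that the dilation hides. After the harmless reduction to a strict contraction (replace $T$ by $rT$, $0<r<1$, and let $r \to 1^-$ using $p(rT) \to p(T)$ in norm), one would need, for example, the M\"obius estimate $\|\varphi_a(T)\| \le 1$ for $\varphi_a(z) = (a-z)(1-\bar a z)^{-1}$ with $a \in \D$ --- which reduces to the operator inequality $(1-|a|^2)(I - T^*T) \ge 0$ --- together with a Blaschke-product factorization of $p$ and a Fej\'{e}r--Riesz or reproducing-kernel positivity argument to handle the remaining non-Blaschke factor without circularity. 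It is this circle of ideas that the unitary dilation renders unnecessary.
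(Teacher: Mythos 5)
The paper does not prove this statement at all --- it is quoted as a classical result with a citation to von Neumann's 1951 paper --- so there is nothing to compare against line by line; the question is only whether your argument is sound, and it is. The ``if'' direction via $f(z)=z$ is exactly right, and the ``only if'' direction is the standard modern derivation of von Neumann's inequality from Sz.-Nagy's dilation theorem: compression does not increase norm, the continuous functional calculus of the unitary $U$ is isometric on $C(\sigma(U))$ (and agrees with the rational calculus $p(U)q(U)^{-1}$ since $q$ is nonvanishing on $\sigma(U)\subseteq\T$), and the maximum modulus principle moves the supremum from $\T$ to $\ov{\D}$. There is no circularity in invoking the dilation theorem, since its proof (e.g.\ the Sch\"affer construction) does not use von Neumann's inequality; the only caveat is historical rather than logical --- von Neumann's 1951 proof necessarily predated Sz.-Nagy's 1953 theorem and proceeded along the dilation-free lines you sketch in your final paragraph (M\"obius maps, reduction to Blaschke-type factors), which is an accurate description of where the analytic content would otherwise sit. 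One small point worth making explicit: $q(T)^{-1}$ exists because $q$ has no zeros on $\ov{\D}\supseteq\sigma(T)$ and the spectral mapping theorem gives $0\notin\sigma(q(T))$.
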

Needless to mention that a unitary is a normal operator having its spectrum on $\T$. So, Sz.-Nagy's theorem roughly states that an operator that lives inside $\ov{\D}$ can be dilated to a normal operator lying on the boundary of $\ov{\D}$. These compelling results insist numerous mathematicians to study operators having a compact set as a spectral set and their normal dilation to the boundary, e.g. see classic \cite{NagyFoias, Paulsen} and the references therein. The notion of spectral set for an operator was generalized later by Arveson (see \cite{ArvesonI}) for a finite tuple of commuting operators and consequently we witness interesting interplay between structural properties of an operator tuple and the complex geometry of an underlying compact subset of $\mathbb C^m$ associated with the tuple (e.g. \cite{ AgMcC, AM05, pal-shalit}). Note that in multivariable setting the distinguished boundary of a compact set $X$ is considered in place of the topological boundary (see Section 2 for details). After the discovery of unitary dilation of a contraction, we are left with the following problem which remains open till date: if a compact set $X \subset \C^m$ is a spectral set for a commuting operator tuple $(T_1, \dots , T_m)$ on a Hilbert space $\HS$, then whether or not one can find a commuting normal operator tuple $(N_1, \dots , N_m)$ acting on a Hilbert space $\mathcal K \supseteq \HS$ and having its joint spectrum on the distinguished boundary of $X$ such that
\[
f(T_1, \dots , T_m) = P_{\HS} f(N_1, \dots , N_m)|_{\HS}
\] 
for every rational function $f=p \slash q$, where $p, q \in \C[z_1, \dots , z_m]$ with $q$ having no zeros in $X$. Apart from the unit disk $\D$, this rational dilation problem has affirmative answers for an annulus \cite{Agler}, for the bidisc $\D^2$ \cite{ando} and the symmetrized bidisk $\mathbb G_2$ \cite{ay-jfa, tirtha-sourav}, where
\[
\mathbb G_2=\{ (z_1+z_2,z_1z_2)\,:\, z_1,z_2 \in \D \}.
\]
Also, we have failure of rational dilation on a triply connected domain \cite{DM, ahr}, on tridisk $\D^3$ \cite{Parrott} etc. However, no success of rational dilation is yet known for a domain in more than two variables. Indeed, finding dilation in multivariable setting has always been a challenging and difficult problem. Ever since Agler profoundly established the success of rational dilation on an annulus, a canonical question was triggered: what happens with a polyannulus ? The aim of this article is to make a first step towards this problem by finding normal boundary dilation for a commuting tuple of normal operators associated with a closed annulus.\\ 

For $0<r<1$, consider the annulus $\mathbb A_r=\{z\in \C \,: r<|z|<1 \}$. Thus, an $m$-\textit{annulus} or simply a \textit{polyannulus} is the following domain:
\[
\Arm = \{ (z_1, \dots , z_m)\in \C^m\,:\, z_i \in \Ar\,, 1\leq i \leq m \}.
\]
Evidently the topological boundary $\partial \Ar$ of $\Ar$ is a union of $\T$ and $r\T$. We shall prove in Section \ref{basic} that the distinguished boundary of $\Arm$ is equal to $(\partial \Ar)^m$, which is analogous to $\T^m$ being the distinguished boundary of the polydisc $\D^m$. After Agler's famous article \cite{Agler}, there have been quite a few remarkable works on function and operator theory on an annulus, e.g. \cite{Ab1, Ab2, Ball, Dmitry, P-SM, N-S1, Sarason, Tsikalas}.

\begin{defn}
For $m\geq 1$, a commuting tuple of operators $(T_1, \dots , T_m)$ is said to be an $\Arm$-\textit{contraction} if $\ov{\mathbb A}_r^m$ is a spectral set for $(T_1, \dots , T_m)$. An $\Arm$-contraction $(T_1, \dots , T_m)$ is called \textit{normal} if each $T_i$ is a normal operator. Also, a commuting normal tuple of operators $(U_1, \dots , U_m)$ is said to be an $\Arm$-\textit{unitary} if the Taylor joint spectrum $\sigma_T(U_1, \dots , U_m)$ lies on the distinguished boundary $(\partial \Ar)^m$ of $\Arm$.
\end{defn}

It follows from the properties of Taylor joint spectrum that if $N_1, \dots , N_m$ are commuting normal operators with $\ov{\mathbb A}_r$ being a spectral set for each $N_i$, then $\ov{\mathbb A}_r^m$ is a spectral set for the tuple $(N_1, \dots , N_m)$ and vice-versa. To be more specific $(U_1, \dots , U_m)$ is an $\Arm$-unitary if and only if each $U_i$ is an $\Ar$-unitary. Thus, finding a simultaneous $\Ar$-unitary dilation for commuting normal $\Ar$-contractions $N_1, \dots , N_m$ is equivalent to seeking an $\Arm$-unitary dilation for the normal $\Arm$-contraction $(N_1, \dots , N_m)$. In Theorem \ref{main}, the main result of this paper, we show that every normal $\Arm$-contraction admits an $\Arm$-unitary dilation. In Theorem \ref{subnormal}, we make a refinement of this result by showing that every subnormal $\Arm$-contraction possesses an $\Arm$-unitary dilation. A next step to dilating commuting normal $\Ar$-contractions $N_1, \dots , N_m$ is to find simultaneous $\Ar$-unitary dilation for doubly commuting $\Ar$-contractions. A commuting tuple of operators $(A_1, \dots , A_m)$ is said to be \textit{doubly commuting} if $A_iA_j^*=A_j^*A_i$ for $1\leq i,j \leq m$ with $i \neq j$. In Theorem \ref{subnormal1}, we show that any doubly commuting subnormal $\Ar$-contractions $S_1, \dots , S_m$ can have simultaneous commuting $\Ar$-unitary dilation. Also, in Theorem \ref{DC_2} we show that any $m$ number of doubly commuting $2\times 2$ scalar $\Ar$-contractions simultaneously dilate to commuting $\Ar$-unitaries.\\

In Theorem \ref{Dirichlet}, we solve a Dirichlet problem for the polyannulus $\Arm$ which actually plays the central role in the proof of the main dilation theorem (Theorem \ref{main}). Indeed, for a normal $\Arm$-contraction $(N_1, \dots , N_m)$, we have by spectral theorem a spectral measure which is supported at $\ov{\mathbb A}_r^m$. Now if we want to dilate such a tuple to an $\Arm$-unitary, then we first need to construct a spectral measure that is supported at the distinguished boundary $b\Arm$. The Dirichlet problem for the polyannulus (i.e. Theorem \ref{Dirichlet}) precisely solves the purpose.

In Section \ref{Analytic}, we prove a few results associated with rational functional calculus for the polyannulus $\Arm$. These results are used in sequel.

\vspace{0.3cm}

	\section{Preliminaries}\label{basic}
	
\vspace{0.3cm}

\noindent In this Section, we recall from the literature a few basic facts and some relevant existing results which will be used in sequel. We begin with the definition of the Taylor joint sectrum of a tuple of commuting operators.

	\subsection{The Taylor joint spectrum}  Let $\Gamma$ be the exterior algebra on $n$ generators $e_1, e_2, \dotsc, e_n$ with identity $e_0 \equiv 1. \Lambda$ is the algebra of the forms $e_1, e_2, \dotsc, e_n$ with complex coefficients, subject to the property $e_ie_j+e_je_1=0 (1 \leq i,j \leq n).$ Let $E_i: \Lambda \to \Lambda$ denote the operator, given by $E_i\xi=\xi (\xi \in \Lambda, 1 \leq i,j \leq n).$ If we declare $\{e_{i_1}, \dotsc, e_{i_k} : \ 1 \leq i_1 < \dotsc < i_k \leq n \}$ to be an orthogonal basis, the exterior algebra $\Gamma$ becomes a Hilbert space, admitting an orthogonal decomposition $\Lambda=\overset{n}{\underset{k=1}{\oplus}} \Lambda^k$ where dim$\Lambda^k=\begin{pmatrix}
		n\\ k\\
	\end{pmatrix}.$ Thus, each $\xi \in \Lambda$ admits a unique orthogonal decomposition  $\xi=e_1\xi'+\xi''$ have no $e_i$ contribution. It then follows that $E_i^*\xi=\xi',$ and we have that each $E_i$ is a partial isometry, satisying $E_i^*E_j+E_j^*E_i=\delta_{ij}.$ Let $\mathscr{X}$ be a normed space and $\underline{T}=(T_1, \dotsc, T_n)$ be a commuting $n$-tuple of bounded operators on $\mathscr{X}$ and set $\Lambda(\mathscr{X})=\mathscr{X}\otimes_\mathbb{C}\Lambda.$ We define $D_{\underline{T}}: \Lambda(\mathscr{X}) \to \Lambda(\mathscr{X})$ by
	\begin{center}
		$D_{\underline{T}}=\overset{n}{\underset{i=1}{\sum}}T_i \otimes E_i.$
	\end{center}
	Then it is easy to see that $D_{\underline{T}}^2=0,$ so $RanD_{\underline{T}} \subset KerD_{\underline{T}}.$ The commuting $n$-tuple is said to be \textit{non-singular} on $\mathscr{X}$ if $RanD_{\underline{T}} = KerD_{\underline{T}}. $
	
	\begin{defn}
		The Taylor joint spectrum of $\underline{T}$ on $\mathscr{X}$ is the set 
		\begin{center}
			$\sigma_T(\underline{T}, \mathscr{X})=\{\lambda=(\lambda_1, \dotsc, \lambda_n) : \ \underline{T}-\lambda \ \mbox{is singular}\}.$
		\end{center}
		$ $
		For a further reading on Taylor joint spectrum, reader is referred to Taylor's works \cite{TaylorI} and \cite{TaylorII}.
	\end{defn}
	\subsection{The distinguished boundary} For a compact subset $X$ of $\C^m$, let $\mathscr{A}(X)$ be the algebra of continuous complex-valued funtions on $X$ that are holomorphic in the interior of $X$. A \textit{boundary} for $X$ is a closed subset
	$C$ of $X$ such that every function in $\mathscr{A}(X)$ attains its maximum modulus on $C$. It follows from the theory of uniform algebras that the intersection of all the boundaries of $X$ is also a boundary of $X$ and it is the smallest among all boundaries. This is called the \textit{distinguished boundary} of $X$ and is denoted by $bX$. For a bounded domain $G \subset \C^m$, we denote by $bG$ the distinguished boundary of $\ov{G}$ and for the sake of simplicity we call it the distinguished boundary of $G$.
	
	\subsection{Spectral set, complete spectral set, rational dilation and Arveson's theorem}
	Let $X$ be a compact subset of $\mathbb{C}^m$ and $Rat(X)$ be the algebra of rational functions $p\slash q$, where $p,q \in \C[z_1, \dots , z_m]$ such that $q$ does not have any zeros in $X$. Let $\underline{T}=(T_1, \dotsc, T_m)$ be a commuting tuple of operators acting on a Hilbert space $\mathcal{H}$. Then $X$ is said to be a \textit{spectral set} for $\underline{T}$ if the Taylor joint spectrum of $\underline{T}$ is contained in $X$ and von Neumann's inequality holds for any $f \in Rat(X)$, i.e. 
	\[
		\|f(\underline{T})\| \leq \sup_{\xi \in X} \;|f(\xi)|=\|f\|_{\infty, \, X},
	\]
	where $f(\underline{T})=p(\underline{T})q(\underline{T})^{-1}$ when $f=p \slash q$. Also, $X$ is said to be a \textit{complete spectral set} if for any $F=[f_{ij}]_{n\times n}$, where each $f_{ij}\in Rat\,(X)$, we have
	\[
		\|f(\underline{T})\|=\|\,[f_{ij}(\underline{T})]_{n\times n} \,\| \leq \sup_{\xi \in X} \;\| \, [f_{ij}(\xi)]_{n\times n} \,\|.
	\]
	
	\noindent A commuting $m$-tuple of operators $\underline{T}$ having $X$ as a spectral set, is said to have a \textit{rational dilation} or \textit{normal b$X$-dilation} if there exist a Hilbert space $\mathcal{K}$, an isometry $V: \mathcal{H} \to \mathcal{K}$ and a commuting $m$-tuple of normal operators $\underline{N}=(N_1, \dots , N_m)$ on	$\mathcal{K}$ with $\sigma_T(\underline{N}) \subseteq bX$ such that 
	\begin{center}
		$f(\underline{T})=V^*f(\underline{N})V \ \ \ \text{ for all }\; f \in Rat(X)$ \,.
	\end{center}
In other words, $f(\underline{T})=P_\mathcal{H}f(\underline{N})|\mathcal{H}$ for every $f \in Rat(X)$ when $\mathcal{H}$ is realized as a closed subspace of $\mathcal{K}$.

The following celebrated theorem due to Arveson combines these two concepts.

\begin{thm} [Arveson, \cite{ArvesonI}] \label{thm:W.Arveson}
 A commuting operator tuple $\underline{T}$ admits a normal $bG$-dilation if and only if $\ov{G}$ is a complete spectral set for $\underline{T}$.
 \end{thm}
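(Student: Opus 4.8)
The plan is to recognise this as Arveson's dilation theorem and to prove it through the machinery of completely positive and completely contractive maps on function algebras, handling the two implications separately.

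For the forward (``easy'') implication I would argue directly. Suppose $\underline{T}$ on $\HS$ admits a normal $bG$-dilation: a Hilbert space $\mathcal{K}\supseteq\HS$ and a commuting normal tuple $\underline{N}=(N_1,\dots,N_m)$ on $\mathcal{K}$ with $\sigma_T(\underline{N})\subseteq bG$ and $f(\underline{T})=P_{\HS}f(\underline{N})|_{\HS}$ for all $f\in Rat(\ov{G})$. For a matrix $F=[f_{ij}]$ over $Rat(\ov{G})$, the joint spectral measure $E$ of $\underline{N}$ (supported on $\sigma_T(\underline{N})$) gives $[f_{ij}(\underline{N})]=\int [f_{ij}(\zeta)]\,d(E\otimes I)(\zeta)$, so
\[
\|[f_{ij}(\underline{T})]\| \;\le\; \|[f_{ij}(\underline{N})]\| \;\le\; \sup_{\zeta\in\sigma_T(\underline{N})}\|[f_{ij}(\zeta)]\| \;\le\; \sup_{\zeta\in bG}\|[f_{ij}(\zeta)]\| \;=\; \|F\|_{\infty,\,\ov{G}},
\]
using that compression to $\HS\otimes\C^n$ does not increase the norm and that $bG$ realises the matrix-valued maximum modulus over $\ov{G}$; together with the resulting containment $\sigma_T(\underline{T})\subseteq\ov{G}$, this says $\ov{G}$ is a complete spectral set for $\underline{T}$.

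For the converse I would pass to operator-algebra language. Restriction to $bG$ embeds $Rat(\ov{G})$ as a unital subalgebra of $C(bG)$, and it is a \emph{complete} isometry onto its image by the matrix-valued maximum-modulus principle again; the hypothesis then says exactly that the rational functional calculus $\rho:Rat(\ov{G})\to\mathcal{B}(\HS)$, $\rho(f)=f(\underline{T})$, is a unital completely contractive homomorphism of this operator algebra. I would extend $\rho$ to the operator system generated by $Rat(\ov{G})$ in $C(bG)$ by setting $\rho(f+\overline{g}):=\rho(f)+\rho(g)^{*}$ — this is unital and completely contractive, hence completely positive — and then invoke Arveson's extension theorem to obtain a unital completely positive $\widetilde{\rho}:C(bG)\to\mathcal{B}(\HS)$. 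Stinespring's theorem yields a Hilbert space $\mathcal{K}\supseteq\HS$ and a unital $*$-representation $\pi:C(bG)\to\mathcal{B}(\mathcal{K})$ with $\widetilde{\rho}(\cdot)=P_{\HS}\pi(\cdot)|_{\HS}$. Putting $N_i:=\pi(z_i)$: since $C(bG)$ is commutative, $\pi$ is a spectral-measure representation supported on $bG$, so $\underline{N}=(N_1,\dots,N_m)$ is a commuting normal tuple with $\sigma_T(\underline{N})\subseteq bG$; and since $\pi$ is multiplicative and any denominator nonvanishing on $bG$ is invertible in $C(bG)$, one gets $f(\underline{N})=\pi(f)$ for every $f\in Rat(\ov{G})$, whence $f(\underline{T})=\rho(f)=\widetilde{\rho}(f)=P_{\HS}\pi(f)|_{\HS}=P_{\HS}f(\underline{N})|_{\HS}$. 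Thus $\underline{N}$ is the desired normal $bG$-dilation.

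The main obstacle lies inside the converse: verifying that $\rho(f+\overline{g}):=\rho(f)+\rho(g)^{*}$ is well-defined and completely positive on that operator system (which is where the \emph{homomorphism} property of $\rho$, not merely its linearity, enters), and then confirming that the Stinespring dilation of $\widetilde{\rho}$ is a genuine dilation of the functional calculus — equivalently, that $\HS$ is semi-invariant for $\pi(Rat(\ov{G}))$, which by Sarason's lemma reduces to the identity $P_{\HS}\pi(f)P_{\HS}\pi(g)|_{\HS}=\rho(f)\rho(g)=\rho(fg)=P_{\HS}\pi(fg)|_{\HS}$ for $f,g\in Rat(\ov{G})$. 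A second, easily overlooked ingredient — used in both directions — is the matrix-valued maximum-modulus principle identifying $\|F\|_{\infty,\ov{G}}$ with $\sup_{\zeta\in bG}\|F(\zeta)\|$ for matrices $F$ over $\mathscr{A}(\ov{G})$, which I would derive by interchanging the supremum over pairs of unit vectors with the supremum over $\ov{G}$ and applying the scalar maximum-modulus principle to the resulting functions in $\mathscr{A}(\ov{G})$.
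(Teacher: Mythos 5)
The paper does not prove this statement: it is quoted verbatim from Arveson \cite{ArvesonI} as a known theorem, so there is no internal argument to compare yours against. Your outline is the standard proof of Arveson's dilation theorem (essentially the one in Chapter 7 of \cite{Paulsen}): the forward direction by compressing the continuous functional calculus of the normal tuple and using that $bG$ is a boundary for matrix-valued functions as well, and the converse by viewing $f\mapsto f(\underline{T})$ as a unital completely contractive homomorphism of $Rat(\ov{G})\subset C(bG)$, extending it to a unital completely positive map on $C(bG)$ via the operator system $Rat(\ov{G})+\overline{Rat(\ov{G})}$ and Arveson's extension theorem, and then applying Stinespring; the two technical points you single out (complete positivity of the self-adjoint extension on the operator system, and the matrix-valued maximum modulus principle) are exactly where the substance lies, and both are handled correctly. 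The argument is sound; it simply is not something the paper itself establishes.
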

	\subsection{Operator-valued measure and Naimark Dilation Theorem}
	Let $X$ be a compact Hausdorff space and $\mathcal{B}$ be the $\sigma$-algebra of Borel sets in $X$.	An operator-valued measure on $X$ is a map $E: \mathcal{B} \rightarrow \mathcal B(\mathcal{H})$ which is weakly countably additive, i.e. if $\left\{B_{i}\right\}$ is a countable collection of disjoint Borel sets whose union is $B$, then
		\[
		\langle E(B) x, y\rangle=\sum_{i}\left\langle E\left(B_{i}\right) x, y\right\rangle \quad \text{ for all } x, y \in \mathcal{H}.
		\]
	 The measure is called \textit{bounded} if
		$
		\sup \{\|E(B)\|: B \in \mathcal{B}\}<\infty.
		$  
		Also, the measure is said to be \textit{regular} if the induced complex measure
		\begin{equation} \label{regular}
			\mu_{x, y}(B)=\langle E(B) x, y\rangle \,, \quad \text{ for all } x,y \in \HS
		\end{equation}
		is regular. Given a regular bounded $\mathcal B(\mathcal{H})$-valued measure $E$, one obtains a bounded linear map $\phi_{E}: C(X) \rightarrow \mathcal B(\mathcal H)$ defined by
	\begin{equation} \label{meausre_linear_map}
	\left\langle\phi_{E}(f) x, y\right\rangle=\underset{X}{\int} f d \mu_{x, y}. 
	\end{equation}
	Conversely, for a bounded linear map $\phi: C(X) \rightarrow \mathcal B(\mathcal{H})$, one can define using (\ref{meausre_linear_map}) a regular Borel measures $\{\mu_{x, y}\}$ for each pair $x,y$ in $\mathcal{H}$. Again, for each Borel set $B$, there is a unique bounded operator $E(B)$, given by $(\ref{regular})$ such that $B \mapsto E(B)$ defines a bounded regular $\mathcal B(\mathcal{H})$-valued measure. Therefore, there is a one-to-one correspondence between the bounded linear maps of $C(X)$ into $\mathcal B(\mathcal{H})$ and the regular bounded $\mathcal B(\mathcal{H})$-valued measures.	
		
A regular $\mathcal B(\mathcal{H})$-valued measure $E$ on $X$ is called a \textit{spectral measure} if for any Borel subsets $S_1, S_2$ of $X$ we have
		\[
		E(S_1 \cap S_2)=E(S_1)E(S_2).
		\]
	Moreover, $E$ is called \textit{positive} if $E(S) \geq 0$ for every Borel set $S$ in  $X$. It is well-known that there is a one-to-one correspondence between the operator measures and linear maps. This leads to the following famous dilation result due to Naimark.
	
	\begin{thm}[\cite{Paulsen}, Theorem 4.6]\label{Naimark}
		Let $E$ be a regular, positive, $\mathcal B(\mathcal{H})$-valued measure on $X.$ Then there exist a Hilbert space $\mathcal{K},$ a bounded linear operator $V:\mathcal{H} \to \mathcal{K}$ and a regular spectral $\mathcal B(\mathcal{K})$-valued measure $F$ on $X$ such that 
		\[
		E(B)=V^*F(B)V.
		\]
		Moreover, if the map $\phi_E$ is unital then $V$ is an isometry.
	\end{thm}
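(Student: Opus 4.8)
The plan is to construct the dilation by hand, through a Kolmogorov--type (equivalently, GNS--type) construction performed directly on $\mathcal H$-valued simple functions, using only the positivity and boundedness of $E$. One could instead route through the bijection $E\leftrightarrow\phi_E$ of \eqref{meausre_linear_map}: since $C(X)$ is abelian the positive map $\phi_E$ is automatically completely positive, so Stinespring's dilation theorem yields a Hilbert space $\mathcal K$, a $*$-representation $\pi\colon C(X)\to\mathcal B(\mathcal K)$ and a bounded $V\colon\mathcal H\to\mathcal K$ with $\phi_E(f)=V^*\pi(f)V$ and $\|V\|^2=\|\phi_E(1)\|$; the spectral theorem for representations of $C(X)$ then presents $\pi$ as $\pi(f)=\int_X f\,dF$ for a regular spectral measure $F$, and testing the scalar measures $\langle E(\,\cdot\,)x,y\rangle$ and $\langle F(\,\cdot\,)Vx,Vy\rangle$ against all $f\in C(X)$ forces $E=V^*FV$ by uniqueness in the Riesz representation theorem. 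I describe the direct route below, as it stays internal to the measure.

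First I would let $\mathcal S$ be the vector space of $\mathcal H$-valued simple functions $\sigma=\sum_{i=1}^{k}\chi_{B_i}\otimes x_i$ ($B_i\in\mathcal B$, $x_i\in\mathcal H$) and equip it with the sesquilinear form
\[
\langle\sigma,\tau\rangle=\sum_{i,j}\langle E(B_i\cap C_j)x_i,y_j\rangle,\qquad \tau=\sum_{j}\chi_{C_j}\otimes y_j.
\]
Passing to a common refinement of the underlying partitions (and using finite additivity of $E$) shows that this value is well defined, independent of the chosen representations; taking $\tau=\sigma$ and refining so that the $B_i$ are pairwise disjoint gives $\langle\sigma,\sigma\rangle=\sum_i\langle E(B_i)x_i,x_i\rangle\ge 0$ since each $E(B_i)\ge 0$. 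Hence the form is positive semi-definite; I would quotient $\mathcal S$ by its null space $\mathcal N$ and complete to obtain a Hilbert space $\mathcal K$, writing $[\sigma]$ for the image of $\sigma$. Define $V\colon\mathcal H\to\mathcal K$ by $Vx=[\chi_X\otimes x]$. Then $\langle Vx,Vy\rangle_{\mathcal K}=\langle E(X)x,y\rangle_{\mathcal H}$, so $\|V\|^2\le\|E(X)\|<\infty$, and $V$ is an isometry exactly when $E(X)=I$, i.e.\ when $\phi_E$ is unital.

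Next I would define, for each $B\in\mathcal B$, a linear map on $\mathcal S$ by $\chi_C\otimes x\mapsto\chi_{B\cap C}\otimes x$. A direct computation with the form shows that this map is symmetric and idempotent; the Cauchy--Schwarz inequality for the semi-inner product then shows it annihilates $\mathcal N$, passes to a contraction $F(B)$ on $\mathcal K$, and that $F(B)$ is a self-adjoint idempotent, hence an orthogonal projection, so $\sup_B\|F(B)\|\le 1$. The identity $F(B_1)F(B_2)[\chi_C\otimes x]=[\chi_{B_1\cap B_2\cap C}\otimes x]=F(B_1\cap B_2)[\chi_C\otimes x]$ gives multiplicativity, $F(X)=I_{\mathcal K}$ is immediate, weak countable additivity of $F$ follows from that of $E$ together with $\sup_B\|F(B)\|\le 1$ and the density of the image of $\mathcal S$ in $\mathcal K$, and regularity of $F$ is inherited from that of the measures $\mu_{x,y}$ of \eqref{regular}. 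Finally, for any $B\in\mathcal B$ and $x,y\in\mathcal H$,
\[
\langle V^*F(B)Vx,y\rangle=\langle F(B)[\chi_X\otimes x],[\chi_X\otimes y]\rangle=\langle[\chi_B\otimes x],[\chi_X\otimes y]\rangle=\langle E(B)x,y\rangle,
\]
so $E(B)=V^*F(B)V$, which together with the isometry remark completes the argument.

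The step I expect to demand the most care is upgrading the finitely-additive bookkeeping on simple functions to a genuine \emph{regular spectral} measure on all of $\mathcal B$ after passing to the completion $\mathcal K$ --- that is, verifying weak countable additivity and regularity of $B\mapsto F(B)$ and checking that $F(B)$ is well defined on the quotient $\mathcal S/\mathcal N$, which is precisely where positivity of $E$ and Cauchy--Schwarz for the semi-inner product are used. By contrast, once $F(B)$ is recognised as an orthogonal projection, its boundedness, self-adjointness and multiplicativity are essentially automatic, and both the dilation identity $E=V^*FV$ and the ``unital implies isometry'' assertion are immediate.
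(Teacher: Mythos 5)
The paper gives no proof of this statement; it is quoted from Paulsen's book (Theorem 4.6 there), where the argument runs exactly along the first route you sketch: positivity of $\phi_E$ on the abelian $C^*$-algebra $C(X)$ gives complete positivity, Stinespring's theorem produces $(\pi,V,\mathcal K)$ with $\phi_E(f)=V^*\pi(f)V$, the unital $*$-representation $\pi$ of $C(X)$ is realised by a regular spectral measure $F$, and uniqueness in the Riesz representation theorem transfers the identity from continuous functions to Borel sets. Your main argument --- the Kolmogorov/GNS construction on $\mathcal H$-valued simple functions --- is correct and is a genuinely different presentation: it builds $\mathcal K$, $V$ and $F$ directly out of $E$ without invoking Stinespring or the representation theory of $C(X)$, at the cost of verifying by hand that $F$ is weakly countably additive and regular. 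You handle the delicate points properly: positivity of $E$ enters exactly where you say it does (positive semi-definiteness of the form after refining to disjoint sets, and Cauchy--Schwarz to see that multiplication by $\chi_B$ kills the null space and descends to a contraction on the quotient); the partial sums $F\bigl(\cup_{i\le n}B_i\bigr)$ are projections, hence uniformly bounded, so weak countable additivity checked on the dense image of $\mathcal S$ extends to all of $\mathcal K$; and regularity passes to the scalar measures $\langle F(\cdot)k,k'\rangle$ as uniform-in-$B$ limits of regular measures. The direct route buys self-containedness and transparency about the role of positivity; the Stinespring route buys brevity by outsourcing those verifications to the spectral theorem for representations of $C(X)$. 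Both versions are complete; I see no gaps.
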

	

	\subsection{Reisz-Kakutani Representation Theorem}
	A central theorem in the classical measure theory is the Reisz-Kakutani representation theorem which relates the linear functionals on the space $C(X)$ with regular Borel measures on $X$, where $X$ is a locally compact Hausdorff space.
	
	\begin{thm} \label{thm:2RK}
		Let $X$ be a compact Hausdorff space. For any continuous linear functional $\psi$ on $C(X),$ there exists a unique regular Borel complex measure $\mu$ on $X$ such that
		\[
		\psi(f)=\underset{X}{\int}f d\mu \ \ (f \in C(X))
		\]
		and $\|\psi\|=\|\mu\|$ which is the total variation of $\mu$. Moreover, $\psi$ is positive if and only if the measure $\mu$ is non-negative. 
	\end{thm}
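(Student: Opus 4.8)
The plan is to reduce the statement to the case of a \emph{positive} linear functional and then carry out the classical outer-measure construction of a Radon measure from it. First I would split an arbitrary continuous linear functional $\psi$ on $C(X)$ as $\psi=\psi_1+i\psi_2$, where $\psi_1(f):=\tfrac12\big(\psi(f)+\overline{\psi(\overline f)}\big)$ and $\psi_2(f):=\tfrac1{2i}\big(\psi(f)-\overline{\psi(\overline f)}\big)$ are \emph{Hermitian}, i.e.\ real on real-valued functions. Each Hermitian functional $\varphi$ then decomposes as $\varphi=\varphi^+-\varphi^-$ with $\varphi^\pm$ positive, obtained by setting $\varphi^+(f):=\sup\{\varphi(g):g\in C(X),\ 0\le g\le f\}$ for $f\ge0$ and extending by linearity — this is finite and additive because $\varphi$ is bounded and $C(X)$ is a vector lattice — and $\varphi^-:=\varphi^+-\varphi$. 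Hence it is enough to show: for every positive linear functional $\Lambda$ on $C(X)$ there is a unique regular positive Borel measure $\mu$ with $\Lambda(f)=\int_X f\,d\mu$ for all $f\in C(X)$ and $\mu(X)=\Lambda(1)=\|\Lambda\|$. Reassembling the four pieces produces the complex measure representing $\psi$; the identity $\|\psi\|=\|\mu\|$ then follows from $|\psi(f)|\le\|f\|_\infty\,|\mu|(X)$ for one inequality and, for the reverse, from the polar decomposition $d\mu=h\,d|\mu|$ combined with Lusin's theorem to approximate $\overline h$ in $L^1(|\mu|)$ by continuous functions of sup-norm at most $1$; and the equivalence ``$\psi$ positive $\Leftrightarrow$ $\mu\ge 0$'' is immediate from the construction in the positive case together with uniqueness.

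For a positive $\Lambda$ I would run the standard argument. Since $X$ is compact Hausdorff it is normal, so Urysohn's lemma and finite partitions of unity subordinate to open covers are available. Define, for open $U\subseteq X$,
\[
\mu(U):=\sup\{\Lambda(f):f\in C(X),\ 0\le f\le 1,\ \mathrm{supp}\,f\subseteq U\},
\]
and for arbitrary $E\subseteq X$ put $\mu^\ast(E):=\inf\{\mu(U):E\subseteq U,\ U\ \text{open}\}$. The verification has the usual three steps: (i) $\mu^\ast$ is an outer measure, the key point being countable subadditivity on open sets, which uses that any $f$ with $\mathrm{supp}\,f\subseteq\bigcup_n U_n$ has compact support covered by finitely many $U_n$ together with a subordinate partition of unity; (ii) every Borel set is $\mu^\ast$-measurable in the sense of Carath\'{e}odory, so $\mu:=\mu^\ast|_{\mathcal B(X)}$ is a Borel measure, outer regular by construction and — using its finiteness — inner regular as well; (iii) $\mu(X)=\Lambda(1)<\infty$.

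The last and most delicate step is the identity $\Lambda(f)=\int_X f\,d\mu$ for all $f\in C(X)$. By linearity and $f=f^+-f^-$ it suffices to treat $0\le f\le 1$; one then partitions the range, putting $E_j:=\{t:(j-1)/n<f(t)\le j/n\}$ and $f_j:=\min\{\max\{f-(j-1)/n,0\},1/n\}$, sandwiches each $f_j$ between Urysohn functions adapted to open sets slightly larger than $E_j$, applies $\Lambda$ and integrates against $\mu$, sums over $j$, and lets $n\to\infty$, the error being controlled by outer regularity. Uniqueness is then automatic: two regular Borel measures inducing the same positive functional assign the same value to every open $U$ via the displayed supremum, hence agree on open sets and, by outer regularity, everywhere. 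I expect the genuine obstacle to be precisely steps (i)--(iii): checking that the Carath\'{e}odory construction yields a \emph{countably additive, regular} Borel measure is where compactness of $X$ and the coupling between $\Lambda$ and supports of test functions must be used with care, while the remaining parts are lattice-theoretic bookkeeping or a routine $\varepsilon/n$ estimate.
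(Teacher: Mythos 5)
This statement is the classical Riesz--Markov--Kakutani representation theorem, which the paper merely quotes as a preliminary (with reference to standard texts) and does not prove; your outline is the standard textbook argument --- Hermitian and Jordan decomposition to reduce to a positive functional, the Carath\'eodory outer-measure construction, the $\varepsilon/n$ slicing argument for $\Lambda(f)=\int_X f\,d\mu$, and Lusin's theorem for $\|\psi\|=\|\mu\|$ --- and it is correct. There is nothing in the paper to compare it against beyond this.
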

	
	\vspace{0.3cm}

	\section{The distinguished boundary of the polyannulus $\Arm$ and the polyannulus-unitaries} \label{sec:04}

\vspace{0.4cm}	
	
	 \noindent In this Section, we determine the distinguished boundary of polyannulus $\mathbb{A}_r^m$ and find a characterization for a polyannulus-unitary, i.e. an $\mathbb A_r^m$-unitary. The distinguished boundary of the polydisc $\mathbb D^m$ is the $m$-torus $\T^m$, which is nothing but the Cartesian product of the (topological) boundary of the disc $\D$. We shall see here that an analogue holds for the polyannulus $\Arm$. Indeed, the distinguished boundary of $\Arm$ is $(\partial \Ar)^m$. We will also have an operator theoretic version of this result, i.e. a commuting tuple of operators $(U_1, \dots , U_m)$ is an $\Arm$-unitary if and only if each $U_i$ is an $\Ar$-unitary.

	  If there exist a function $f \in \mathscr{A}(\mathbb{A}_r^m)$ and a point $a \in \overline{\mathbb{A}}_r^m$ such that $f(a)=1$ and $|f(z)|<1$ for all $z \in \overline{\mathbb{A}}_r^m\setminus\{a\},$ then $a$ must belong to $b{\mathbb{A}_r^m}.$ We call such a point $a$ a \textit{peak point} of $\overline{\mathbb{A}}_r^m$ and the function $f$ is called a \textit{peaking function} for the point $a.$ Here we show that $b\Arm$ actually consists of all peak points of $\ov{\Ar}^m$.

	\begin{thm}\label{distt}
		For $a=(a_1,  \dotsc, a_m) \in \mathbb{C}^m,$ the following are equivalent.
		\begin{enumerate}
			\item $a$ is a peak point of $\overline{\mathbb{A}}_r^m;$
			\item $a_k \in \partial{\mathbb{A}_r}$ for all $k=1,2,\dotsc,m;$
			\item $a \in b{\mathbb{A}_r^m},$ the distinguished boundary of $\mathbb{A}_r^m.$
		\end{enumerate}
		Consequently, we have that $b{\mathbb{A}_r^m}= 
		\underbrace{\partial \mathbb{A}_r \times \dotsc \times \partial\mathbb{A}_r}_{m-times} .$	
	\end{thm}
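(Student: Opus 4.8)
The plan is to establish the cycle $(2)\Rightarrow(1)\Rightarrow(3)\Rightarrow(2)$. The implication $(1)\Rightarrow(3)$ is precisely the observation recorded just before the statement: a peaking function $f$ for $a$ attains its maximum modulus over $\ov{\Ar}^m$ only at $a$, so $a$ must lie in every boundary of $\ov{\Ar}^m$ and hence in their intersection $b\Arm$. Granting the three implications, the theorem follows, and the final display comes for free: $(1)\Leftrightarrow(2)$ identifies the set of peak points of $\ov{\Ar}^m$ with $(\partial\Ar)^m$, while $(1)\Leftrightarrow(3)$ identifies it with $b\Arm$.

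For $(2)\Rightarrow(1)$ I would exhibit an explicit peaking function. For each $k$, since $a_k\in\partial\Ar$ we have $|a_k|=1$ or $|a_k|=r$; put
\[
\phi_k(z)=\tfrac12\Bigl(1+\tfrac{z}{a_k}\Bigr)\ \text{ if }|a_k|=1,\qquad \phi_k(z)=\tfrac12\Bigl(1+\tfrac{a_k}{z}\Bigr)\ \text{ if }|a_k|=r.
\]
Each $\phi_k$ is a rational function with no pole in $\ov{\Ar}$ (note $0\notin\ov{\Ar}$), so $\phi_k\in\mathscr{A}(\ov{\Ar})$; clearly $\phi_k(a_k)=1$, and since $|z/a_k|\le1$ (resp. $|a_k/z|\le1$) on $\ov{\Ar}$, the triangle inequality gives $|\phi_k(z)|<1$ for $z\in\ov{\Ar}\setminus\{a_k\}$. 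Then $f(z_1,\dots,z_m)=\frac1m\sum_{k=1}^m\phi_k(z_k)\in\mathscr{A}(\ov{\Ar}^m)$, $f(a)=1$, and $|f(z)|=1$ forces $|\phi_k(z_k)|=1$ for every $k$, i.e. $z=a$; hence $a$ is a peak point.

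The substantive step is $(3)\Rightarrow(2)$, and for it I would show that $C:=(\partial\Ar)^m$ is itself a boundary of $\ov{\Ar}^m$. Since $b\Arm$ is the smallest boundary, this yields $b\Arm\subseteq C$, so a point of $b\Arm$ cannot have a coordinate in the open annulus $\Ar$, which is exactly $(3)\Rightarrow(2)$. To see that $C$ is a boundary, fix $f\in\mathscr{A}(\ov{\Ar}^m)$ with $M:=\|f\|_{\infty,\,\ov{\Ar}^m}$ attained at some $p=(p_1,\dots,p_m)$, and peel off one variable at a time. Suppose the first $j-1$ coordinates have been replaced by points $q_1,\dots,q_{j-1}\in\partial\Ar$ without decreasing $|f|$; applying the one-variable maximum modulus principle on the annulus $\Ar$ to $z\mapsto f(q_1,\dots,q_{j-1},z,p_{j+1},\dots,p_m)$ produces $q_j\in\partial\Ar$ with $|f(q_1,\dots,q_j,p_{j+1},\dots,p_m)|\ge M$, hence equal to $M$. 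After $m$ steps $|f|$ equals $M$ at a point of $C$.

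The one point requiring care is the holomorphy of the slices: fixing coordinates $q_1,\dots,q_{j-1}$ on $\partial\Ar$ lands on the topological boundary of $\Arm$, where $f$ is only known to be continuous. I would handle this with a short lemma proved by a Weierstrass-type argument: since $\Ar$ is dense in $\ov{\Ar}$, approximate the fixed coordinates by points of $\Ar$; the resulting slices are holomorphic on $\Ar$, and by uniform continuity of $f$ on the compact set $\ov{\Ar}^m$ they converge uniformly on $\ov{\Ar}$, so the limiting slice is holomorphic on $\Ar$ and continuous on $\ov{\Ar}$. The ordinary maximum modulus principle for the bounded planar domain $\Ar$ then applies at each stage, which is all the iteration above needs.
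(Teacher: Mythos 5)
Your proof is correct, but it takes a genuinely different route from the paper's at the decisive step. The paper proves $(1)\Leftrightarrow(2)$ directly in both directions and then gets $(2)\Leftrightarrow(3)$ by citing a theorem of Dales that the peak points of a Banach function algebra on a compact metrizable space are dense in its Shilov boundary; since the peak-point set $(\partial\Ar)^m$ is already closed, the distinguished boundary is identified immediately. You instead close the cycle $(2)\Rightarrow(1)\Rightarrow(3)\Rightarrow(2)$ and replace the appeal to Dales with a direct, elementary verification that $(\partial\Ar)^m$ is a boundary of $\ov{\Ar}^m$, by iterating the one-variable maximum modulus principle one coordinate at a time --- essentially the classical argument that $\T^m$ is the distinguished boundary of $\D^m$. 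Your peaking functions also differ (an average of affine/M\"obius-type maps $\tfrac12(1+z/a_k)$ or $\tfrac12(1+a_k/z)$ rather than the paper's product of the rational functions $e^{i\theta}/(2e^{i\theta}-z)$ and $re^{i\phi}/(2z-re^{i\phi})$), but both constructions work. What your approach buys is self-containedness: no external uniform-algebra input is needed. It is also worth noting that the slice-holomorphy issue you isolate and resolve with the approximation lemma (a slice of $f\in\mathscr{A}(\ov{\Ar}^m)$ through a point with some coordinates on $\partial\Ar$ is a uniform limit of holomorphic slices, hence holomorphic on $\Ar$ and continuous on $\ov{\Ar}$) is present but glossed over in the paper's own $(1)\Rightarrow(2)$ argument, where $g(z)=f(z,a_2,\dotsc,a_m)$ is asserted to be analytic even though the fixed coordinates $a_2,\dotsc,a_m$ need not lie in $\Ar$; your lemma supplies exactly the justification that step needs. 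What the paper's route buys in exchange is brevity at the $(2)\Leftrightarrow(3)$ stage, at the cost of invoking a nontrivial result from the theory of function algebras.
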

	\begin{proof}
		We shall prove  $(1) \iff (2) \iff (3).$ 
		$ $\\
		\item $(1) \implies (2)$ If  $(a_1, \dotsc, a_m)$ is a peak point for $\mathscr{A}(\mathbb{A}_r^m)$ then there exists $f \in \mathscr{A}(\mathbb{A}_r^m)$ such that
		\[
		f(a_1, \dotsc, a_m)=1 \;\; \& \;\; |f(z_1,\dotsc, z_m)|<1 \;\;
		 \text{ for all } \;\; (z_1,\dotsc, z_m) \in \overline{\mathbb{A}}_r^m \setminus \{ (a_1, \dotsc, a_m)\}.
		\]
		Define $g: \overline{\mathbb{A}}_r \to \mathbb{C}$ by $g(z)=f(z,a_2, \dotsc, a_m)$. Since $f \in \mathscr{A}(\mathbb{A}_r^m)$, $g$ is analytic in $\mathbb{A}_r$ and continuous on $\overline{\mathbb{A}}_r$. Now $g$ attains its maximum modulus at $a_1.$ Thus, by the Maximum principle we have that $a_1 \in \partial\mathbb{A}_r$. Similarly, one can show that  $a_k \in \partial\mathbb{A}_r$ for all $k=1,\dotsc,m$. \\
		\item $(2) \implies (1)$ For $\theta, \phi \in [0, 2\pi],$ the functions 
		$f_\theta, g_\phi: \overline{\mathbb{A}}_r \to \mathbb{C}$ defined by
		\begin{equation*}
			f_\theta(z)=\frac{e^{i\theta}}{2e^{i\theta}-z} \quad \mbox{and} \quad g_\phi(z)=\frac{re^{i\phi}}{2z-re^{i\phi}}
		\end{equation*}
		are analytic in $\mathbb{A}_r$ and continuous on $\overline{\mathbb{A}}_r$. Since $f_\theta(e^{i\theta})=1$, we have that $|z-2e^{i\theta}|>1$ for any $z\ne e^{i\theta}$ in $\overline{\mathbb{A}}_r$ which implies that 
		\begin{equation*}
			|f_\theta(z)|=\frac{1}{|z-2e^{i\theta}|}<1.
					\end{equation*}
		Hence, $e^{i\theta}$ is a peak point of $\overline{\mathbb{A}}_r$. Again, $g_\phi(re^{i\phi})=1$ and for any $z \in \overline{\mathbb{A}}_r \setminus \{re^{i\phi}\}$, we have $|z-{re^{i\phi}}/{2}|> {r}/{2}$, i.e. $|2z-re^{i\phi}| >r$ and thus 
		\begin{equation*}
			 |g_\phi(z)|=\frac{r}{|2z-re^{i\phi}|}<1.
		\end{equation*}
		Therefore, $re^{i\phi}$ is a peak point of $\overline{\mathbb{A}}_r$. If $(a_1, \dotsc, a_m) \in \partial \mathbb{A}_r \times \dotsc \times \partial \mathbb{A}_r$ then each $a_k$ is either $e^{i\theta_k}$ or $re^{i\phi_k}$, for some $\theta_k, \phi_k \in [0, 2\pi]$. Define $h:\overline{\mathbb{A}}_r^m \to \mathbb{C}$ by 
		\begin{equation*}
			h(z_1,\dotsc, z_m)=\overset{m}{\underset{k=1}{\prod}}f_{\theta_k}^{m_k}(z_k)g_{\phi_k}^{(1-m_k)}(z_k), \quad \text{where} \quad m_k=  \left\{
		\begin{array}{ll}
			1, & a_k=e^{i\theta_k} \\
			0, & a_k=re^{i\phi_k}. \\
		\end{array} 
		\right.
		\end{equation*}
		Then $h$ is in $\mathscr{A}(\mathbb{A}_r^m)$ as each $f_{\theta_k}$ and $g_{\phi_k}$ are in $\mathscr{A}(\mathbb{A}_r).$ Since $f_{\theta_k}$ and $g_{\phi_k}$ peaks at $e^{i\theta_k}$ and $re^{i\phi_k}$ respectively, we have that $h(a_1, \dotsc, a_m)=1$ and $|h(z_1,\dotsc, z_m)|<1$ for all $(z_1,\dotsc, z_m)$ in $\overline{\mathbb{A}}_r^m \setminus \{(a_1, \dotsc, a_m)\}$. Hence, $(a_1, \dotsc, a_m)$ is a peak point for $\mathscr{A}(\mathbb{A}_r^m)$ with $h$ being a peaking function.\\
		\item $(2) \iff (3)$. Note that $\mathscr{A}(\mathbb{A}_r^m)$ is a uniform function algebra on a compact metrizable space $\overline{\mathbb{A}}_r^m$. It was proved by Dales in \cite{G.Dales} that if $\mathcal A$ is a Banach function algebra on a compact metrizable space $X$, then the set of peak points of $\mathcal A$ is dense in the Shilov boundary of $\mathcal A$ (with respect to $X$). So, it follows that the distinguished boundary of $\mathbb{A}_r^m$ is given by the closure of the peak points of $\overline{\mathbb{A}}_r^m$. In this case, the set of peak points is $\partial\mathbb{A}_r \times \dotsc \times \partial\mathbb{A}_r$ which is closed. Hence, we have that $b\mathbb{A}_r^m=\underbrace{\partial\mathbb{A}_r \times \dotsc \times \partial\mathbb{A}_r}_{m-times}.$ The proof is now complete.
		
	\end{proof}
	
	\noindent Recall that an operator $T \in \mathcal{B}(\mathcal{H})$ is an  $\mathbb{A}_r$-\textit{unitary} if $T$ is normal and $\sigma(T) \subseteq \partial \mathbb{A}_r$. Here we characterize a commuting normal operator tuple $(U_1, \dots , U_m)$ having its Taylor joint spectrum in the distinguished boundary $b\Arm$ of $\Arm$. Such an operator tuple is called an $\Arm$-unitary. First let us note down a few important observations about an $\Ar$-unitary.
	
	\begin{thm}[\cite{N-S1}, Theorem 3.2]
	An operator $N \in \mathcal B(\mathcal K)$ is an $\Ar$-unitary if and only if there is an orthogonal decomposition $\mathcal K = \mathcal K_1 \oplus \mathcal K_2$ and there are unitaries $U_1 \in \mathcal B(\mathcal K_1)$ and $U_2 \in \mathcal B(\mathcal K_2)$ such that	
	\[
       N=	\begin{bmatrix}
            U_1 &  0     \\
            0   &  rU_2  \\ 		
	        \end{bmatrix}
	   \quad \mbox{ with respect to } \; \mathcal K =\ \mathcal K_1 \oplus \mathcal K_2 .    
\]
\end{thm}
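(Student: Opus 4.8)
The plan is to read off the block structure of $N$ directly from the spectral theorem for normal operators, exploiting the geometric fact that $\partial\Ar=\T\cup r\T$ is a disjoint union of two compact circles (disjoint precisely because $0<r<1$).

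The reverse implication is immediate: if $N=U_1\oplus rU_2$ with $U_1\in\mathcal B(\mathcal K_1)$ and $U_2\in\mathcal B(\mathcal K_2)$ unitaries, then $N$ is an orthogonal direct sum of normal operators and hence normal, while $\sigma(N)=\sigma(U_1)\cup\sigma(rU_2)\subseteq\T\cup r\T=\partial\Ar$, so $N$ is an $\Ar$-unitary. For the forward implication, let $N$ be normal with $\sigma(N)\subseteq\T\cup r\T$ and let $E(\cdot)$ be its spectral measure, so $N=\int z\,dE(z)$. Since $\T$ and $r\T$ are disjoint Borel sets, $P_1:=E(\T)$ and $P_2:=E(r\T)$ are mutually orthogonal projections, and $P_1+P_2=E(\T\cup r\T)=E(\sigma(N))=I$ because $\sigma(N)\subseteq\T\cup r\T$. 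Hence $\mathcal K=\mathcal K_1\oplus\mathcal K_2$ where $\mathcal K_j:=\operatorname{ran}P_j$. Spectral projections of a normal operator commute with the operator, so each $\mathcal K_j$ reduces $N$; write $N_1:=N|_{\mathcal K_1}$ and $N_2:=N|_{\mathcal K_2}$, which are normal operators on $\mathcal K_1,\mathcal K_2$ with $\sigma(N_1)\subseteq\T$ and $\sigma(N_2)\subseteq r\T$.

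It remains to invoke the elementary fact that a normal operator $M$ with $\sigma(M)\subseteq\T$ is unitary: by the spectral theorem $M^*M=\int_{\sigma(M)}|z|^2\,dE_M(z)=\int_{\sigma(M)}1\,dE_M(z)=I$, and likewise $MM^*=I$. Applying this to $N_1$ shows $N_1$ is unitary, so we set $U_1:=N_1$; applying it to $\tfrac1r N_2$, whose spectrum lies in $\T$ since $\sigma(N_2)\subseteq r\T$, shows $\tfrac1r N_2$ is unitary, so we set $U_2:=\tfrac1r N_2$, giving $N_2=rU_2$. Therefore $N=U_1\oplus rU_2$ relative to $\mathcal K=\mathcal K_1\oplus\mathcal K_2$, as desired. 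There is no genuine obstacle in this argument; the only step deserving attention is verifying that the spectral subspaces $\mathcal K_1,\mathcal K_2$ actually reduce $N$ and together span $\mathcal K$, which is exactly where the disjointness of $\T$ and $r\T$ together with the containment $\sigma(N)\subseteq\T\cup r\T$ are used. One could equally avoid the full spectral theorem and instead take the Riesz idempotent associated with the clopen piece $\sigma(N)\cap\T$ of the spectrum, which for a normal operator is automatically an orthogonal projection commuting with $N$; the spectral-measure formulation above is simply more transparent.
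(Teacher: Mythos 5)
Your argument is correct and is the standard spectral-theorem proof: the paper itself states this result only as a citation of Theorem 3.2 of \cite{N-S1} and gives no proof, but the decomposition via the spectral projections $E(\T)$ and $E(r\T)$ of the disjoint closed pieces of $\partial\Ar$ is exactly the natural (and, in the cited source, essentially the actual) route. All the steps you flag — that the two spectral subspaces reduce $N$, sum to $\mathcal K$ because $\sigma(N)\subseteq\T\cup r\T$, and carry restrictions that are unitary, respectively $r$ times a unitary — are justified as you state them.
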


	\begin{lem}
		\label{T,rT} Let $T \in \mathcal{B}(\mathcal{H}).$ Then $T$ is an $\mathbb{A}_r$-unitary if and only if $rT^{-1}$ is an $\mathbb{A}_r$- unitary.
	\end{lem}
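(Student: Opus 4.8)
The plan is to prove the equivalence by exploiting the spectral-mapping property of normal operators together with the fact that the map $z \mapsto r/z$ is an automorphism of $\overline{\mathbb{A}}_r$ that swaps the two boundary circles $\mathbb{T}$ and $r\mathbb{T}$. First I would observe that if $T$ is an $\mathbb{A}_r$-unitary, then $T$ is normal and $\sigma(T) \subseteq \partial \mathbb{A}_r = \mathbb{T} \cup r\mathbb{T}$; in particular $0 \notin \sigma(T)$, so $T$ is invertible and $T^{-1}$ is a well-defined normal operator (the inverse of an invertible normal operator is normal, since it commutes with $T^*= T^{-1}\cdot(T^*T)$, or more simply because $T$ and $T^*$ generate a commutative $C^*$-algebra in which $T^{-1}$ lies). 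Hence $rT^{-1}$ is normal as well.

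Next I would apply the spectral mapping theorem to the rational function $g(z) = r/z$, which is holomorphic on a neighbourhood of $\sigma(T)$ since $0 \notin \sigma(T)$: this gives $\sigma(rT^{-1}) = g(\sigma(T)) = \{\, r/\lambda : \lambda \in \sigma(T)\,\}$. Now the key computation: if $\lambda \in \mathbb{T}$ then $|r/\lambda| = r$, so $r/\lambda \in r\mathbb{T}$; and if $\lambda \in r\mathbb{T}$, i.e. $|\lambda| = r$, then $|r/\lambda| = r/r = 1$, so $r/\lambda \in \mathbb{T}$. Therefore $g$ maps $\partial \mathbb{A}_r$ bijectively onto itself (interchanging the two circles), and consequently $\sigma(rT^{-1}) \subseteq \partial \mathbb{A}_r$. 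Combined with normality of $rT^{-1}$, this shows $rT^{-1}$ is an $\mathbb{A}_r$-unitary, giving the forward implication.

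For the converse, I would note the involutive nature of the transformation: applying the same operation to $S := rT^{-1}$ yields $rS^{-1} = r(rT^{-1})^{-1} = r \cdot r^{-1} T = T$. So if $rT^{-1}$ is an $\mathbb{A}_r$-unitary, the forward implication applied to $S = rT^{-1}$ shows that $rS^{-1} = T$ is an $\mathbb{A}_r$-unitary. This closes the equivalence.

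I do not anticipate a serious obstacle here; the only point requiring a modicum of care is the justification that $T^{-1}$ (and hence $rT^{-1}$) is normal, and the clean application of the spectral mapping theorem, both of which are standard once one notes $0 \notin \sigma(T)$ because $\sigma(T) \subseteq \partial\mathbb{A}_r$ and $0 \notin \partial\mathbb{A}_r$. Alternatively, one could bypass the spectral mapping theorem entirely and argue via the spectral measure: writing $T = \int_{\partial\mathbb{A}_r} z\, dE(z)$ for its spectral decomposition, one has $rT^{-1} = \int_{\partial\mathbb{A}_r} (r/z)\, dE(z)$, and pushing forward the spectral measure along $z \mapsto r/z$ exhibits $rT^{-1}$ as a normal operator with spectrum contained in $\partial\mathbb{A}_r$; I would mention this as an equivalent route but carry out the spectral-mapping version as the primary argument since it is shorter.
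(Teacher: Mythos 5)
Your proposal is correct and follows essentially the same route as the paper: both arguments rest on the normality of $rT^{-1}$ together with the spectral mapping theorem for $z \mapsto r/z$, which interchanges the two boundary circles $\mathbb{T}$ and $r\mathbb{T}$. Your write-up is merely a bit more explicit about why $T^{-1}$ is normal and about the involutive nature of the map, but no new idea is involved.
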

	\begin{proof} $T$ is normal if and only if $rT^{-1}$ is normal and
		\begin{equation*}
			\begin{split}
				\sigma(rT^{-1}) \subseteq \mathbb{T} \cup r\mathbb{T}  \iff \sigma(T^{-1}) \subseteq r^{-1}\mathbb{T} \cup \mathbb{T}
				\iff \sigma(T) \subseteq r\mathbb{T} \cup \mathbb{T}.
			\end{split}
		\end{equation*}
		The last equivalence holds from the spectral mapping theorem for the continuous functional calculus of a normal operator from which we have that $\sigma(T^{-1})=\{z^{-1}| z \in \sigma(T)\}.$
	\end{proof}
	\begin{lem}\label{Polyannulus-each annulus}
		An $m$-tuple $\underline{T}=(T_1, \dotsc, T_m)$ of commuting normal operators in $\mathcal{B}(\mathcal{H})$ is $\mathbb{A}_r^m$-unitary if and only if each $T_k$ is an $\mathbb{A}_r$-unitary.
	\end{lem}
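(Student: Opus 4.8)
The plan is to deduce the lemma from a single structural property of the Taylor joint spectrum together with the identification $b\mathbb{A}_r^m=(\partial\mathbb{A}_r)^m$ from Theorem \ref{distt}. The tool I would invoke is the spectral mapping property of the Taylor spectrum applied to the coordinate projections $\pi_k\colon\mathbb{C}^m\to\mathbb{C}$, $\pi_k(z_1,\dots,z_m)=z_k$. Each $\pi_k$ is linear, hence holomorphic on a neighbourhood of $\sigma_T(\underline{T})$, so Taylor's spectral mapping theorem gives
\[
\sigma(T_k)=\sigma\bigl(\pi_k(\underline{T})\bigr)=\pi_k\bigl(\sigma_T(\underline{T})\bigr),\qquad k=1,\dots,m.
\]
Since $\sigma_T(\underline{T})\subseteq\pi_1(\sigma_T(\underline{T}))\times\cdots\times\pi_m(\sigma_T(\underline{T}))$ for purely set-theoretic reasons, this one identity encodes both one-sided containments needed in the two directions of the equivalence. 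Note also that ``commuting normal tuple'' enters both sides of the equivalence merely as a standing hypothesis: if $\underline{T}$ is an $\mathbb{A}_r^m$-unitary then by definition it is a commuting normal tuple, so each $T_k$ is normal; conversely, if each $T_k$ is an $\mathbb{A}_r$-unitary then each $T_k$ is normal and $\underline{T}$ is a commuting normal tuple. Hence only the locations of the spectra have to be matched.

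For the forward implication I would assume $\underline{T}$ is an $\mathbb{A}_r^m$-unitary, so that $\sigma_T(\underline{T})\subseteq b\mathbb{A}_r^m=(\partial\mathbb{A}_r)^m$ by Theorem \ref{distt}; then for each $k$,
\[
\sigma(T_k)=\pi_k\bigl(\sigma_T(\underline{T})\bigr)\subseteq\pi_k\bigl((\partial\mathbb{A}_r)^m\bigr)=\partial\mathbb{A}_r,
\]
and since $T_k$ is normal this makes $T_k$ an $\mathbb{A}_r$-unitary. For the converse I would take each $T_k$ to be an $\mathbb{A}_r$-unitary; then $\underline{T}$ is a commuting normal tuple with $\sigma(T_k)\subseteq\partial\mathbb{A}_r$ for every $k$, whence
\[
\sigma_T(\underline{T})\subseteq\sigma(T_1)\times\cdots\times\sigma(T_m)\subseteq(\partial\mathbb{A}_r)^m=b\mathbb{A}_r^m,
\]
using Theorem \ref{distt} again, so $\underline{T}$ is an $\mathbb{A}_r^m$-unitary.

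The only ingredient that is not entirely formal is the set-valued identity $\sigma(T_k)=\pi_k(\sigma_T(\underline{T}))$, and in particular its surjective form (that the coordinate projections of the Taylor spectrum recover the single-operator spectra); this is precisely Taylor's spectral mapping theorem, so I would simply cite \cite{TaylorII}. If one prefers to avoid it, there is a hands-on alternative: since the $T_k$ are commuting normals, the Fuglede--Putnam theorem shows they doubly commute, so the spectral theorem furnishes a joint spectral measure $E$ supported on $\sigma_T(\underline{T})$ with $T_k=\int z_k\,dE$, and then $\sigma(T_k)$ is the essential range of the coordinate function $z\mapsto z_k$, which equals $\pi_k(\sigma_T(\underline{T}))$ because $\sigma_T(\underline{T})$ is compact. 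Either way the argument is short and uses no analysis on the polyannulus beyond Theorem \ref{distt}, so I do not expect a genuine obstacle here; the only care needed is to invoke the spectral mapping theorem in the correct (set-valued, surjective) form rather than just the trivial containment $\sigma_T(\underline{T})\subseteq\sigma(T_1)\times\cdots\times\sigma(T_m)$.
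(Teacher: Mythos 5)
Your proof is correct and follows essentially the same route as the paper: the forward direction via the projection property of the Taylor joint spectrum (which is exactly your spectral mapping identity $\sigma(T_k)=\pi_k(\sigma_T(\underline{T}))$) and the converse via the containment $\sigma_T(\underline{T})\subseteq\sigma(T_1)\times\dotsb\times\sigma(T_m)$ together with Theorem \ref{distt}. Your remarks on the surjective form of the projection property and the spectral-measure alternative are just a more explicit account of what the paper cites implicitly.
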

	\begin{proof} Assume that $\underline{T}=(T_1, \dotsc, T_m)$ is an $\Arm$-unitary. Then $T_1, \dots , T_m$ are commuting normal operators with $\sigma_T(\underline{T}) \subseteq b \mathbb{A}_r^m$. Then the projection property of the Taylor joint spectrum implies that $ \sigma(T_k) \subseteq \partial \mathbb{A}_r$. Hence each $T_k$ is an $\mathbb{A}_r$-unitary. Conversely, suppose each $T_k$ is an $\mathbb{A}_r$-unitary. Then
	 \[
	 \sigma_T(\underline{T}) \subseteq \sigma(T_1) \times \dotsc \times \sigma(T_m) \subseteq \underbrace{\partial \mathbb{A}_r \times \dots \times \partial \mathbb{A}_r}_{m-times}=b\Arm
	 \]
	 and thus $(T_1, \dots , T_m)$ is an $\Arm$-unitary.
	 
	\end{proof}
	

	\section{Rational functional calculus on the polyannulus $\Arm$} \label{Analytic}
	
\vspace{0.3cm}	
	
	\noindent Recall that an $\Arm$-contraction $\underline{T}=(T_1, \dots , T_m)$ admits a rational dilation or a normal $b\Arm$-dilation if there is a commuting normal $m$-tuple of operators $\underline{N}=(N_1, \dots , N_m)$ on a bigger Hilbert space $\mathcal K \supseteq \HS$ with $\sigma_T(\underline{N})\subseteq \ov{\mathbb A}_r^m$ such that $f(\underline{T})=P_{\HS} f(\underline{N})|_{\HS}$ for any rational function $f=p \slash q$, where $p,q \in \C [z_1, \dots , z_m]$ with $q$ having no zeros in $\ov{\mathbb A}_r^m$. In this Section, we show that it suffices to take into consideration a smaller class of functions, namely the monomials with both positive and negative integral powers. Before that we briefly recollect a few basic facts from the literature \cite{Range} about multiple series.
	
	\begin{defn}
		The multiple series $\underset{k \in \mathbb{N}^m}{\sum}c_k$ is called \textit{convergent} if 
		\[
		\underset{k \in \mathbb{N}^m}{\sum}|c_k|=\sup\bigg\{\underset{k \in P}{\sum}|c_k|: \ P \ \mbox{finite}
		\bigg\} < \infty.
		\]
	\end{defn}
	
	Below we have a characterization for convergence of a multiple series.
	
	\begin{prop}
		The multiple series $\underset{k \in \mathbb{N}^m}{\sum}c_k$ is convergent if and only if  given any bijection $\nu: \mathbb{N} \to \mathbb{N}^m,$ the ordinary series
		$ 
		\overset{\infty}{{\underset{j=0}{\sum}}}c_{\nu(j)}
		$
		converges in the usual sense to a limit $L \in \mathbb{C}$ which is independent of $\nu.$
	\end{prop}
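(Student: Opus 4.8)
The plan is to recognize this statement as the multi-index version of the classical fact that a series of complex numbers converges unconditionally if and only if it converges absolutely; indeed, in the definition above ``convergent'' means exactly that the family $(c_k)_{k\in\mathbb{N}^m}$ is absolutely summable, i.e. $M:=\sum_{k\in\mathbb{N}^m}|c_k|<\infty$.

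For the implication ``$\Longrightarrow$'' I would start from $M<\infty$ and fix an arbitrary bijection $\nu:\mathbb{N}\to\mathbb{N}^m$. For each $n$ the finite sum $\sum_{j=0}^{n}|c_{\nu(j)}|$ equals $\sum_{k\in P}|c_k|$ with $P=\{\nu(0),\dots,\nu(n)\}$, hence is at most $M$; so $\sum_{j\ge0}c_{\nu(j)}$ is absolutely convergent and therefore convergent in the usual sense to some $L_\nu\in\mathbb{C}$. To see that $L_\nu$ does not depend on $\nu$, I would fix $\varepsilon>0$, pick a finite $P_0\subset\mathbb{N}^m$ with $\sum_{k\notin P_0}|c_k|<\varepsilon$, and choose $N$ with $P_0\subseteq\{\nu(0),\dots,\nu(N)\}$, which is possible because $\nu$ is onto. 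Then $\bigl|\sum_{j=0}^{n}c_{\nu(j)}-\sum_{k\in P_0}c_k\bigr|\le\sum_{k\notin P_0}|c_k|<\varepsilon$ for all $n\ge N$, so $\bigl|L_\nu-\sum_{k\in P_0}c_k\bigr|\le\varepsilon$; since $P_0$ (and hence $\sum_{k\in P_0}c_k$) was chosen without reference to $\nu$, letting $\varepsilon\to0$ gives $L_\nu=L_{\nu'}$ for any two bijections $\nu,\nu'$.

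For the implication ``$\Longleftarrow$'' I would argue by contraposition: assuming $\sum_{k\in\mathbb{N}^m}|c_k|=\infty$, I would exhibit one bijection $\nu$ for which $\sum_{j\ge0}c_{\nu(j)}$ fails to converge, which already contradicts the hypothesis. Since $\sum_k|c_k|=\infty$, at least one of $\sum_k|\mathrm{Re}\,c_k|$ and $\sum_k|\mathrm{Im}\,c_k|$ is infinite, and after replacing $(c_k)$ by $(ic_k)$ if necessary I may assume $\sum_k|\mathrm{Re}\,c_k|=\infty$. I would split $\mathbb{N}^m=A\cup B\cup C$ according as $\mathrm{Re}\,c_k$ is positive, negative or zero, and set $S_A=\sum_{k\in A}\mathrm{Re}\,c_k$ and $S_B=-\sum_{k\in B}\mathrm{Re}\,c_k$, so that $S_A+S_B=\infty$. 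If only one of them is infinite, say $S_A=\infty$ while $S_B<\infty$, then for \emph{every} bijection $\nu$ the quantity $\mathrm{Re}\,\bigl(\sum_{j=0}^{n}c_{\nu(j)}\bigr)$ is a partial sum of $\sum_{k\in A}\mathrm{Re}\,c_k$ that increases to $S_A=\infty$, minus a nonnegative partial sum bounded by $S_B$, hence it tends to $+\infty$ and $\sum_j c_{\nu(j)}$ diverges (the case $S_B=\infty$, $S_A<\infty$ being symmetric). If instead both $S_A$ and $S_B$ are infinite, so $A$ and $B$ are infinite, I would build $\nu$ by a Riemann-type interleaving of fixed enumerations of $A$, $B$ and $C$: in the $n$-th round consume, in order, just enough still-unused terms of $A$ to push the real part of the running partial sum above $n$ — possible since the tail sum over $A$ is always infinite — then one more term of $B$, then one more term of $C$ if any remain. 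Because $S_A=\infty$ and the terms of $A$ are taken in enumeration order, the $A$-contribution to the partial sums is unbounded, which forces every term of $A$ to be used eventually; likewise every term of $B$ and of $C$ is used, so $\nu$ is a genuine bijection of $\mathbb{N}$ onto $\mathbb{N}^m$. Since the real part of its partial sums exceeds $n$ infinitely often, $\sum_j c_{\nu(j)}$ does not converge, again contradicting the hypothesis.

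I expect the final step to be the main obstacle: one must check that the Riemann-type rearrangement is a bijection of \emph{all} of $\mathbb{N}^m$ — that the greedy, enumeration-ordered consumption in each round really exhausts each of $A$, $B$ and $C$ — while simultaneously keeping the real parts of the partial sums unbounded. The remaining ingredients are just bookkeeping with finite sums.
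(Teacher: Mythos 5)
Your argument is correct. Note that the paper itself gives no proof of this proposition --- it is quoted from Range's book (Section 1.5), where it is established by exactly the standard ``unconditional convergence equals absolute convergence'' argument you reproduce --- so there is no in-paper proof to compare against. The one genuinely delicate point, namely that the greedy Riemann-type interleaving in the case $S_A=S_B=\infty$ really exhausts all of $A$ (a term of $A$ left unused forever would cap the real parts of the partial sums by a finite constant, contradicting the requirement to exceed the threshold $n$ in round $n$), is the point you flag yourself, and your construction handles it correctly.
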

	
	For further details of the above result see Section 1.5 in \cite{Range}. We also have the following Laurent series expansion in the multivariable setting. 
	
	\begin{prop}[\cite{Range}, Theorem 1.4] \label{prop:503}
	
	Every function analytic on $\overline{\mathbb{A}}_r^m$ has a unique representation
		\[
		f(z)=\underset{k \in \mathbb{Z}^m}{\sum}f_kz^k \ \ \mbox{for} \ z \in \overline{\mathbb{A}}_r^m
		\]
		and the series converges uniformly and absolutely on $\overline{\mathbb{A}}_r^m.$
		
	\end{prop}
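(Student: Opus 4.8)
\emph{Proof plan.} Recall that analyticity on the compact set $\overline{\mathbb{A}}_r^m$ means holomorphy on an open neighbourhood of it, and by compactness any such neighbourhood contains an enlarged polyannulus $\Omega=\{z\in\C^m:\sigma<|z_i|<\tau,\ 1\le i\le m\}$ for some fixed $0<\sigma<r<1<\tau$; so we may assume $f$ is holomorphic on $\Omega$. The plan is to prove the expansion by induction on $m$, the base case $m=1$ being the classical Laurent theorem: $f(z)=\sum_{k\in\mathbb{Z}}f_k z^{k}$ on $\overline{\mathbb{A}}_r$ with $f_k=\tfrac{1}{2\pi i}\int_{|\zeta|=\rho}f(\zeta)\zeta^{-k-1}\,d\zeta$ for any $\rho\in(\sigma,\tau)$, the series converging absolutely and uniformly on $\overline{\mathbb{A}}_r$.

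For the inductive step, fix $(z_2,\dots,z_m)$ with $\sigma<|z_j|<\tau$ and apply the one-variable result to the slice $z_1\mapsto f(z_1,z_2,\dots,z_m)$, obtaining $f(z)=\sum_{k_1\in\mathbb{Z}}a_{k_1}(z_2,\dots,z_m)\,z_1^{k_1}$ with $a_{k_1}(z_2,\dots,z_m)=\tfrac{1}{2\pi i}\int_{|\zeta_1|=\rho_1}f(\zeta_1,z_2,\dots,z_m)\zeta_1^{-k_1-1}\,d\zeta_1$ for any $\rho_1\in(\sigma,\tau)$. A standard argument --- differentiation under the integral sign, or separate analyticity together with continuity --- shows that each $a_{k_1}$ is holomorphic on the corresponding $(m-1)$-dimensional polyannulus, hence on a neighbourhood of $\overline{\mathbb{A}}_r^{m-1}$. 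Applying the induction hypothesis to each $a_{k_1}$ and substituting, I would obtain that $f(z)$ equals the iterated sum $\sum_{k_1\in\mathbb{Z}}\big(\sum_{l\in\mathbb{Z}^{m-1}}f_{(k_1,l)}\,w^{l}\big)z_1^{k_1}$, where $w=(z_2,\dots,z_m)$, and that iterating the one-variable coefficient formula gives
\[
f_k=\frac{1}{(2\pi i)^m}\int_{|\zeta_1|=\rho_1}\!\!\cdots\!\!\int_{|\zeta_m|=\rho_m}\frac{f(\zeta)\,d\zeta_1\cdots d\zeta_m}{\zeta_1^{k_1+1}\cdots\zeta_m^{k_m+1}}
\]
for any radii $\rho_i\in(\sigma,\tau)$, the value being independent of the radii by deforming one contour at a time via Cauchy's theorem in the relevant annulus.

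Next I would establish absolute and uniform convergence on $\overline{\mathbb{A}}_r^m$. Put $M=\sup\{|f(\zeta)|:\sigma\le|\zeta_i|\le\tau\text{ for }1\le i\le m\}<\infty$. Estimating the iterated integral with $\rho_i=\tau$ when $k_i\ge0$ and $\rho_i=\sigma$ when $k_i<0$ yields the geometric bound $|f_k|\le M\prod_{k_i\ge0}\tau^{-k_i}\prod_{k_i<0}\sigma^{-k_i}$ (the products running over the indices $i$ with $k_i\ge0$, resp. $k_i<0$), while $\sup_{z\in\overline{\mathbb{A}}_r^m}|z^k|=\prod_{k_i<0}r^{k_i}$. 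Since $\sigma<r<1<\tau$, the Weierstrass $M$-test applies:
\[
\sum_{k\in\mathbb{Z}^m}\ \sup_{z\in\overline{\mathbb{A}}_r^m}|f_k\,z^{k}|
\ \le\ M\prod_{i=1}^m\Big(\sum_{j\ge0}\tau^{-j}+\sum_{j\ge1}(\sigma/r)^{j}\Big)
\ =\ M\Big(\tfrac{1}{1-\tau^{-1}}+\tfrac{\sigma/r}{1-\sigma/r}\Big)^{m}<\infty .
\]
Hence $\sum_{k\in\mathbb{Z}^m}f_k\,z^{k}$ converges absolutely in the sense recalled above and uniformly on $\overline{\mathbb{A}}_r^m$; absolute convergence lets one rearrange it into the iterated sum of the preceding paragraph, so $\sum_{k\in\mathbb{Z}^m}f_k\,z^{k}=f(z)$ there. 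For uniqueness, if $\sum_k f_k z^{k}=\sum_k g_k z^{k}$ on $\overline{\mathbb{A}}_r^m$ with both series uniformly convergent, fix radii $\rho_i\in[r,1]$, write $z_i=\rho_i e^{i\theta_i}$, multiply by $e^{-i(k_1\theta_1+\cdots+k_m\theta_m)}$ and integrate over $[0,2\pi]^m$ (legitimate by uniform convergence) to get $f_k\rho^{k}=g_k\rho^{k}$, whence $f_k=g_k$ for every $k$.

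The step I expect to be the main obstacle is the inductive passage: one must check not only that the one-variable Laurent coefficients $a_{k_1}$ depend holomorphically on the remaining variables, but --- more delicately --- that splicing the $z_1$-expansion together with the inductively obtained expansions of the $a_{k_1}$ produces a genuinely convergent multiple series, absolute and uniform on all of $\overline{\mathbb{A}}_r^m$, rather than a merely iterated one. This is precisely what the explicit iterated Cauchy integral formula for $f_k$, together with the ensuing geometric decay estimate, is designed to supply; with that estimate in place the remaining steps are routine.
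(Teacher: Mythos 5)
The paper does not prove this proposition at all: it is imported verbatim from Range's book (\cite{Range}, Theorem 1.4, on Laurent expansions in Reinhardt domains), so there is no internal proof to compare against. Your argument is correct and is essentially the standard one: interpreting analyticity on the compact polyannulus as holomorphy on a slightly enlarged open polyannulus $\{\sigma<|z_i|<\tau\}$ with $\sigma<r<1<\tau$, deriving the iterated Cauchy integral formula for the coefficients, and then choosing the radius $\tau$ for nonnegative indices and $\sigma$ for negative ones to get the geometric bound $|f_k|\le M\prod_{k_i\ge0}\tau^{-k_i}\prod_{k_i<0}\sigma^{-k_i}$, which against $\sup_{\overline{\mathbb{A}}_r^m}|z^k|=\prod_{k_i<0}r^{k_i}$ gives convergence of $\sum_k\sup|f_kz^k|$ by factoring the sum over $\mathbb{Z}^m$ into a product of two convergent geometric series per variable. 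You correctly identify, and correctly resolve, the one genuinely delicate point: the inductive slicing only produces an iterated sum, and it is the uniform coefficient estimate (hence unordered absolute summability in the sense the paper defines for multiple series) that licenses the rearrangement into the multiple series equal to $f$. The uniqueness argument by integrating against characters on a torus $\{|z_i|=\rho_i\}\subset\overline{\mathbb{A}}_r^m$ is also fine. No gaps.
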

	
	\noindent Following the above series expansion of a rational function $f$ on $\overline{\mathbb{A}}_r^m$, we now show how $f(\underline{T})$ makes sense when $\underline{T}=(T_1, \dots , T_m)$ is an $\Arm$-contraction. We need the following two lemmas for this purpose.
	
	\begin{lem}[\cite{N-S1}, Lemma 2.2]  \label{||T||_in_Ar}
		For every $\mathbb{A}_r$-contraction $T$ on a Hilbert space $\mathcal{H},$ we have that $r \leq  \|T\| \leq 1.$
	\end{lem}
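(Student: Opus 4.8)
The plan is to invoke von Neumann's inequality for $\overline{\mathbb A}_r$ twice: once for the coordinate function $f(z)=z$ and once for its reciprocal $f(z)=z^{-1}$, both of which are legitimate members of $Rat(\overline{\mathbb A}_r)$ since $\overline{\mathbb A}_r$ avoids $0$.

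First I would record what the hypothesis gives us. Since $\overline{\mathbb A}_r$ is a spectral set for $T$, by definition $\sigma(T)\subseteq \overline{\mathbb A}_r$ and $\|f(T)\|\le \|f\|_{\infty,\overline{\mathbb A}_r}$ for every $f\in Rat(\overline{\mathbb A}_r)$. The upper bound is then immediate: taking $f(z)=z$, which has no poles anywhere, we get
\[
\|T\|=\|f(T)\|\le \sup_{z\in\overline{\mathbb A}_r}|z|=1.
\]

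For the lower bound I would first note that $0\notin\overline{\mathbb A}_r$, so $0\notin\sigma(T)$ and hence $T$ is invertible; moreover $z\mapsto z^{-1}$ lies in $Rat(\overline{\mathbb A}_r)$. Applying von Neumann's inequality to $g(z)=z^{-1}$ gives $g(T)=T^{-1}$ and
\[
\|T^{-1}\|\le \sup_{z\in\overline{\mathbb A}_r}\frac{1}{|z|}=\frac{1}{r}.
\]
From this, for every $x\in\mathcal H$ we have $\|x\|=\|T^{-1}Tx\|\le r^{-1}\|Tx\|$, i.e.\ $\|Tx\|\ge r\|x\|$; taking the supremum over unit vectors yields $\|T\|\ge r$. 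Combining the two bounds gives $r\le\|T\|\le 1$.

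There is essentially no serious obstacle here; the argument is elementary. The only points requiring a word of care are that the invertibility of $T$ is what licenses the substitution $g(T)=T^{-1}$ (this uses $\sigma(T)\subseteq\overline{\mathbb A}_r\not\ni 0$), and that a bound on $\|T^{-1}\|$ must be converted into a \emph{lower} bound on $\|T\|$ via the bounded-below estimate rather than being used directly.
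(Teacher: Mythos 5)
Your proof is correct and is the standard argument: the paper itself does not reprove this lemma but imports it from \cite{N-S1}, where the proof is exactly this application of von Neumann's inequality to $z$ and $z^{-1}$ on $\overline{\mathbb A}_r$. The care you take in noting that $0\notin\sigma(T)$ licenses $g(T)=T^{-1}$ and that the bound on $\|T^{-1}\|$ must be converted into a lower bound on $\|T\|$ is exactly right.
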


	\begin{lem} [\cite{N-S1}, Lemma 2.5] \label{T and rT^-1}
		An operator $T$ is an $\mathbb{A}_r$-contraction if and only if $rT^{-1}$ is an $\mathbb{A}_r$-contraction.
	\end{lem}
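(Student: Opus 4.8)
The plan is to exploit that $\phi(z)=r/z$ is an involutive automorphism of the closed annulus $\overline{\mathbb{A}}_r$ and to transport von Neumann's inequality through it. First I would observe that, since $\overline{\mathbb{A}}_r$ is a spectral set for $T$, we have $\sigma(T)\subseteq\overline{\mathbb{A}}_r$, which does not contain the origin; hence $T$ is invertible and $rT^{-1}$ is well defined. An elementary computation shows $r\le|z|\le 1\iff r\le r/|z|\le 1$, so $\phi$ maps $\overline{\mathbb{A}}_r$ bijectively onto itself, and plainly $\phi\circ\phi=\mathrm{id}$. By the spectral mapping theorem, $\sigma(rT^{-1})=\phi(\sigma(T))\subseteq\phi(\overline{\mathbb{A}}_r)=\overline{\mathbb{A}}_r$, which takes care of the spectral containment required in the definition of an $\mathbb{A}_r$-contraction.

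For von Neumann's inequality, fix $f=p/q\in Rat(\overline{\mathbb{A}}_r)$ and set $g:=f\circ\phi$. I would first check that $g\in Rat(\overline{\mathbb{A}}_r)$: writing $g(z)=\frac{z^N p(r/z)}{z^N q(r/z)}$ for $N$ sufficiently large exhibits $g$ as a ratio of two polynomials, and the zeros of the new denominator lie either at $z=0$ or at points of the form $r/w$ with $w\ne 0$ a zero of $q$; since each such $w$ satisfies $w\notin\overline{\mathbb{A}}_r$ (so $|w|<r$ or $|w|>1$), all of these points lie outside $\overline{\mathbb{A}}_r$. Next, since $\phi$ is a bijection of $\overline{\mathbb{A}}_r$, one has $\|g\|_{\infty,\overline{\mathbb{A}}_r}=\|f\|_{\infty,\overline{\mathbb{A}}_r}$. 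Finally, because the scalar rational functional calculus is compatible with composition, $g(T)=f(\phi(T))=f(rT^{-1})$; concretely, $g(T)=T^N p(rT^{-1})\big(T^N q(rT^{-1})\big)^{-1}=p(rT^{-1})q(rT^{-1})^{-1}$ by commutativity of the algebra generated by $T$. Using that $\overline{\mathbb{A}}_r$ is a spectral set for $T$, we conclude
\[
\|f(rT^{-1})\|=\|g(T)\|\le\|g\|_{\infty,\overline{\mathbb{A}}_r}=\|f\|_{\infty,\overline{\mathbb{A}}_r},
\]
so $\overline{\mathbb{A}}_r$ is a spectral set for $rT^{-1}$, i.e.\ $rT^{-1}$ is an $\mathbb{A}_r$-contraction.

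For the converse I would apply what has just been proved to the operator $rT^{-1}$ in place of $T$: since $\phi$ is an involution, $r(rT^{-1})^{-1}=T$, and $rT^{-1}$ is invertible because its spectrum avoids $0$; hence $rT^{-1}$ being an $\mathbb{A}_r$-contraction forces $T$ to be one as well. The only mildly delicate points are the bookkeeping of the poles of $f\circ\phi$ near $z=0$ and $z=\infty$ and the verification that the rational functional calculus obeys $(f\circ\phi)(T)=f(\phi(T))$; both are routine once the spectra of $T$ and $rT^{-1}$ are known to avoid the relevant poles, so I do not anticipate any real obstacle.
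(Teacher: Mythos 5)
Your proof is correct and complete: the involution $\phi(z)=r/z$ of $\overline{\mathbb{A}}_r$, the verification that $f\circ\phi\in Rat(\overline{\mathbb{A}}_r)$ with the same sup norm, and the algebraic identity $(f\circ\phi)(T)=f(rT^{-1})$ together transfer both the spectral containment and von Neumann's inequality exactly as needed, and the involutivity handles the converse. The paper itself imports this lemma from \cite{N-S1} without reproving it, and your argument is precisely the standard one given there.
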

	
	\begin{prop}\label{Functional calculus}
		For an $\Arm$-contraction $\underline{T}=(T_1, \dots , T_m)$ on a Hilbert space $\mathcal{H}$ and a rational function $f(z)=\underset{k \in \mathbb{Z}^m}{\sum}f_kz^k$ on $\overline{\mathbb{A}}_r^m$, the series $\underset{k \in \mathbb{Z}^m}{\sum}f_k\underline{T}^k$ converges in operator norm and 
		\[
		f(\underline{T})=\underset{k \in \mathbb{Z}^m}{\sum}f_k\underline{T}^k.
		\]
	\end{prop}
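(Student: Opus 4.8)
The plan is to show that $\sum_{k}f_k\underline{T}^k$ converges \emph{absolutely} in operator norm, by combining crude norm bounds on $T_1,\dots,T_m$ with Cauchy estimates for the Laurent coefficients $f_k$, and then to identify the sum with $f(\underline{T})$ via uniqueness of the multivariable Laurent expansion (Proposition \ref{prop:503}). First I would record the needed bounds on the $T_i$. Since $\ov{\mathbb A}_r^m$ is a spectral set for $\underline T$, von Neumann's inequality applied to the coordinate function $z_i$ gives $\|T_i\|\le\sup_{\ov{\mathbb A}_r^m}|z_i|=1$; the projection property of the Taylor joint spectrum gives $\sigma(T_i)\subseteq\ov{\mathbb A}_r$, so $T_i$ is invertible, and von Neumann's inequality applied to $z_i^{-1}\in Rat(\ov{\mathbb A}_r^m)$ gives $\|T_i^{-1}\|\le\sup_{\ov{\mathbb A}_r^m}|z_i|^{-1}=r^{-1}$. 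As $T_1,\dots,T_m$ commute, so do $T_1^{\pm1},\dots,T_m^{\pm1}$, hence $\underline{T}^k:=T_1^{k_1}\cdots T_m^{k_m}$ is well defined for every $k=(k_1,\dots,k_m)\in\mathbb Z^m$, with
\[
\|\underline{T}^k\|\le\prod_{i:\,k_i\ge0}\|T_i\|^{k_i}\prod_{i:\,k_i<0}\|T_i^{-1}\|^{-k_i}\le\prod_{i:\,k_i<0}r^{k_i}.
\]

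Next, being analytic on the compact set $\ov{\mathbb A}_r^m$, the function $f$ is analytic on an open neighbourhood of it, which (by a standard compactness argument) contains a polyannulus $\{z:\rho<|z_i|<\sigma,\ 1\le i\le m\}$ with $\rho<r<1<\sigma$. Fix $\rho<\rho'<r<1<\sigma'<\sigma$ and put $M:=\sup\{|f(z)|:\rho'\le|z_i|\le\sigma'\ \forall i\}<\infty$. By Proposition \ref{prop:503} the expansion $f=\sum_kf_kz^k$ is valid on this larger polyannulus with the same coefficients, and the Cauchy integral formula (integrating $z_i$ over $|z_i|=\sigma'$ when $k_i\ge0$ and over $|z_i|=\rho'$ when $k_i<0$) yields $|f_k|\le M\prod_{i:\,k_i\ge0}(\sigma')^{-k_i}\prod_{i:\,k_i<0}(\rho')^{-k_i}$. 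Multiplying this with the bound above, the $i$-th coordinate contributes a factor $(\sigma')^{-k_i}$ when $k_i\ge0$ and $(\rho')^{-k_i}r^{k_i}=(\rho'/r)^{|k_i|}$ when $k_i<0$, so
\[
\sum_{k\in\mathbb Z^m}\|f_k\underline{T}^k\|\ \le\ M\prod_{i=1}^m\Big(\sum_{j=0}^{\infty}(\sigma')^{-j}+\sum_{j=1}^{\infty}(\rho'/r)^{j}\Big)\ <\ \infty,
\]
because $\sigma'>1$ and $\rho'/r<1$ (the interchange of sum and product for a series of nonnegative terms is exactly the definition of convergence of a multiple series given earlier). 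Hence $S:=\sum_{k\in\mathbb Z^m}f_k\underline{T}^k$ converges in $\mathcal B(\mathcal H)$, independently of the order of summation.

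It remains to identify $S$ with $f(\underline{T})$. Write $f=p/q$ with $p,q\in\C[z_1,\dots,z_m]$, $q$ nonvanishing on $\ov{\mathbb A}_r^m$, and $q(z)=\sum_l q_l z^l$ a finite sum. Then $q(\underline{T})$ commutes with each $T_i^{\pm1}$, hence with every partial sum of $\sum_kf_k\underline{T}^k$ and so with $S$, and
\[
q(\underline{T})\,S=\sum_k f_k\,q(\underline{T})\underline{T}^k=\sum_{k,l}q_l f_k\,\underline{T}^{k+l}=\sum_{n\in\mathbb Z^m}\Big(\sum_{k+l=n}q_l f_k\Big)\underline{T}^n,
\]
the rearrangement being justified by $\sum_{k,l}|q_l||f_k|\,\|\underline{T}^{k+l}\|\le\big(\sum_l|q_l|\,\|\underline{T}^l\|\big)\big(\sum_k|f_k|\,\|\underline{T}^k\|\big)<\infty$ (using $\underline{T}^{k+l}=\underline{T}^k\underline{T}^l$ and the previous step). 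The Cauchy-product coefficient $\sum_{k+l=n}q_l f_k$ is the $n$-th Laurent coefficient of $qf=p$; since $p$ is a polynomial, by uniqueness of the Laurent expansion (Proposition \ref{prop:503}) these coefficients vanish for all but finitely many $n$ and equal $p_n$ there. Thus $q(\underline{T})S=\sum_n p_n\underline{T}^n=p(\underline{T})$, and since $q(\underline{T})$ is invertible ($q$ having no zeros on $\ov{\mathbb A}_r^m\supseteq\sigma_T(\underline{T})$) and commutes with $p(\underline{T})$, we conclude $S=p(\underline{T})q(\underline{T})^{-1}=f(\underline{T})$.

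I expect the only real obstacle to be the bookkeeping in the middle paragraph: one must notice that only the coordinates with $k_i<0$ are dangerous (that is the meaning of the factor $\prod_{i:\,k_i<0}r^{k_i}$ in the norm estimate), and that they are controlled precisely by choosing the inner Cauchy radius $\rho'$ strictly below $r$. A cruder estimate such as $\|\underline{T}^k\|\le r^{-|k|_1}$ would not close the argument, since $f$ need not extend analytically to radii as large as $1/r$. Everything else is routine given the spectral mapping theorem for the Taylor functional calculus and the uniqueness of Laurent expansions.
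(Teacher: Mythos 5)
Your proof is correct, and while it follows the same skeleton as the paper's --- bound $\|T_i\|\le 1$ and $\|T_i^{-1}\|\le r^{-1}$, deduce absolute operator-norm convergence of $\sum_{k}f_k\underline{T}^k$, then identify the sum with $f(\underline{T})$ --- both sub-steps are carried out by different means. For convergence, the paper does not use Cauchy estimates on an enlarged polyannulus: it substitutes $\tilde z_j=rz_j^{-1}$ in the coordinates with $k_j<0$ and quotes the absolute convergence of the Laurent series on $\ov{\mathbb A}_r^m$ itself (Proposition \ref{prop:503}), evaluated at points whose coordinate moduli are $\|T_j\|$ or $r\|T_j^{-1}\|$, all of which lie in $[r,1]$. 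That is shorter, at the cost of leaning on the cited convergence statement and leaving implicit that the evaluation point depends on the sign pattern of $k$ (a finite, $2^m$-fold, case split); your Cauchy-estimate route is more self-contained and makes explicit why only the coordinates with $k_i<0$ are dangerous and why the strict enlargement $\rho'<r$ is what controls them --- your closing remark about the cruder bound $r^{-|k|_1}$ is exactly on target. For the identification, the paper simply invokes the continuity of the holomorphic (Taylor) functional calculus applied to the partial sums; your algebraic argument $q(\underline{T})S=p(\underline{T})$ via Cauchy products and uniqueness of Laurent coefficients avoids that machinery entirely and uses only the definition $f(\underline{T})=p(\underline{T})q(\underline{T})^{-1}$ together with the spectral mapping theorem for the invertibility of $q(\underline{T})$. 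Both versions are sound; yours is longer but more elementary and fills in precisely the two places where the paper's proof is tersest.
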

	\begin{proof}
		From Lemma \ref{||T||_in_Ar} and Lemma \ref{T and rT^-1}, we have that $r\leq \|T_j \|\leq 1$ and $r \leq \|rT_j^{1}\| \leq 1$ for each $j=1, \dots, m.$ We rewrite the series as
		\[
		\underset{k \in \mathbb{Z}^m}{\sum}f_kz^k=\underset{k \in \mathbb{Z}^m}{\sum}\Tilde{f}_k\Tilde{z_1}^{|k_1|} \dotsc \Tilde{z_m}^{|k^m|}, \quad \text{where } \;\; \Tilde{z_j}^{|k_j|}= \left\{
		\begin{array}{ll}
			z_j^{k_j} &  \ k_j \geq 1 \\
			(rz_j^{-1})^{-k_j} & \; k_j <0 \; ,\\
		\end{array}
		\right.
		\]
		and $\Tilde{f_k}=(r^{k_{j_1}}\dots r^{k_{j_l}})f_k, \;\;\;  l \;$  being the number of negative integers in the tuple $k=(k_1, k_2, \dotsc, k_m)$. In a similar fashion let us write
		\begin{equation*}
			\begin{split}
				\underset{k \in \mathbb{Z}^m}{\sum}f_k\underline{T}^k =\underset{k \in \mathbb{Z}^m}{\sum}\Tilde{f}_k\Tilde{T_1}^{|k_1|} \dotsc \Tilde{T_m}^{|k^m|} \; , \quad \text{where }\;\; \Tilde{T_j}^{|k_j|}= \left\{
		\begin{array}{ll}
			T_j^{k_j} &  \ k_j \geq 1 \\
			(rT_j^{-1})^{-k_j} &  \; k_j <0 \;.\\
		\end{array} \ \ 
		\right.
			\end{split}
		\end{equation*}
		For each $j=1,2,\dotsc, m$, we have that $r\leq \|\Tilde{T_j}\| \leq 1$ and therefore we have that
		\begin{equation*}
			\begin{split}
				\underset{k \in \mathbb{Z}^m}{\sum} \|f_k\underline{T}^k \| 
				&=\underset{k \in \mathbb{Z}^m}{\sum}\bigg|\bigg|\Tilde{f}_k\Tilde{T_1}^{|k_1|}\dotsc \Tilde{T_m}^{|k^m|}\bigg|\bigg|\\ & \leq   \underset{k \in \mathbb{Z}^m}{\sum}|\Tilde{f}_k| \ \|\Tilde{T_1}\|^{|k_1|} \dotsc \|\Tilde{T_m}\|^{|k^m|} \\
			\end{split}
		\end{equation*}
		which is a convergent series as $\|\Tilde{T}_j\| \in \overline{\mathbb{A}}_r$ and the series of $f$ converges absolutely in $\overline{\mathbb{A}}_r^m$. Therefore, the series $\underset{k \in \mathbb{Z}^m}{\sum}f_k\underline{T}^k$ converges in operator norm to a bounded operator say $\widetilde T$. Now the continuity property of the holomorphic functional calculus implies that $f(\underline{T}) = \underset{k \in \mathbb{Z}^m}{\sum}f_k\underline{T}^k$ and the proof is complete.
		
	\end{proof}
	
In light of Propositions \ref{prop:503} \& \ref{Functional calculus}, the definition of normal $b\Arm$-dilation reduces to considering only monomials of positive and negative integral powers in the following way.
	
	\begin{prop}\label{Power}
	
		Let $\underline{T}=(T_1, \dotsc, T_m)$ be an $\mathbb{A}_r^m$-contraction on $\mathcal{H}$ and $\underline{U}=(U_1, \dotsc, U_m)$ be another $\mathbb{A}_r^m$-contraction on a Hilbert space $\mathcal{K}$ containing $\mathcal{H}$. Then the following are equivalent.
		\begin{enumerate}
			\item $f(\underline{T})x=P_\mathcal{H}f(\underline{U})x$ \ for every $f \in Rat(\overline{\mathbb{A}}_r^m)$ and $ \ x \in \mathcal{H}$ ;
			\item $\underline{T}^kx=P_\mathcal{H}\underline{U}^kx$ \ for every $k \in \mathbb{Z}^m$ and $x \in \mathcal{H}.$
		\end{enumerate}
		
	\end{prop}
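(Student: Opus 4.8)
The plan is to read this equivalence straight off the Laurent-series calculus already set up in Propositions~\ref{prop:503} and~\ref{Functional calculus}; no new idea is required. First I would dispose of $(1)\Rightarrow(2)$. For each $k=(k_1,\dots,k_m)\in\mathbb{Z}^m$ the monomial $z\mapsto z^k=z_1^{k_1}\cdots z_m^{k_m}$ lies in $Rat(\overline{\mathbb{A}}_r^m)$: writing it as $p/q$ with $p=\prod_{k_j>0}z_j^{k_j}$ and $q=\prod_{k_j<0}z_j^{-k_j}$, the only zeros of $q$ sit on the coordinate hyperplanes $\{z_j=0\}$, and these miss $\overline{\mathbb{A}}_r^m$ because $r>0$. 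Specializing hypothesis $(1)$ to $f(z)=z^k$ then gives $\underline{T}^kx=P_{\mathcal{H}}\underline{U}^kx$ for all $x\in\mathcal{H}$, which is exactly $(2)$.

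For the converse $(2)\Rightarrow(1)$, fix $f\in Rat(\overline{\mathbb{A}}_r^m)$. Since $f=p/q$ with $q$ nonvanishing on the compact set $\overline{\mathbb{A}}_r^m$, $q$ is nonvanishing on an open neighbourhood of it, so $f$ is analytic on $\overline{\mathbb{A}}_r^m$ and Proposition~\ref{prop:503} provides the absolutely and uniformly convergent expansion $f(z)=\sum_{k\in\mathbb{Z}^m}f_kz^k$ on $\overline{\mathbb{A}}_r^m$. Both $\underline{T}$ and $\underline{U}$ are $\mathbb{A}_r^m$-contractions, so Proposition~\ref{Functional calculus} applies to each and yields
\[
f(\underline{T})=\sum_{k\in\mathbb{Z}^m}f_k\underline{T}^k,\qquad f(\underline{U})=\sum_{k\in\mathbb{Z}^m}f_k\underline{U}^k,
\]
with both series convergent in operator norm. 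Then for any $x\in\mathcal{H}$ the vector series $\sum_k f_k\underline{U}^kx$ converges in $\mathcal{K}$ to $f(\underline{U})x$, and applying the bounded (norm-one) operator $P_{\mathcal{H}}$ term by term together with hypothesis $(2)$ gives
\[
P_{\mathcal{H}}f(\underline{U})x=\sum_{k\in\mathbb{Z}^m}f_k\,P_{\mathcal{H}}\underline{U}^kx=\sum_{k\in\mathbb{Z}^m}f_k\,\underline{T}^kx=f(\underline{T})x,
\]
the last equality being Proposition~\ref{Functional calculus} once more. This establishes $(1)$.

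There is essentially no obstacle here; the one point deserving care is the interchange of $P_{\mathcal{H}}$ with the infinite multiple sum. This is legitimate precisely because the series $\sum_k f_k\underline{U}^kx$ converges in norm in $\mathcal{K}$ (Proposition~\ref{Functional calculus}) and $P_{\mathcal{H}}$ is continuous, so it passes inside the limit of the partial sums taken over any exhaustion of $\mathbb{Z}^m$ by finite sets, independently of the order of summation. The whole proposition is thus a bookkeeping consequence of the functional-calculus results of the previous section, and its role is merely to reduce the verification of a normal $b\mathbb{A}_r^m$-dilation to checking the intertwining relations on the monomials $\underline{T}^k$, $k\in\mathbb{Z}^m$.
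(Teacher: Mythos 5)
Your proof is correct and follows essentially the same route as the paper: $(1)\Rightarrow(2)$ by specializing to the monomials $z^k$, and $(2)\Rightarrow(1)$ by expanding $f$ via Proposition \ref{prop:503}, invoking Proposition \ref{Functional calculus} for both tuples, and passing $P_{\mathcal{H}}$ through the norm-convergent series. Your added justifications (that the monomials lie in $Rat(\overline{\mathbb{A}}_r^m)$ and that the interchange of $P_{\mathcal{H}}$ with the sum is legitimate) are points the paper leaves implicit.
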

	
	\begin{proof}
		$(1) \Rightarrow (2)$ is obvious. We prove that $(2) \Rightarrow (1)$ here. Let $f \in Rat \;(\ov{\mathbb A}_r^m)$ and let $f(z)={\underset{k \in \mathbb{Z}^m}{\sum}}f_kz^k$ as in Proposition \ref{prop:503}. It follows from Lemma $\ref{Functional calculus}$ that 
		\[
		f(\underline{T})=\underset{k \in \mathbb{Z}^m}{\sum}f_k\underline{T}^k \quad \text{ and } \quad  f(\underline{U})=\underset{k \in \mathbb{Z}^m}{\sum}f_k\underline{U}^k.
		\]
		 So, for any $x\in \HS$ we have that 
		\begin{equation*}
			\begin{split}
				f(\underline{T})x
				=\underset{k \in \mathbb{Z}^m}{\sum}f_k\underline{T}^kx
				=\underset{k \in \mathbb{Z}^m}{\sum}f_k(P_\mathcal{H}\underline{U}^kx)
				=P_\mathcal{H}\underset{k \in \mathbb{Z}^m}{\sum}f_k\underline{U}^kx=P_\mathcal{H}f(\underline{U})x .\\
			\end{split}
		\end{equation*}

	\end{proof}

\vspace{0.2cm}
	\section{The Dirichelt problem for the polyannulus $\Arm$}\label{Dirichlet problem}

\vspace{0.3cm}	
	
\noindent Recall that if $X \subset \C^m$ is a compact set, then $C_{\mathbb R}(X)$ denotes the space of real-valued continuous functions on $X$. In this Section, we solve a Dirichlet problem for the polyannulus $\Arm$. Indeed, we show that a function $f$ that is continuous on the distinguished boundary $b\Arm$ can be extended to a function $\hat f$ on $\ov{\mathbb A}_r^m$ such that $\hat f$ is harmonic in each variable separately. We call such a function $\hat f$ a \textit{strongly harmonic function}. We begin with a Dirichlet problem for an annulus $\Ar$.

\begin{thm}[\cite{Axler}, Theorem 10.13]\label{DirichletI}
		Let $f$ be a real-valued continuous function on $\partial \mathbb{A}_r$ given by
		\[ 
		f(z)= \left\{
		\begin{array}{ll}
			g(z) & z \in \mathbb{T} \\
			h(z) & z \in r\mathbb{T}. \\
		\end{array} 
		\right. 
		\]
		There exists a unique $u \in C_\mathbb{R}(\overline{\mathbb{A}}_r)$ such that 
		$ \Delta u=0 \ \mbox{in} \ \mathbb{A}_r \ \mbox{and} \ u=f \ \mbox{on} \ \partial \mathbb{A}_r$. 
		Moreover, the unique solution $u$ is given by 
		\begin{equation} \label{eqn:601}
			u(z)=\underset{\mathbb{T}}{\int}g(w)P_A(z,w)dw+ \underset{\mathbb{T}}{\int}h(rw)P_A(z,rw)dw \,,
		\end{equation}
		where the functions $P_A(.,w)$ and $P_A(.,rw)$ are harmonic on $\mathbb{A}_r$ for each $w \in \mathbb{T}$.
	\end{thm}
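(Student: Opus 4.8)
\emph{Plan of proof.} The plan is to dispose of uniqueness by the maximum principle, reduce the existence question to a one‑sided boundary–value problem via a conformal automorphism of $\Ar$, solve that by separation of variables, and then read off the kernels $P_A$ by summing the resulting Fourier series. For uniqueness: if $u_1,u_2\in C_{\mathbb R}(\ov{\mathbb A}_r)$ are harmonic in $\Ar$ and both equal $f$ on $\partial\Ar$, then $u_1-u_2$ is harmonic in $\Ar$, continuous on the compact set $\ov{\mathbb A}_r$, and vanishes on $\partial\Ar$, so the maximum and minimum principles force $u_1\equiv u_2$. The same argument shows that the solution operator $f\mapsto u$ is positive and satisfies $\|u\|_\infty\le\|f\|_\infty$, which I will use in the boundary analysis. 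Since $\kappa(z)=r/z$ is a conformal automorphism of $\Ar$ interchanging $\T$ and $r\T$, and $v\circ\kappa$ is harmonic whenever $v$ is, it suffices to solve the two one‑sided problems ``$u=g$ on $\T$, $u=0$ on $r\T$'' and (applying $\kappa$) ``$u=0$ on $\T$, $u=h$ on $r\T$'' and add the solutions.

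For the one‑sided problem with data $g$ on $\T$, I would expand $g(e^{i\varphi})=\sum_{n\in\mathbb Z}c_ne^{in\varphi}$ and look for $u(\rho e^{i\theta})=\sum_{n}\phi_n(\rho)e^{in\theta}$ with each term separately harmonic, i.e. $\phi_n(\rho)=a_n\rho^{|n|}+b_n\rho^{-|n|}$ for $n\ne0$ and $\phi_0(\rho)=a_0+b_0\log\rho$. Imposing $\phi_n(1)=c_n$ and $\phi_n(r)=0$ determines the coefficients uniquely, yielding $\phi_n(\rho)=c_n(\rho^{|n|}-r^{2|n|}\rho^{-|n|})/(1-r^{2|n|})$ for $n\ne0$ and $\phi_0(\rho)=c_0\log(\rho/r)/\log(1/r)$. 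For each fixed $\rho\in(r,1)$ the terms decay geometrically (by $\rho^{|n|}$ and by $(r^2/\rho)^{|n|}$ with $r^2/\rho<1$), so the series converges together with all derivatives, locally uniformly in $\Ar$, and defines a harmonic function there. Substituting $c_n=\tfrac1{2\pi}\int_0^{2\pi}g(e^{i\varphi})e^{-in\varphi}\,d\varphi$ and interchanging sum and integral exhibits $u$ as $\int_{\T}g(w)P_A(z,w)\,|dw|$ with
\[
P_A(\rho e^{i\theta},e^{i\varphi})=\frac1{2\pi}\left(\frac{\log(\rho/r)}{\log(1/r)}+\sum_{n\ne0}\frac{\rho^{|n|}-r^{2|n|}\rho^{-|n|}}{1-r^{2|n|}}\,e^{in(\theta-\varphi)}\right),
\]
which is harmonic in $z$ for each fixed $w\in\T$; the kernel $P_A(\cdot,rw)$ comes out symmetrically from the $\kappa$‑reflected problem, and summing the two integrals gives the asserted formula \eqref{eqn:601}.

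The remaining point, and the one I expect to be the main obstacle, is to show that $u$ extends continuously to $\ov{\mathbb A}_r$ with the correct boundary values. Here I would verify that $P_A$ behaves like a good (approximate‑identity) kernel: $P_A(z,\cdot)\ge0$ by positivity of the solution operator, $\int_{\T}P_A(z,w)\,|dw|=\log(\rho/r)/\log(1/r)\to1$ as $|z|\to1$ while $\to0$ as $|z|\to r$, and the mass of $P_A(z,\cdot)$ concentrates at $w=z/|z|$ as $|z|\to1$; the standard convolution argument then gives $u(z)\to g(e^{i\varphi})$ as $z\to e^{i\varphi}$ and $u(z)\to0$ as $z\to re^{i\varphi}$, first for trigonometric polynomials and then for arbitrary continuous $g$ via the bound $\|u\|_\infty\le\|g\|_\infty$. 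Unlike the single Poisson kernel of the disc, here two kernels act on the two circles simultaneously, and the delicate part is precisely that the $\T$‑kernel must not only peak correctly as $|z|\to1$ but also have vanishing total mass as $|z|\to r$ (and dually for the $r\T$‑kernel), so that the two pieces of boundary data do not interfere.

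If one prefers to bypass the explicit kernel estimates, an alternative is Perron's method: $\Ar$ satisfies an exterior‑ball condition at every boundary point — a barrier at $\zeta\in\T$ is $z\mapsto 1-\mathrm{Re}(z\bar\zeta)=\mathrm{Re}(1-\bar\zeta z)$, harmonic, nonnegative on $\ov{\mathbb A}_r$ and vanishing only at $\zeta$, while at $\zeta\in r\T$ one takes $z\mapsto\log(|z-p|/\delta)$ for the interior tangent disc $B(p,\delta)\subset r\D$ — so every boundary point is regular and the Perron solution is harmonic in $\Ar$ and continuous up to $\partial\Ar$ with boundary value $f$. The integral representation \eqref{eqn:601} is then recovered by applying the Riesz–Kakutani representation theorem (Theorem \ref{thm:2RK}) to the positive bounded functional $f\mapsto u(z)$ on $C_{\mathbb R}(\partial\Ar)$ and observing that its representing measure, by reflection across the real‑analytic circles $\T$ and $r\T$, is absolutely continuous with a harmonic density, namely $P_A(z,\cdot)$.
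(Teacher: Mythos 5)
The paper does not prove this statement; it is quoted verbatim from Axler--Bourdon--Wade (Theorem 10.13 of \cite{Axler}), so there is no internal proof to compare against. Your argument is a correct, self-contained proof and follows essentially the same route as the cited source: uniqueness by the maximum principle, reduction to the two one-sided problems via $z\mapsto r/z$, solution by Fourier/Laurent separation of variables with $\phi_n(\rho)=c_n(\rho^{|n|}-r^{2|n|}\rho^{-|n|})/(1-r^{2|n|})$ and the logarithmic $n=0$ mode, and passage from trigonometric polynomials to general continuous data using the sup-norm contractivity of the solution operator. One small logical ordering point: you invoke ``$P_A(z,\cdot)\ge0$ by positivity of the solution operator'' before the solvability of the boundary-value problem has been established, which is circular as written; but this is harmless because the density argument for trigonometric polynomials (where the finite sum is manifestly continuous up to $\partial\mathbb A_r$ with the correct boundary values) already yields existence without any kernel positivity or concentration estimates, and your Perron-method alternative with the stated barriers is likewise complete.
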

\begin{rem} \label{rem:601}
	The above theorem shows that the functions defined on $\ov{\mathbb A}_r$ by
			\begin{equation*}
			u_0(z)=\underset{\mathbb{T}}{\int}P_A(z,w)dw \quad \mbox{and} 	\quad 	u_r(z)=\underset{\mathbb{T}}{\int}P_A(z,rw)dw
		\end{equation*}
	are real-valued continuous and since $\ov{\mathbb A}_r$ is compact, there is $M>0$ such that
\[
|u_0(z)|, |u_r(z)| \leq M \quad \text{ for every } z \in \overline{\mathbb{A}}_r.
\]

\end{rem}
	\vspace{0.2cm}
	 
	In the single-variable setting, the distinguished boundary of $\mathbb{A}_r$ is same as its topological boundary but this is no longer true for the polyannulus $\Arm$. Indeed, the distinguished boundary of $\mathbb{A}_r^m$, when $m\geq 2$, is given by $b\mathbb{A}_r^m=(\partial \mathbb{A}_r)^m$. Note that this is a very thin subset of the topological boundary of $\mathbb{A}_r^m$, yet we shall see that any continuous real-valued function $f$ on $b\Arm$ can have a unique strongly harmonic extension to $\ov{\mathbb A}_r^m$. This is the unique solution to the Dirichlet problem for $\Arm$, which we denote by $\hat f$, and call the \textit{harmonic extension} of $f$. This is the main result of this Section.
	
	\begin{thm}\label{Dirichlet}
	
		Let $m\geq 2$ and let $f$ be a real-valued continuous function on $b\mathbb{A}_r^m$. Then there is a unique real-valued continuous function $u$ on $\overline{\mathbb{A}}_r^m$ such that $\Delta_j u=0 $ in $\mathbb{A}_r^m $ for each $j=1,2,\dotsc, m$ and $ u|_{b\Arm}=f$.
		
		\end{thm}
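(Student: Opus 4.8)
The plan is to build the solution by applying, one coordinate at a time, the single–variable solution operator of Theorem \ref{DirichletI}, and to obtain uniqueness from a maximum principle for functions that are harmonic in each variable separately. For $z\in\Ar$ let $\omega_z$ be the representing measure of the positive functional $g\mapsto(\mathcal{P}g)(z)$ on $C_{\mathbb R}(\partial\Ar)$, where
\[
(\mathcal{P}g)(z)=\int_{\mathbb T}g(w)P_A(z,w)\,dw+\int_{\mathbb T}g(rw)P_A(z,rw)\,dw
\]
is the harmonic function on $\Ar$ extending $g$ continuously to $\overline{\mathbb A}_r$ given by Theorem \ref{DirichletI}; we also set $\omega_z=\delta_z$ for $z\in\partial\Ar$, so $\mathcal{P}g|_{\partial\Ar}=g$. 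Since $\mathcal{P}g$ is harmonic and continuous up to the boundary, the classical maximum principle gives $\|\mathcal{P}g\|_{\infty,\overline{\mathbb A}_r}=\|g\|_{\infty,\partial\Ar}$, and applying this to $g\equiv 1$ shows $\omega_z$ is a probability measure (cf. Remark \ref{rem:601}); these facts drive the estimates below.

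First I would record the maximum principle: if $v\in C_{\mathbb R}(\overline{\mathbb A}_r^m)$ satisfies $\Delta_j v=0$ in $\Arm$ for every $j$, then $\|v\|_{\infty,\overline{\mathbb A}_r^m}=\|v\|_{\infty,b\Arm}$. This is proved by induction on $m$, the case $m=1$ being the ordinary maximum principle for harmonic functions on $\Ar$. The one delicate point is that the restriction of $v$ to a slice $\{z_k=\eta\}$ with $\eta\in\partial\Ar$ is still separately harmonic in the remaining variables: if $\eta_n\in\Ar$ and $\eta_n\to\eta$, then $v(\ldots,\eta_n,\ldots)\to v(\ldots,\eta,\ldots)$ uniformly on $\overline{\mathbb A}_r^{m-1}$ by uniform continuity on a compact set, and a uniform limit of functions harmonic in a fixed variable is harmonic in that variable. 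Granting this, fix $z_1\in\overline{\mathbb A}_r$; the inductive hypothesis applied to $v(z_1,\cdot)$ pushes its maximum modulus onto $(\partial\Ar)^{m-1}$, and then for each fixed $(w_2,\dots,w_m)\in(\partial\Ar)^{m-1}$ the one–variable maximum principle applied to the harmonic function $z_1\mapsto v(z_1,w_2,\dots,w_m)$ pushes it onto $\partial\Ar$, whence $\|v\|_{\infty,\overline{\mathbb A}_r^m}=\|v\|_{\infty,(\partial\Ar)^m}=\|v\|_{\infty,b\Arm}$ by Theorem \ref{distt}. Uniqueness in Theorem \ref{Dirichlet} is then immediate, since the difference of two solutions is a continuous separately harmonic function vanishing on $b\Arm$.

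For existence I would again induct on $m$, the base case $m=1$ being Theorem \ref{DirichletI}. Given $f\in C_{\mathbb R}(b\Arm)$, for $\zeta\in\partial\Ar$ set $f_\zeta=f(\zeta,\cdot)\in C_{\mathbb R}(b\mathbb A_r^{m-1})$ and let $\widehat{f_\zeta}\in C_{\mathbb R}(\overline{\mathbb A}_r^{m-1})$ be its strongly harmonic extension supplied by the inductive hypothesis. Uniqueness in dimension $m-1$ makes the extension map $g\mapsto\widehat g$ linear, so $\widehat{f_\zeta}-\widehat{f_{\zeta'}}=\widehat{f_\zeta-f_{\zeta'}}$, and the $(m-1)$–dimensional maximum principle gives $\|\widehat{f_\zeta}-\widehat{f_{\zeta'}}\|_\infty=\|f_\zeta-f_{\zeta'}\|_\infty$; combined with the uniform continuity of $f$, this shows $\zeta\mapsto\widehat{f_\zeta}$ is continuous into $C(\overline{\mathbb A}_r^{m-1})$, hence $\psi(\zeta,w'):=\widehat{f_\zeta}(w')$ is jointly continuous on $\partial\Ar\times\overline{\mathbb A}_r^{m-1}$. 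Define $u(z_1,w')=\mathcal{P}\big(\psi(\cdot,w')\big)(z_1)$. Then $u$ is jointly continuous, because $\|\mathcal{P}(\psi(\cdot,w'))-\mathcal{P}(\psi(\cdot,w'_0))\|_\infty\le\|\psi(\cdot,w')-\psi(\cdot,w'_0)\|_\infty\to 0$ as $w'\to w'_0$ while $z_1\mapsto u(z_1,w'_0)$ is continuous; $u$ is harmonic in $z_1$ by construction; and at a point of $\Arm$, fixing $z_1\in\Ar$ and writing $u(z_1,\cdot)=\int_{\partial\Ar}\widehat{f_\zeta}(\cdot)\,d\omega_{z_1}(\zeta)$, Fubini's theorem together with the mean value property of each $\widehat{f_\zeta}$ in the remaining variables shows $u$ is harmonic in $z_2,\dots,z_m$ as well. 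Finally $\mathcal{P}g|_{\partial\Ar}=g$ and $\widehat{f_\eta}|_{b\mathbb A_r^{m-1}}=f_\eta$ give $u(\eta,w')=\widehat{f_\eta}(w')=f(\eta,w')$ for $(\eta,w')\in\partial\Ar\times b\mathbb A_r^{m-1}=b\Arm$, so $u$ solves the problem; unrolling the recursion exhibits $u$ as an $m$–fold iterated Poisson-type integral of $f$ over $b\Arm$.

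The main obstacle I anticipate is the passage to the topological boundary in each coordinate: ensuring the constructed $u$ is continuous on all of $\overline{\mathbb A}_r^m$ rather than merely on $\Arm$, and verifying that separate harmonicity persists under restriction to boundary slices so that the inductive maximum–principle argument actually closes. Both are handled by uniform–continuity/compactness estimates and by the stability of harmonicity under locally uniform limits; granting these, everything else reduces to Theorem \ref{DirichletI}, Fubini's theorem, and the one–variable maximum principle.
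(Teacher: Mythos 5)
Your proof is correct, but it takes a genuinely different route from the paper's. The paper works explicitly (for $m=2$, asserting the general case is similar): it splits the boundary datum into $2^m$ pieces according to which component ($\mathbb{T}$ or $r\mathbb{T}$) each coordinate lies in, writes each partial solution as an explicit product-kernel integral $\int\!\int g(w_1,w_2)P_A(z_1,w_1)P_A(z_2,w_2)\,dw_1dw_2$, checks separate harmonicity, boundary values and joint continuity for each piece by hand (the continuity argument being the longest part), and sums. You instead induct on $m$, treating the one-variable solution operator $\mathcal{P}$ and its harmonic measures $\omega_z$ abstractly, and you prove the maximum principle for separately harmonic functions \emph{first}, using it both for uniqueness and to get the norm identity $\|\widehat g\|_\infty=\|g\|_\infty$ that drives the continuity of $\zeta\mapsto\widehat{f_\zeta}$. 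Unrolling your recursion produces the same iterated Poisson integral as the paper's sum of $2^m$ terms, so the two constructions agree. What your route buys: a uniform treatment of all $m$ rather than a worked $m=2$ case; a cleaner continuity argument (contractivity of the extension operator replaces the paper's $\epsilon$--$\delta$ estimates on differences of kernel products); and a logically self-contained uniqueness proof --- the paper's one-line appeal to ``uniqueness of the individual solutions'' is incomplete as stated, since a competing solution comes with no a priori decomposition into those four pieces, and the needed maximum principle only appears afterwards as Corollary \ref{Max-mod}. You correctly isolate the one genuinely delicate point, namely that separate harmonicity survives restriction to boundary slices, and your uniform-limit argument for it is sound. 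What the paper's route buys is the explicit closed-form kernel representation, which it reuses in later arguments.
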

	
	\begin{proof}
		We prove the statement for $m=2,$ i.e. the biannulus case and the general case follows similarly. Assume that $f$ is a real-valued continuous function on $\partial \mathbb{A}_r \times \partial \mathbb{A}_r$. Then $f$ has the following form:
		\[ 
		f(z,w)= \left\{
		\begin{array}{ll}
			g(z,w) & \mbox{on} \ \mathbb{T}\times \mathbb{T} \\
			h(z,w) &  \mbox{on} \ \mathbb{T} \times r\mathbb{T} \\
			p(z,w) & \mbox{on} \ r\mathbb{T}\times \mathbb{T} \\
			q(z,w) & \mbox{on} \ r\mathbb{T} \times r\mathbb{T}, \\
		\end{array} 
		\right. 
		\]
		where $g,h,p,q$ are continuous functions. Consider the Dirichlet problem 
		\[ 
		\Delta_1 u=0=\Delta_2 u \;\; \mbox{ in } \; \mathbb{A}_r^2 \;\; \mbox{with} \ \ u= \left\{
		\begin{array}{ll}
			g & \mbox{on} \ \mathbb{T}\times \mathbb{T} \\
			0 &  \mbox{on} \ b\mathbb{A}_r^2\setminus \mathbb{T}^2. \\
		\end{array}
		\right.
		\]
		We show that the function defined by 
		\begin{equation*}
			u_{11}(z_1, z_2)=\underset{\mathbb{T}}{\int}\underset{\mathbb{T}}{\int}g(w_1, w_2)P_A(z_1, w_1)P_A(z_2, w_2)dw_1dw_2 \,,
		\end{equation*}
	where $P_A$ is as in (\ref{eqn:601}), solves the above Dirichlet problem. Given $z_1 \in \overline{\mathbb{A}}_r$, define a function $\phi_{z_1}: \mathbb{T} \to \mathbb{R}$ by 
		\begin{equation*}
			 \phi_{z_1}(w)=\underset{\mathbb{T}}{\int}g(w_1, w)P_A(z_1,w_1)dw_1. 
		\end{equation*}
		We show that $\phi_{z_1} \in C(\mathbb{T})$. It follows from Remark \ref{rem:601} that there exists $M>0$ such that for every $z \in \overline{\mathbb{A}}_r$, we have
		\[
\big|\underset{\mathbb{T}}{\int} P_A(z, w)dw \big| \leq  M.
		\]
		Since $g$ is uniformly continuous on $\mathbb{T}^2$, for every $\epsilon >0$ there is a $\delta>0$ such that 
		\begin{equation*}
			|g(w_1, w)-g(w_1, w')| < \frac{\epsilon}{M} \quad \mbox{whenever} \quad |w-w'|<\delta.    
		\end{equation*}
		So, we have
		\begin{equation*}
			\begin{split}
				\phi_{z_1}(w)-\phi_{z_1}(w') & = \underset{\mathbb{T}}{\int} \bigg(g(w_1, w)-g(w_1, w')\bigg) P_A(z_1, w_1)dw_1 \\
								& < \frac{\epsilon}{M}\underset{\mathbb{T}}{\int} P_A(z_1, w_1)dw_1 \\
				& < \epsilon \; \; \mbox{ whenever } \ |w-w'| < \delta. \\
			\end{split}
		\end{equation*}
	Similarly, $\phi_{z_1}(w')-\phi_{z_1}(w)< \epsilon $ whenever  $|w-w'|< \delta$. Thus, we have that 
	\[
	|\phi_{z_1}(w)-\phi_{z_1}(w')|< \epsilon  \quad \mbox{ whenever }  \quad |w-w'|< \delta.
	\]
Consequently, 
		\[
		u_{11}(z_1, z_2)=\underset{\mathbb{T}}{\int} \phi_{z_1}(w_2)P_A(z_2, w_2) dw_2.
		\]  
For a fixed $z_1 \in \overline{\mathbb{A}}_r,$ Theorem \ref{DirichletI} implies that the function $u_{11}(z_1,.)$ is continuous on $\{z_1\} \times \overline{\mathbb{A}_r}$ and $\Delta_2u_{11}(z_1,.)=0$ on $\{z_1\} \times \mathbb{A}_r$ such that 
		\[ 
		u_{11}(z_1,z)= \left\{
		\begin{array}{ll}
			\phi_{z_1}(z) \,, & z \in  \mathbb{T} \\
			0 \,,& z \in  r\mathbb{T}. \\
		\end{array} 
		\right.
		\]
For $(z_1, z_2) \in \mathbb{T}^2$, Theorem \ref{DirichletI} yields that 
\[
g(z_1, z_2)=\underset{\mathbb{T}}{\int}g(w, z_2)P_A(z_1,w)dw
\quad \mbox{and} \quad 
\phi_{z_1}(z_2) = \underset{\mathbb{T}}{\int} \phi_{z_1}(w_2)P_A(z_2, w_2) dw_2 .
\]
Consequently, we have		
		\begin{equation*}
			\begin{split}
				u_{11}(z_1, z_2)=\underset{\mathbb{T}}{\int} \phi_{z_1}(w_2)P_A(z_2, w_2) dw_2 
				 =\phi_{z_1}(z_2) 
				=\underset{\mathbb{T}}{\int}g(w_1, z_2)P_A(z_1,w_1)dw_1
				=g(z_1,z_2).\\
			\end{split}
		\end{equation*}
		Next, we show that for a fixed $z_2 \in \overline{\mathbb{A}}_r, \ \Delta_1u_{11}=0.$ For that, we first write $u_{11}$ as
		\begin{equation*}
			u_{11}(z_1, z_2)=\underset{\mathbb{T}}{\int}\phi_{z_2}(w_1)P_A(z_1, w_1)dw_1 \,,
		\end{equation*}
		where $\phi_{z_2}: \mathbb{T} \to \mathbb{R}$ is defined by
		\begin{equation*}
			\phi_{z_2}(w)=\underset{\mathbb{T}}{\int}g(w, w_2)P_A(z_2,w_2)dw_2. 
		\end{equation*}
By an argument similar to that above in the proof of continuity of $\phi_{z_1}$, we can say that $\phi_{z_2} \in C(\mathbb{T})$. Let us rewrite
	\[
	u_{11}(z_1, z_2)=\underset{\mathbb{T}}{\int} \phi_{z_2}(w_1)P_A(z_1, w_1) dw_1\,.
	\]
	 For a fixed $z_2 \in \overline{\mathbb{A}}_r,$ the one-variable theory (i.e. Theorem \ref{DirichletI}) implies that $\Delta_1u_{11}=0.$ Therefore, we have that 
		\[ 
		\Delta_1 u_{11}=0=\Delta_2 u_{11} \ \mbox{ on } \ \mathbb{A}_r \times \mathbb{A}_r \ \mbox{ with } \ \ u_{11}= \left\{
		\begin{array}{ll}
			g & \mbox{on} \ \mathbb{T}\times \mathbb{T} \\
			0 &  \mbox{on} \ b\mathbb{A}_r^2\setminus \mathbb{T}^2 .\\
		\end{array}
		\right.
		\]
It only remains to show that the function $u_{11}$ is continuous on $\overline{\mathbb{A}}_r^2$. We have already proved that $u_{11}(z_1,.)$ and $u_{11}(.,z_2)$ are continuous functions on $\overline{\mathbb{A}}_r$ for every $z_1, z_2 \in \overline{\mathbb{A}}_r$. Let $\epsilon > 0 $ be given. 
For every $(z_1, z_2), (\alpha_1, \alpha_2) \in \overline{\mathbb{A}}_r^2$, we have 
\begin{equation} \label{eqn:602}
	\begin{split}
		u_{11}(z_1, z_2)-u_{11}(\alpha_1, \alpha_2) &=\underset{\mathbb{T}}{\int}\underset{\mathbb{T}}{\int}g(w_1, w_2)\bigg[P_A(z_1, w_1)P_A(z_2, w_2)-P_A(\alpha_1, w_1)P_A(\alpha_2, w_2)\bigg]dw_1dw_2  \\
&=\underset{\mathbb{T}}{\int}\underset{\mathbb{T}}{\int}g(w_1, w_2)\bigg[P_A(z_1, w_1)P_A(z_2, w_2)-P_A(\alpha_1, w_1)P_A(z_2, w_2)\bigg]dw_1dw_2  \\
&+\underset{\mathbb{T}}{\int}\underset{\mathbb{T}}{\int}g(w_1, w_2)\bigg[P_A(\alpha_1, w_1)P_A(z_2, w_2)-P_A(\alpha_1, w_1)P_A(\alpha_2, w_2)\bigg]dw_1dw_2 \\
&=A+B. \\
	\end{split}
\end{equation}
Since $g(w_1, w_2)$ is continuous on $\mathbb{T}^2$, there is $K>0$ such that $|g(w_1, w_2)|<K$ for all $(w_1, w_2) \in \mathbb{T}^2$.	Also, the uniform continuity of the functions $u_0$ implies that for $\epsilon >0$, there is a $\delta>0$ such that 
\[
|u_0(z)-u_0(w)|<\frac{\epsilon}{2KM}\,, \quad \mbox{whenever } |z-w|< \delta, \text{ for every } z,w \in \overline{\mathbb{A}}_r \,.
\]
From (\ref{eqn:602}) we have that 
\begin{equation*}
	\begin{split}
A&=	\underset{\mathbb{T}}{\int}\underset{\mathbb{T}}{\int}g(w_1, w_2)\bigg[P_A(z_1, w_1)P_A(z_2, w_2)-P_A(\alpha_1, w_1)P_A(z_2, w_2)\bigg]dw_1dw_2\\	
&=	\underset{\mathbb{T}}{\int}\underset{\mathbb{T}}{\int}g(w_1, w_2)P_A(z_2, w_2)\bigg[P_A(z_1, w_1)-P_A(\alpha_1, w_1)\bigg]dw_1dw_2\\
&\leq K\underset{\mathbb{T}}{\int}\underset{\mathbb{T}}{\int}P_A(z_2, w_2)\bigg[P_A(z_1, w_1)-P_A(\alpha_1, w_1)\bigg]dw_1dw_2\\
&= K\underset{\mathbb{T}}{\int}P_A(z_2, w_2)\bigg[\underset{\mathbb{T}}{\int}(P_A(z_1, w_1)-P_A(\alpha_1, w_1))dw_1\bigg]dw_2\\
&=K\underset{\mathbb{T}}{\int}P_A(z_2, w_2)[u_0(z_1)-u_0(\alpha_1)]dw_2\\
&<K\frac{\epsilon}{2KM}\underset{\mathbb{T}}{\int}P_A(z_2, w_2)dw_2 \\
& <\frac{\epsilon}{2}  \quad \mbox{ whenever $|z_1-\alpha_1|<\delta$}.\\	
\end{split}
\end{equation*}
Similarly, we can prove that $B < \epsilon \slash 2 \;$ whenever $|z_2-\alpha_2|<\delta.$ Consequently, we have 
\[
u_{11}(z_1, z_2)-u_{11}(\alpha_1, \alpha_2) < \epsilon \quad \mbox{whenever $|z_1-\alpha_1|<\delta, |z_2-\alpha_2|<\delta$}.
\]
Interchanging the roles of $z_i$ and $\alpha_i$ for $i=1,2,$ we get 
\[
|u_{11}(z_1, z_2)-u_{11}(\alpha_1, \alpha_2)| < \epsilon \quad \mbox{whenever $|z_1-\alpha_1|<\delta, |z_2-\alpha_2|<\delta$}.
\]
Since $\epsilon>0$ is arbitrary, we have that $u_{11}$ is a continuous function on $\overline{\mathbb{A}}_r^2.$
		Using the continuity of $u_0$ and $u_r$ on $\overline{\mathbb{A}}_r,$ we can show in a similar way that the following Dirichlet type problem has a unique solution in $C_\mathbb{R}(\overline{\mathbb{A}}_r^2)$ which are given below.
		
		\begin{enumerate}
			\item  
			\[ 
			\Delta_1 u=0=\Delta_2 u \;\; \mbox{ on } \ \mathbb{A}_r^2 \; \mbox{ with} \ \ u= \left\{
			\begin{array}{ll}
				h & \mbox{on} \ \mathbb{T}\times r\mathbb{T} \\
				0 &  \mbox{on} \ b\mathbb{A}_r^2\setminus \mathbb{T}\times r\mathbb{T}\,, \\
			\end{array}
			\right.
			\]
			has the solution 
			\begin{equation*}
				u_{1r}(z_1, z_2)=\underset{\mathbb{T}}{\int}\underset{\mathbb{T}}{\int}h(w_1,r w_2)P_A(z_1, w_1)P_A(z_2, rw_2)dw_1dw_2 \,.
			\end{equation*}
			\item 
			\[ 
			\Delta_1 u=0=\Delta_2 u \;\; \mbox{ on} \ \mathbb{A}_r^2 \; \mbox{ with} \ \ u= \left\{
			\begin{array}{ll}
				p & \mbox{on} \ r\mathbb{T}\times \mathbb{T} \\
				0 &  \mbox{on} \ b\mathbb{A}_r^2\setminus r\mathbb{T}
				\times \mathbb{T}\;,\\
			\end{array} 
			\right.
			\]
			has the solution 
			\begin{equation*}
				u_{r1}(z_1, z_2)=\underset{\mathbb{T}}{\int}\underset{\mathbb{T}}{\int}p(rw_1, w_2)P_A(z_1, rw_1)P_A(z_2, w_2)dw_1dw_2 \,.
			\end{equation*}
			
			\item
			\[ 
			\Delta_1 u=0=\Delta_2 u \;\; \mbox{ on} \ \mathbb{A}_r^2 \;\; \mbox{with} \ \ u= \left\{
			\begin{array}{ll}
				q & \mbox{ on } \; r\mathbb{T}\times r\mathbb{T} \\
				0 &  \mbox{ on} \; b\mathbb{A}_r^2\setminus r\mathbb{T}\times r \mathbb{T} \,, \\
			\end{array} 
			\right.
			\]
			has the solution
			\begin{equation*}
				u_{rr}(z_1, z_2)=\underset{\mathbb{T}}{\int}\underset{\mathbb{T}}{\int}q(rw_1, rw_2)P_A(z_1, rw_1)P_A(z_2, rw_2)dw_1dw_2 \,.
			\end{equation*}
		\end{enumerate}
		Putting everything together, we have that $u=u_{11}+u_{1r}+u_{r1}+u_{rr} \in C_\mathbb{R}(\overline{\mathbb{A}}_r^2)$ is a solution to the following Dirichlet problem: 
		\[ 
		\Delta_1 u=0=\Delta_2 u \;\; \mbox{ on} \ \mathbb{A}_r \times \mathbb{A}_r \; \mbox{ with} \ \ u= f= \left\{
		\begin{array}{ll}
			g & \mbox{on} \ \mathbb{T}\times \mathbb{T} \\
			h &  \mbox{on} \ \mathbb{T} \times r\mathbb{T} \\
			p & \mbox{on} \ r\mathbb{T}\times \mathbb{T} \\
			q & \mbox{on} \ r\mathbb{T} \times r\mathbb{T}\,. \\
		\end{array} 
		\right. 
		\]
		The uniqueness of this solution follows from the uniqueness of the individual solutions. For the general polyannulus $\mathbb{A}_r^m$ case, one must have $2^m$ individual Dirichlet type problems and adding the solutions of these $2^m$ Dirichlet problems, we have the unique solution of original Dirichlet problem. The proof is now complete.
		
	\end{proof}
\noindent	Next, we prove some properties of the harmonic extension.
	
	\begin{cor}\label{Max-mod} Given $u \in C_\mathbb{R}(b\mathbb{A}_r^m),$ we have that $ \|\hat{u}\|_{\infty, \, \overline{\mathbb{A}}_r^m}=\|u\|_{\infty,\, b\mathbb{A}_r^m}$.
	\end{cor}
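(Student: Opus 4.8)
The plan is to prove the two inequalities separately. Since $b\Arm\subseteq\ov{\mathbb A}_r^m$ and, by construction, $\hat u$ restricts to $u$ on $b\Arm$, the bound $\|u\|_{\infty,\,b\Arm}\le\|\hat u\|_{\infty,\,\ov{\mathbb A}_r^m}$ is immediate. The substance of the corollary is the reverse inequality $\|\hat u\|_{\infty,\,\ov{\mathbb A}_r^m}\le\|u\|_{\infty,\,b\Arm}$, and for this I would use the maximum principle for harmonic functions in one variable, applied successively in each of the $m$ coordinates.

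First I would record a mild strengthening of the harmonicity asserted in Theorem~\ref{Dirichlet}: for \emph{every} choice of $z_2,\dots,z_m\in\ov{\mathbb A}_r$, the slice $z_1\mapsto\hat u(z_1,z_2,\dots,z_m)$ is continuous on $\ov{\mathbb A}_r$ and harmonic on $\mathbb A_r$, and similarly for slices in the other coordinates. This is visible from the construction in the proof of Theorem~\ref{Dirichlet}: $\hat u$ is a finite sum of terms built by integrating a continuous function against products of the kernels $P_A(z_i,w_i)$ and $P_A(z_i,rw_i)$, so that after freezing the other variables anywhere in $\ov{\mathbb A}_r$ the resulting function of $z_j$ is, by Theorem~\ref{DirichletI}, harmonic on $\mathbb A_r$ and continuous on $\ov{\mathbb A}_r$.

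Now fix $z=(z_1,\dots,z_m)\in\ov{\mathbb A}_r^m$. The real harmonic function $w_1\mapsto\hat u(w_1,z_2,\dots,z_m)$ is continuous on $\ov{\mathbb A}_r$ and harmonic on $\mathbb A_r$, so the maximum principle (applied to it and to its negative) yields $w_1\in\partial\mathbb A_r$ with $|\hat u(z_1,z_2,\dots,z_m)|\le|\hat u(w_1,z_2,\dots,z_m)|$. Since $w_1\in\partial\mathbb A_r\subseteq\ov{\mathbb A}_r$, the same reasoning applies to $w_2\mapsto\hat u(w_1,w_2,z_3,\dots,z_m)$, and iterating through all $m$ coordinates produces a point $(w_1,\dots,w_m)\in(\partial\mathbb A_r)^m=b\Arm$ with
\[
|\hat u(z)|\le|\hat u(w_1,\dots,w_m)|=|u(w_1,\dots,w_m)|\le\|u\|_{\infty,\,b\Arm}.
\]
Taking the supremum over $z\in\ov{\mathbb A}_r^m$ gives $\|\hat u\|_{\infty,\,\ov{\mathbb A}_r^m}\le\|u\|_{\infty,\,b\Arm}$, which together with the trivial inequality completes the proof.

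I do not expect a serious obstacle here; the only point requiring a word of care is the ``robust'' separate harmonicity of the second paragraph, namely that the slices remain harmonic on the open annulus even when the frozen coordinates lie on $\partial\mathbb A_r$, and this falls out of the integral representation of $\hat u$. As an alternative one could bypass the slicing argument entirely by using that the annulus Poisson kernel $P_A$ is nonnegative, being the density of harmonic measure, together with the normalization $\int_{\T}P_A(z,w)\,dw+\int_{\T}P_A(z,rw)\,dw=1$ for $z\in\ov{\mathbb A}_r$, and reading the estimate directly off the product-kernel formula for $\hat u$ obtained in the proof of Theorem~\ref{Dirichlet}.
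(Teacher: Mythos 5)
Your proof is correct and follows essentially the same route as the paper: both reduce to the one-variable maximum principle applied to coordinate slices of $\hat u$, which are continuous on $\ov{\mathbb A}_r$ and harmonic on $\mathbb A_r$ by the construction in Theorem~\ref{Dirichlet}. The only difference is cosmetic — you push an arbitrary point to $b\Arm$ coordinate by coordinate, while the paper starts from a maximizer of $|\hat u|$ and shows it must already lie in $b\Arm$ — and your explicit remark that the slices stay harmonic even when the frozen coordinates lie on $\partial\mathbb A_r$ is a point the paper uses implicitly.
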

	
	\begin{proof}

		Given $u \in C_\mathbb{R}(b\mathbb{A}_r^m),$ we have $\Hat{u} \in C(\overline{\mathbb{A}}_r^m).$ Let $(a_1, \dotsc, a_m) \in \overline{\mathbb{A}}_r^m$ be such that 
		\[ 
		|\hat{u}(a_1, \dotsc, a_m)|=\sup\{|\hat{u}(z_1, \dotsc, z_m)| : \ (z_1, \dotsc, z_m) \in \overline{\mathbb{A}}_r^m   \}.
		 \]
		Define a function $\hat{u}_1(z)=\hat{u}(z, a_2, \dotsc, a_m)$ which is real-valued and continuous on $\overline{\mathbb{A}}_r$. Following the proof of Theorem \ref{Dirichlet}, we have that for fixed $a_2, \dotsc, a_m$ in $\overline{\mathbb{A}}_r$, $\Delta_1\hat{u}=0$ and hence we have that $\Delta\hat{u}_1=0$ on $\mathbb{A}_r$. Since a harmonic function satisfies the Maximum-modulus principle, we have that the maximum modulus of $\hat{u}_1$, which is attained at $a_1$, must belong to $\partial\mathbb{A}_r$. Similarly, one can show that $a_2, \dotsc, a_m $ belong to $\partial \mathbb{A}_r$. Consequently, $(a_1, \dotsc, a_m) \in b\mathbb{A}_r^m$. Since $\hat{u}=u$ on $b \mathbb{A}_r^m,$ we have that 
		\[
		\hat{u}(a_1, a_2, \dotsc, a_m)|=\sup\{|\hat{u}(z_1, z_2, \dotsc, z_m)| : \ (z_1, z_2, \dotsc, z_m) \in \overline{\mathbb{A}}_r^m\}=|u(a_1, a_2, \dotsc, a_m)|.  
		\]
		Thus, combining everything we have that $
			\|\hat{u}\|_{\infty, \; \overline{\mathbb{A}}_r^m}=\|u\|_{\infty,\; b\mathbb{A}_r^m}$ and the proof is complete.
		
	\end{proof}

	\begin{cor}\label{Linear}
	
		Given $u_1, u_2 \in C_\mathbb{R}((\partial \mathbb{A}_r)^m)$ and $\alpha_1, \alpha_1 \in \mathbb{R},$ the unique harmonic extension of $\alpha_1u_1+\alpha_2u_2$ is $\alpha_1\hat{u}_1+\alpha_2\hat{u}_2\,$.
				
	\end{cor}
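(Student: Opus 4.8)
The plan is to deduce this immediately from the uniqueness clause of Theorem \ref{Dirichlet}, using only that each $\Delta_j$ is a linear operator. Recall that the harmonic extension $\hat v$ of a function $v \in C_\mathbb{R}(b\mathbb{A}_r^m)$ is characterised there as the \emph{unique} element of $C_\mathbb{R}(\overline{\mathbb{A}}_r^m)$ that is strongly harmonic (i.e.\ $\Delta_j \hat v = 0$ in $\mathbb{A}_r^m$ for every $j = 1, \dots, m$) and satisfies $\hat v|_{b\mathbb{A}_r^m} = v$. Hence it suffices to check that $w := \alpha_1 \hat{u}_1 + \alpha_2 \hat{u}_2$ has these defining properties relative to the boundary datum $v := \alpha_1 u_1 + \alpha_2 u_2$.

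First I would observe that $w$ lies in $C_\mathbb{R}(\overline{\mathbb{A}}_r^m)$, being a real linear combination of $\hat{u}_1, \hat{u}_2$, which belong to $C_\mathbb{R}(\overline{\mathbb{A}}_r^m)$ by Theorem \ref{Dirichlet}. Next, restricting to the distinguished boundary and using $\hat{u}_i|_{b\mathbb{A}_r^m} = u_i$ for $i = 1, 2$, one gets $w|_{b\mathbb{A}_r^m} = \alpha_1 u_1 + \alpha_2 u_2 = v$. Finally, for each fixed $j$ the linearity of the Laplacian $\Delta_j$ in the $j$-th variable gives
\[
\Delta_j w = \alpha_1 \Delta_j \hat{u}_1 + \alpha_2 \Delta_j \hat{u}_2 = 0 \quad \text{in } \mathbb{A}_r^m,
\]
since $\Delta_j \hat{u}_i = 0$ in $\mathbb{A}_r^m$ for $i = 1, 2$ by Theorem \ref{Dirichlet}; thus $w$ is strongly harmonic. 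Applying the uniqueness part of Theorem \ref{Dirichlet} then forces $w$ to equal the harmonic extension of $v$, which is precisely the assertion $\widehat{\alpha_1 u_1 + \alpha_2 u_2} = \alpha_1 \hat{u}_1 + \alpha_2 \hat{u}_2$.

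I do not anticipate any genuine obstacle here: the only substantive input is the well-posedness (existence together with uniqueness) of the Dirichlet problem on $\mathbb{A}_r^m$ supplied by Theorem \ref{Dirichlet}, and the remainder is the elementary linearity of differentiation. If the one-variable case $m = 1$ is wanted as well, the identical argument applies with Theorem \ref{DirichletI} in place of Theorem \ref{Dirichlet}.
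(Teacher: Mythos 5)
Your argument is correct and is exactly the paper's: verify that $\alpha_1\hat{u}_1+\alpha_2\hat{u}_2$ is continuous, strongly harmonic, and agrees with $\alpha_1 u_1+\alpha_2 u_2$ on $b\mathbb{A}_r^m$, then invoke the uniqueness clause of Theorem \ref{Dirichlet}. No issues.
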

	
	\begin{proof}
	
		The proof is straightforward as the function $\alpha_1\hat{u}_1+\alpha_2\hat{u}_2$ solves the Dirichlet problem 
		\[ 
		\Delta_j u=0 \;\; \mbox{ on } \; \mathbb{A}_r^m \ (1\leq j \leq m) \; \mbox{ and } \; u=\alpha_1u_1+\alpha_2u_2 \ \mbox{ on } \  b\mathbb{A}_r^m.
		 \]
		The rest follows from the uniqueness of the solution. 
	\end{proof}
	Note that given $u \in C_\mathbb{R}(b\mathbb{A}_r^m),$ we have a unique strongly harmonic extension $\hat{u}$ on $\overline{\mathbb{A}}_r^m$. We can  form a bounded linear functional on $C_\mathbb{R}(b\mathbb{A}_r^m)$ by integrating $\hat{u}$ with respect to a regular complex Borel measure on $\overline{\mathbb{A}}_r^m$ to obtain a  regular Borel measure on  $b\mathbb{A}_r^m$. This suggests a way to construct a measure supported at b$\mathbb{A}_r^m$ from an existing measure supported at $\overline{\mathbb{A}}_r^m.$ The following proposition explains this.
	
	
	\begin{prop}\label{measure}
		Let $\nu$ be a regular complex Borel measure on $\overline{\mathbb{A}}_r^m.$ Then there is a unique regular complex Borel measure $\hat{\nu}$ on $b\mathbb{A}_r^m$ such that 
		
		\[
		\underset{\overline{\mathbb{A}}_r^m}{\int}\hat{u} \ d\nu=\underset{b\mathbb{A}_r^m}{\int}u \ d\hat{\nu} \ \ \ \  \ \ \text{ for all } \; u \in C_\mathbb{R}(b\mathbb{A}_r^m).
		\]
	\end{prop}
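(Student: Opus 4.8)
The plan is to realize the harmonic-extension assignment $u \mapsto \hat u$ as a bounded linear functional against $\nu$ and then invoke the Riesz--Kakutani theorem (Theorem \ref{thm:2RK}) to represent it by a measure on $b\mathbb{A}_r^m$. Concretely, for $u \in C_\mathbb{R}(b\mathbb{A}_r^m)$ let $\hat u \in C(\overline{\mathbb{A}}_r^m)$ denote the unique strongly harmonic extension furnished by Theorem \ref{Dirichlet}, and define
\[
\psi(u) = \int_{\overline{\mathbb{A}}_r^m} \hat u \, d\nu .
\]
Since $\hat u$ is continuous, hence Borel measurable, and $\nu$ is a regular complex Borel measure of finite total variation, this integral is well defined, so $\psi : C_\mathbb{R}(b\mathbb{A}_r^m) \to \mathbb{C}$ makes sense.

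First I would verify that $\psi$ is a bounded real-linear functional. Linearity is immediate from Corollary \ref{Linear}, which gives $\widehat{\alpha_1 u_1 + \alpha_2 u_2} = \alpha_1 \hat u_1 + \alpha_2 \hat u_2$ for $\alpha_1, \alpha_2 \in \mathbb{R}$, whence $\psi(\alpha_1 u_1 + \alpha_2 u_2) = \alpha_1 \psi(u_1) + \alpha_2 \psi(u_2)$. Boundedness follows from Corollary \ref{Max-mod}, since
\[
|\psi(u)| \le \|\hat u\|_{\infty, \, \overline{\mathbb{A}}_r^m} \, \|\nu\| = \|u\|_{\infty, \, b\mathbb{A}_r^m}\, \|\nu\|,
\]
where $\|\nu\|$ denotes the total variation of $\nu$.

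Next I would complexify. Every $v \in C(b\mathbb{A}_r^m)$ is uniquely of the form $v = v_1 + i v_2$ with $v_1, v_2 \in C_\mathbb{R}(b\mathbb{A}_r^m)$; set $\Psi(v) = \psi(v_1) + i\psi(v_2)$. Then $\Psi$ is complex-linear, and since $\|v_j\|_{\infty} \le \|v\|_{\infty}$ pointwise, $\Psi$ is bounded on $C(b\mathbb{A}_r^m)$. As $b\mathbb{A}_r^m = (\partial \mathbb{A}_r)^m$ is compact Hausdorff (Theorem \ref{distt}), the Riesz--Kakutani theorem (Theorem \ref{thm:2RK}) produces a unique regular complex Borel measure $\hat\nu$ on $b\mathbb{A}_r^m$ with $\Psi(v) = \int_{b\mathbb{A}_r^m} v\, d\hat\nu$ for all $v \in C(b\mathbb{A}_r^m)$. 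Restricting to real $u$ yields
\[
\int_{b\mathbb{A}_r^m} u\, d\hat\nu = \Psi(u) = \psi(u) = \int_{\overline{\mathbb{A}}_r^m} \hat u\, d\nu,
\]
which is precisely the asserted identity.

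For uniqueness, suppose $\hat\nu'$ is another regular complex Borel measure on $b\mathbb{A}_r^m$ satisfying the same identity. Then $\int_{b\mathbb{A}_r^m} u\, d(\hat\nu - \hat\nu') = 0$ for every $u \in C_\mathbb{R}(b\mathbb{A}_r^m)$, and splitting an arbitrary $v \in C(b\mathbb{A}_r^m)$ into real and imaginary parts gives $\int_{b\mathbb{A}_r^m} v\, d(\hat\nu - \hat\nu') = 0$ for all $v \in C(b\mathbb{A}_r^m)$, so $\hat\nu = \hat\nu'$ by the uniqueness clause of Theorem \ref{thm:2RK}. I do not expect a genuine obstacle here: the substantive work has already been carried out in Theorem \ref{Dirichlet} and Corollaries \ref{Max-mod} and \ref{Linear}, and the only points needing a moment of care are the measurability and integrability of $\hat u$ against the complex measure $\nu$ and the bookkeeping of passing between the real and complex scalar fields, both of which are routine.
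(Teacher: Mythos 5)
Your proposal is correct and follows essentially the same route as the paper: define the functional $u \mapsto \int_{\overline{\mathbb{A}}_r^m} \hat{u}\, d\nu$, establish linearity via Corollary \ref{Linear} and boundedness via Corollary \ref{Max-mod}, and conclude with the Riesz--Kakutani theorem. Your explicit treatment of the complexification and of uniqueness merely fills in details the paper leaves implicit.
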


	\begin{proof}
		The map $\Lambda :C_\mathbb{R}(b\mathbb{A}_r^m)\to \mathbb{C}$ given by
		\begin{equation*}
			\Lambda(u):=\underset{\overline{\mathbb{A}}_r^m}{\int}\hat{u} \ d\nu
		\end{equation*}
		is a bounded linear functional. Indeed, the linearity of $\Lambda$ follows from Corollary \ref{Linear} and the boundedness of $\Lambda$ is obtained by an application of Corollary \ref{Max-mod}. So, we have that
		\begin{equation*}
			\begin{split}
				|\Lambda(u)|
				& \leq \|\hat{u}\|_{\infty, \overline{\mathbb{A}}_r^m}|\nu(\overline{\mathbb{A}}_r^m)| \\
		& \leq \|u\|_{\infty, b\mathbb{A}_r^m}\|\nu\|.
			\end{split}
		\end{equation*}
		Consequently, we have that $\|\Lambda\| \leq \|\nu\|.$ The desired conclusion follows from the Riesz-Kakutani representation theorem (see Theorem \ref{thm:2RK}).
		
	\end{proof}

	\section{Dilation of normal $\Ar$-contractions and doubly commuting $2\times 2$ scalar $\Ar$-contractions }\label{Normal}
	
	\vspace{0.3cm}
	\noindent In this Section, we present the main results of this paper. We show that any commuting normal $\Ar$-contractions $T_1, \dots , T_m$ simultaneously dilate to commuting $\Ar$-unitaries $U_1, \dots , U_m$. Lemma \ref{Polyannulus-each annulus} tells us that $(U_1, \dots , U_m)$ is an $\Arm$-unitary if and only if each $U_j$ is an $\Ar$-unitary. Therefore, it follows that $f(T_1, \dots , T_m)$ is a compression of $f(U_1, \dots , U_m)$ for every $f$ in $Rat \, (\ov{\mathbb A}_r^m)$ and consequently $\ov{\mathbb A}_r^m$ is a complete spectral set for $(T_1, \dots , T_m)$ by Arveson's theorem (Theorem \ref{thm:W.Arveson}). Therefore, $\ov{\mathbb A}_r^m$ is a spectral set for $(T_1, \dots , T_m)$ and naturally von Neumann's inequality holds on $\ov{\mathbb A}_r^m$, i.e.
	\[
	\|f(T_1, \dots , T_n)\| \leq \sup_{z\in \ov{\mathbb A}_r^m} \; |f(z)| = \|f\|_{\infty, \ov{\mathbb A}_r^m} \, ,
	\]
	for every $f\in Rat\,(\ov{\mathbb A}_r^m)$. We further show that such a dilation is possible when $(T_1, \dots , T_m)$ is a subnormal $\Arm$-contraction or when $T_1, \dots , T_n$ are doubly commuting $2 \times 2$ scalar $\Ar$-contractions. We start with a lemma whose proof follows directly from the properties of Taylor joint spectrum.
	\begin{lem}
		An $m$-tuple $\underline{N}=(N_1, \dotsc, N_m)$ of commuting normal operators is an $\mathbb{A}_r^m$-contraction if and only if each $N_j$ is a normal $\mathbb{A}_r$-contraction.
	\end{lem}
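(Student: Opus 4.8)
The plan is to prove both implications by reducing everything to the joint spectral theory of commuting normal operators. Two standing facts will be used. First, the projection property of the Taylor joint spectrum (a consequence of its spectral mapping theorem): for any commuting tuple $\underline{T}=(T_1,\dots,T_m)$ one has $\pi_j(\sigma_T(\underline{T}))=\sigma(T_j)$, where $\pi_j$ is the $j$-th coordinate projection, and in particular $\sigma_T(\underline{T})\subseteq\sigma(T_1)\times\dots\times\sigma(T_m)$. Second, by the spectral theorem a commuting tuple of normal operators $\underline{N}$ carries a joint spectral measure $E$ on $\mathbb{C}^m$ with $\mathrm{supp}(E)=\sigma_T(\underline{N})$, and its functional calculus is implemented by integration against $E$.

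For the forward implication, suppose $\overline{\mathbb{A}}_r^m$ is a spectral set for $\underline{N}=(N_1,\dots,N_m)$. Then $\sigma_T(\underline{N})\subseteq\overline{\mathbb{A}}_r^m$, so the projection property gives $\sigma(N_j)=\pi_j(\sigma_T(\underline{N}))\subseteq\overline{\mathbb{A}}_r$ for each $j$. To transfer von Neumann's inequality, given $g\in Rat(\overline{\mathbb{A}}_r)$ I set $f(z_1,\dots,z_m):=g(z_j)$; this lies in $Rat(\overline{\mathbb{A}}_r^m)$, its denominator is invertible at $\underline{N}$ because $\sigma_T(\underline{N})\subseteq\overline{\mathbb{A}}_r^m$, and $f(\underline{N})=g(N_j)$. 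Hence
\[
\|g(N_j)\|=\|f(\underline{N})\|\le\|f\|_{\infty,\,\overline{\mathbb{A}}_r^m}=\|g\|_{\infty,\,\overline{\mathbb{A}}_r},
\]
so $\overline{\mathbb{A}}_r$ is a spectral set for the normal operator $N_j$, i.e. $N_j$ is a normal $\mathbb{A}_r$-contraction.

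For the converse, assume each $N_j$ is a normal $\mathbb{A}_r$-contraction, so $\sigma(N_j)\subseteq\overline{\mathbb{A}}_r$; then $\sigma_T(\underline{N})\subseteq\sigma(N_1)\times\dots\times\sigma(N_m)\subseteq\overline{\mathbb{A}}_r^m$. Let $E$ be the joint spectral measure of $\underline{N}$. For $f=p/q\in Rat(\overline{\mathbb{A}}_r^m)$ the polynomial $q$ has no zeros on $\mathrm{supp}(E)=\sigma_T(\underline{N})$, so $q(\underline{N})^{-1}=\int q^{-1}\,dE$ and, multiplying by $p(\underline{N})=\int p\,dE$, one gets $f(\underline{N})=\int f\,dE$. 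Therefore
\[
\|f(\underline{N})\|=\sup_{z\in\sigma_T(\underline{N})}|f(z)|\le\sup_{z\in\overline{\mathbb{A}}_r^m}|f(z)|=\|f\|_{\infty,\,\overline{\mathbb{A}}_r^m},
\]
which, together with $\sigma_T(\underline{N})\subseteq\overline{\mathbb{A}}_r^m$, shows that $\overline{\mathbb{A}}_r^m$ is a spectral set for $\underline{N}$. The argument contains no real obstruction; the one point that needs care is the identity $f(\underline{N})=\int f\,dE$ for rational $f$, which is what upgrades the von Neumann bound to the equality $\|f(\underline{N})\|=\sup_{\sigma_T(\underline{N})}|f|$, and this uses precisely that the pole variety of $f$ misses $\overline{\mathbb{A}}_r^m\supseteq\mathrm{supp}(E)$ together with the normality of the tuple, which is what makes the spectral-measure representation available.
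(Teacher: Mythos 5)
Your proof is correct and is exactly the argument the paper has in mind: the paper omits the proof entirely, stating only that it ``follows directly from the properties of Taylor joint spectrum,'' and your write-up (projection/spectral-mapping property in one direction, the product containment of $\sigma_T$ plus the joint spectral measure and $\|f(\underline{N})\|=\sup_{\sigma_T(\underline{N})}|f|$ in the other) is the standard fleshing-out of that assertion. No gaps.
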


	\begin{thm}\label{main}
		Every normal $\mathbb{A}_r^m$-contraction dilates to an $\mathbb A_r^m$-unitary.
	\end{thm}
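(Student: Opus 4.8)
The plan is to build the dilation from the solution of the Dirichlet problem for $\mathbb{A}_r^m$ (Theorem \ref{Dirichlet}) together with Naimark's dilation theorem (Theorem \ref{Naimark}). Let $\underline{N}=(N_1,\dots,N_m)$ be a normal $\mathbb{A}_r^m$-contraction on a Hilbert space $\HS$. Then $N_1,\dots,N_m$ are commuting normal operators with $\sigma_T(\underline{N})\subseteq\overline{\mathbb{A}}_r^m$, so the spectral theorem supplies a regular positive spectral $\mathcal B(\HS)$-valued measure $E$ on $\overline{\mathbb{A}}_r^m$ with $\underline{N}^k=\int_{\overline{\mathbb{A}}_r^m}z^k\,dE(z)$ for all $k\in\mathbb{Z}^m$; note that each $N_j$ is invertible because $0\notin\overline{\mathbb{A}}_r$, so negative powers are meaningful.

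The first step is to transport $E$ to the distinguished boundary. Given $u\in C_\mathbb{R}(b\mathbb{A}_r^m)$, let $\hat u\in C_\mathbb{R}(\overline{\mathbb{A}}_r^m)$ be its strongly harmonic extension from Theorem \ref{Dirichlet}, extend $u\mapsto\hat u$ complex-linearly to $C(b\mathbb{A}_r^m)$, and define $\phi:C(b\mathbb{A}_r^m)\to\mathcal B(\HS)$ by $\phi(u)=\int_{\overline{\mathbb{A}}_r^m}\hat u\,dE$. This map is linear by Corollary \ref{Linear}, unital since $\hat 1\equiv 1$, and bounded with $\|\phi(u)\|\le\|\hat u\|_{\infty,\overline{\mathbb{A}}_r^m}=\|u\|_{\infty,b\mathbb{A}_r^m}$ by Corollary \ref{Max-mod}. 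It is also positive: if $u\ge 0$ on $b\mathbb{A}_r^m$, then applying the minimum principle for harmonic functions one variable at a time (precisely the iteration used to prove Corollary \ref{Max-mod}, with the maximum principle replaced by the minimum principle) yields $\hat u\ge 0$ on $\overline{\mathbb{A}}_r^m$, so $\phi(u)=\int\hat u\,dE\ge 0$. By the one-to-one correspondence between bounded linear maps on $C(b\mathbb{A}_r^m)$ and regular bounded $\mathcal B(\HS)$-valued measures recalled in Section \ref{basic} (equivalently, by applying Proposition \ref{measure} to each scalar measure $\langle E(\cdot)x,y\rangle$ and assembling the results into a bounded sesquilinear form), there is a regular positive $\mathcal B(\HS)$-valued measure $\tilde E$ on $b\mathbb{A}_r^m$ with $\phi=\phi_{\tilde E}$.

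The key observation is that $\phi_{\tilde E}$ remembers every moment of $\underline{N}$. For $k\in\mathbb{Z}^m$ the monomial $z\mapsto z^k$ is holomorphic in each variable on $\mathbb{A}_r^m$ (since $z_j\mapsto z_j^{k_j}$ is holomorphic on $\mathbb{A}_r$ for each $k_j\in\mathbb{Z}$), hence strongly harmonic, continuous on $\overline{\mathbb{A}}_r^m$, and equal to $\zeta^k$ on $b\mathbb{A}_r^m$; the uniqueness in Theorem \ref{Dirichlet} therefore gives $\widehat{\zeta^k}=z^k$, whence
\[
\phi_{\tilde E}(\zeta^k)=\int_{\overline{\mathbb{A}}_r^m}z^k\,dE(z)=\underline{N}^k,\qquad k\in\mathbb{Z}^m.
\]
Now I would apply Naimark's theorem (Theorem \ref{Naimark}): since $\tilde E$ is regular and positive and $\phi_{\tilde E}$ is unital, there are a Hilbert space $\mathcal K\supseteq\HS$, an isometry $V:\HS\to\mathcal K$, and a regular spectral $\mathcal B(\mathcal K)$-valued measure $F$ on $b\mathbb{A}_r^m=(\partial\mathbb{A}_r)^m$ with $\tilde E(S)=V^*F(S)V$. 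Put $U_j=\int_{b\mathbb{A}_r^m}\zeta_j\,dF(\zeta)$ for $1\le j\le m$. These are commuting normal operators (bounded Borel functions of the single spectral measure $F$), and $\sigma(U_j)$ is contained in the $j$-th coordinate projection of $(\partial\mathbb{A}_r)^m$, namely $\partial\mathbb{A}_r$, so each $U_j$ is an $\mathbb{A}_r$-unitary; by Lemma \ref{Polyannulus-each annulus}, $\underline{U}=(U_1,\dots,U_m)$ is an $\mathbb{A}_r^m$-unitary, and in particular an $\mathbb{A}_r^m$-contraction (a commuting normal tuple with joint spectrum in $\overline{\mathbb{A}}_r^m$ has $\overline{\mathbb{A}}_r^m$ as a spectral set). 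Since $\zeta\mapsto\zeta^k$ is continuous on $b\mathbb{A}_r^m$, approximating it by simple functions and using $V^*F(S)V=\tilde E(S)$ gives, for every $k\in\mathbb{Z}^m$,
\[
P_\HS\,\underline{U}^k\big|_\HS=V^*\left(\int_{b\mathbb{A}_r^m}\zeta^k\,dF\right)V=\int_{b\mathbb{A}_r^m}\zeta^k\,d\tilde E=\phi_{\tilde E}(\zeta^k)=\underline{N}^k.
\]
Finally, Proposition \ref{Power} upgrades this equality of moments to $f(\underline{N})=P_\HS f(\underline{U})\big|_\HS$ for all $f\in Rat(\overline{\mathbb{A}}_r^m)$, so $\underline{U}$ is the desired $\mathbb{A}_r^m$-unitary dilation of $\underline{N}$.

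The heart of the matter is the Dirichlet problem, which is already settled in Theorem \ref{Dirichlet}; given it, the step I expect to require the most care is the passage from the scalar Dirichlet solution to a genuine \emph{operator-valued} measure on $b\mathbb{A}_r^m$ — verifying that $\tilde E$ is weakly countably additive, regular and positive (positivity resting on the variable-by-variable minimum principle), and that $\phi_{\tilde E}$ is unital, which is exactly what forces Naimark's $V$ to be an isometry and hence yields an honest dilation rather than merely a completely positive map.
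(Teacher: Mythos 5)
Your proposal is correct and follows essentially the same route as the paper: transport the spectral measure of $\underline{N}$ to $b\mathbb{A}_r^m$ via the harmonic extension from Theorem \ref{Dirichlet}, apply Naimark's theorem, and recover all moments $\underline{N}^k$ using $\widehat{\zeta^k}=z^k$ together with Proposition \ref{Power}. The only difference is cosmetic — you package the boundary measure through the unital positive map $\phi$ on $C(b\mathbb{A}_r^m)$ rather than through the scalar measures $\hat{\nu}_{x,y}$ and sesquilinear forms, and in doing so you supply the minimum-principle argument for positivity that the paper leaves as ``evident.''
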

	\begin{proof}
		Let $\underline{N}=(N_1, \dotsc, N_m)$ be a normal $\mathbb A_r^m$-contraction acting on a Hilbert space $\mathcal{H}.$ Then there exists a unique spectral measure $E$ relative to $(\overline{\mathbb{A}}_r^m, \mathcal{H})$ such that 
		\begin{equation*}
			N_j=\underset{\overline{\mathbb{A}}_r^m}{\int}z_jdE(z) \ \ \text{for all} \ \ j=1,2,\dotsc, m,
		\end{equation*}
		where $z_j$ denotes the projection onto the $j$-th coordinate. For $x, y \in \mathcal{H}$, define a regular complex Borel measure $\nu_{x,y}$ on $\overline{\mathbb{A}}_r^m$ by $\nu_{x,y}(S)=\langle E(S)x,y\rangle.$ It follows from Proposition \ref{measure} that there is a unique regular complex Borel measure $\hat{\nu}_{x,y}$ on $b\mathbb{A}_r^m$ such that 
		\[
		\underset{\overline{\mathbb{A}}_r^m}{\int}\hat{u} \ d\nu_{x, y}=\underset{b\mathbb{A}_r^m}{\int}u \ d\hat{\nu}_{x,y} \ \ \bigg(u \in C_\mathbb{R}(b\mathbb{A}_r^m)\bigg).  
		\]
		If $\Delta$ is a Borel set in $b\mathbb{A}_r^m$, then the map 
		$
		B_\Delta(x,y)=\hat{\nu}_{x,y}(\Delta)
		$
		defines a bounded sesquilinear form on $\mathcal{H}.$ By Reisz representation theorem, we have that there is a unique operator $A(\Delta) \in B(\mathcal{H})$ such that $\hat{\nu}_{x,y}(\Delta)=\langle A(\Delta)x, y\rangle $ for every $x, y \in \mathcal{H}.$ Again, it is evident that the map 
		\[
		A: \Delta \mapsto A(\Delta), \ \Delta \overset{Borel}{\subseteq} b\mathbb{A}_r^m,
		\]
		is a regular positive operator valued measure. Then by Theorem \ref{Naimark}, there is a space $\mathcal{K},$ an isometry $V:\mathcal{H} \to \mathcal{K}$ and a regular self-adjoint spectral measure $F$ on $b\mathbb{A}_r^m$ such that 
		\[
		A(\Delta) =V^*F(\Delta)F
		\]
		for all Borel sets $\Delta \subseteq b\mathbb{A}_r^m.$ Consequently $\hat{\nu}_{x,y}=F_{Vx, Vy}\,,$ because
		\[
		\hat{\nu}_{x,y}(\Delta)=\langle A(\Delta)x, y\rangle =\langle V^*F(\Delta)Vx, y\rangle=\langle F(\Delta)Vx, Vy\rangle =F_{Vx, Vy}(\Delta)
		\]
		for all Borel sets $\Delta \subseteq b\mathbb{A}_r^m.$
		Now for each $j=1, \dots , m$, set $U_j: \mathcal{K} \to \mathcal{K}$ to be the operator
		\[
		\langle U_jx,y \rangle := \underset{b\mathbb{A}_r^m}{\int}z_j\,dF_{x,y} \,.
		\]
		It follows from the spectral theorem that $\underline{U}=(U_1, \dotsc, U_m)$ is a commuting tuple of normal operators with $\sigma_T(U_1, \dots , U_m)\subseteq b\mathbb A_r^m$. Thus, $\underline{U}$ is an $\mathbb A_r^m$-unitary. Now we show that the tuple $\underline{U}$ is indeed our desired normal $b\mathbb A_r^m$-dilation. For $(k_1, \dotsc, k_m) \in \mathbb{Z}^m,$ let us define a function $g$ on $\overline{\mathbb{A}}_r^m$ by $g(z_1, \dotsc, z_m)=z_1^{k_1}\dotsc z_m^{k_m}$ which is evidently analytic. If we write $g=f_1+if_2$, where $f_1,f_2$ are real-valued, then it follows from Theorem \ref{Dirichlet} that $f_1=\hat f_1$ and $f_2=\hat f_2$. Thus, $\hat g =\hat f_1 +i \hat f_2=g$. So, we have that
		\begin{equation*}
			\begin{split}
				\langle (N_1^{k_1}\dotsc N_m^{k_m})x,y\rangle &=
				\langle g(N_1, \dotsc, N_m)x,y\rangle \\  
				&=\underset{\overline{\mathbb{A}}_r^m}{\int}g \ d\nu_{x,y}\\
				&=\underset{b\mathbb{A}_r^m}{\int}\hat{g} \ d\hat{\nu}_{x,y} \quad \quad \; \quad [\text{ by Proposition }\ref{measure}]\\
				&=\underset{b\mathbb{A}_r^m}{\int} g \ d\hat{\nu}_{x,y} \quad \quad \; \quad \mbox{[since $g=\hat{g}$]}\\
				&=\underset{b\mathbb{A}_r^m}{\int} g \ dF_{Vx,Vy}\\
				&=\langle g(\underline{U})Vx, Vy\rangle \quad \quad \mbox{[since $F$ is a spectral measure]}\\ 
				&= \langle (U_1^{k_1} \dotsc U_m^{k_m})Vx,Vy\rangle.\\
			\end{split}
		\end{equation*}
		Hence we have that $\underline{N}^k=V^*\underline{U}^kV$ for every $k\in \mathbb{Z}^m.$ So, it follows from Proposition \ref{Power} that $$f(\underline{N})x=V^*f(\underline{U})Vx$$ for every $x\in \HS$ and for every rational function $f=p\slash q$ such that $q$ does not have any zero in $\overline{\mathbb{A}}_r^m$. The proof is now complete.
	\end{proof}

	\noindent The preceding dilation theorem holds even for a wider class of $\Arm$-contractions. Indeed, below we show that a subnormal $\Arm$-contraction can have a normal $b\Arm$-dilation. Recall that a commuting $m$-tuple of operators $\underline{S}=(S_1, \dotsc, S_m)$ acting on a Hilbert space $\mathcal{H}$ is said to be \textit{subnormal} if there exist a Hilbert space $\mathcal{K} \supseteq \mathcal{H}$ and commuting normal operators  $N_1, \dotsc, N_m$ in $B(\mathcal{K})$ such that $\HS$ is a joint invariant subspace of $(N_1, \dots , N_m)$ and that $N_j|\mathcal{H}=S_j$ for $1 \leq j \leq m$. Among all such commuting normal extensions of the commuting tuple $\underline{S}$, there is a minimal one, which is unique upto unitary equivalence. Thus, an $\Arm$-contraction $\underline{S}=(S_1, \dotsc, S_m)$ is said to be a \textit{subnormal $\mathbb{A}_r^m$-contraction} if the $m$-tuple $\underline{S}$ is subnormal.
	
	\begin{thm}\label{subnormal}
		Every subnormal $\mathbb{A}_r^m$-contraction admits a normal $b\mathbb A_r^m$-dilation.
	\end{thm}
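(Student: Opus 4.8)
The plan is to obtain the required $b\Arm$-dilation by composing two dilations: first pass from the subnormal $\Arm$-contraction to its minimal normal extension, and then apply Theorem \ref{main} to that normal tuple. So let $\underline{S}=(S_1,\dots,S_m)$ be a subnormal $\Arm$-contraction on $\HS$ and let $\underline{N}=(N_1,\dots,N_m)$ on $\mathcal{K}\supseteq\HS$ be its minimal normal extension, so that $\HS$ is invariant under each $N_j$ and $N_j|_{\HS}=S_j$. The crucial first claim is that $\underline{N}$ is again a normal $\Arm$-contraction. Since $\overline{\mathbb{A}}_r^m$ is a spectral set for $\underline{S}$, the projection property of the Taylor joint spectrum gives $\sigma(S_j)\subseteq\overline{\mathbb{A}}_r$ for each $j$, so $\sigma_T(\underline{S})\subseteq\overline{\mathbb{A}}_r^m$; invoking the spectral inclusion property of the minimal normal extension of a subnormal tuple (the Taylor joint spectrum of $\underline{N}$ is contained in that of $\underline{S}$), we get $\sigma_T(\underline{N})\subseteq\sigma_T(\underline{S})\subseteq\overline{\mathbb{A}}_r^m$. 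Since commuting normal operators automatically satisfy von Neumann's inequality on any compact set containing their joint spectrum (by the spectral theorem), $\overline{\mathbb{A}}_r^m$ is a complete spectral set for $\underline{N}$, i.e.\ $\underline{N}$ is a normal $\Arm$-contraction; equivalently this follows from the lemma stated just before Theorem \ref{main}.

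Now Theorem \ref{main} furnishes an $\Arm$-unitary $\underline{U}=(U_1,\dots,U_m)$ on a Hilbert space $\mathcal{L}\supseteq\mathcal{K}$ with $f(\underline{N})=P_{\mathcal{K}}f(\underline{U})|_{\mathcal{K}}$ for every $f\in Rat(\overline{\mathbb{A}}_r^m)$; by Proposition \ref{Power} this is equivalent to $\underline{N}^k=P_{\mathcal{K}}\underline{U}^k|_{\mathcal{K}}$ for all $k\in\mathbb{Z}^m$. To push this dilation down to $\HS$, I first note that $\HS$ is invariant not only under each $N_j$ but also under each $N_j^{-1}$: indeed, each $S_j$ has spectrum in $\overline{\mathbb{A}}_r$, hence is invertible in $\mathcal{B}(\HS)$, so $S_j(\HS)=\HS$; thus for $x\in\HS$ we may write $x=S_jy=N_jy$ with $y\in\HS$, which gives $N_j^{-1}x=y\in\HS$ and $N_j^{-1}|_{\HS}=S_j^{-1}$. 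Since the $N_j$ commute, it follows that $\HS$ is invariant under $\underline{N}^k$ for every $k\in\mathbb{Z}^m$ and that $\underline{N}^k|_{\HS}=\underline{S}^k$.

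Combining these observations, for every $x\in\HS$ and $k\in\mathbb{Z}^m$ we have
\[
\underline{S}^k x=\underline{N}^k x=P_{\mathcal{K}}\underline{U}^k x,
\]
and applying $P_{\HS}$ (using $\HS\subseteq\mathcal{K}$ and $\underline{S}^k x\in\HS$) yields $\underline{S}^k x=P_{\HS}\underline{U}^k x$. As $\underline{U}$ is in particular an $\Arm$-contraction, Proposition \ref{Power} upgrades this to $f(\underline{S})x=P_{\HS}f(\underline{U})x$ for all $f\in Rat(\overline{\mathbb{A}}_r^m)$ and $x\in\HS$, so $\underline{U}$ is the desired normal $b\Arm$-dilation of $\underline{S}$.

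The step I expect to be the main obstacle is the first claim, namely that the minimal normal extension $\underline{N}$ is again a normal $\Arm$-contraction, i.e.\ that its Taylor joint spectrum does not leave $\overline{\mathbb{A}}_r^m$. This is where the rational convexity of $\overline{\mathbb{A}}_r^m$ and the spectral inclusion property of minimal normal extensions of subnormal tuples are essential; once that containment is secured, the remainder is a routine composition of the dilation coming from subnormality with the one coming from Theorem \ref{main}.
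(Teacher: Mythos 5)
Your proposal is correct and follows essentially the same route as the paper: pass to the minimal normal extension, use Putinar's spectral inclusion theorem to see it is a normal $\mathbb{A}_r^m$-contraction, exploit invertibility of the $S_j$ (since $0\notin\overline{\mathbb{A}}_r$) to get $\underline{S}^k=\underline{N}^k|_{\mathcal{H}}$ for all $k\in\mathbb{Z}^m$, and then compose with the dilation from Theorem \ref{main} via Proposition \ref{Power}. Your write-up is just slightly more explicit about the invariance of $\mathcal{H}$ under $N_j^{-1}$ and the composition of projections.
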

	
	\begin{proof}
		Let $\underline{S}=(S_1, \dotsc, S_m)$ be a subnormal $\mathbb{A}_r^m$-contraction on a Hilbert space $\HS$ and let $\underline{N}=(N_1, \dotsc, N_m)$ be its minimal normal extension on the minimal space $\mathcal{K} \supseteq \mathcal{H}$. The spectral inclusion theorem for commuting tuple of subnormal operators \cite{Putinar} yields that
	\[
		\sigma_T(\underline{N}) \subseteq \sigma_T(\underline{S}) \subseteq \overline{\mathbb{A}}_r^m.
	\]
		This shows that the $m$-tuple $\underline{N}$ is an $\mathbb{A}_r^m$-contraction. Since each $S_j$ is an invertible operator by being an $\Ar$-contraction and $N_j|\mathcal{H}=S_j$, it follows that the space $\mathcal{H}$ is invariant under $N_j^{-1}$ too. Hence, $S_j^{-1}=N_j^{-1}|\mathcal{H}$ for $1 \leq j \leq m$. Consequently, for any $k=(k_1, \dotsc, k_m) \in \mathbb{Z}^m,$ we have
		\[
		S_1^{k_1}\dotsc S_m^{k_m}=N_1^{k_1}\dotsc N_m^{k_m}|\mathcal{H}.
		\]
		Since $(N_1, \dots , N_m)$ dilates to an $\mathbb A_r^m$-unitary by Theorem \ref{main}, the desired conclusion follows. 
	\end{proof}
	
	It is merely said that if $S_1, \dots , S_m$ are commuting subnormal operators acting on a Hilbert space $\HS$, then the tuple $(S_1, \dots , S_m)$ may not be a subnormal tuple. Thus, if $S_1, \dots , S_m$ are commuting subnormal $\Ar$-contractions, then $(S_1, \dots , S_m)$ may not be a subnormal $\Arm$-contraction. Naturally, $S_1, \dots , S_m$ may not dilate simultaneously to commuting $\Ar$-unitaries $U_1, \dots , U_m$ acting on a bigger Hilbert space. This is because, if they do then $(S_1, \dots ,S_m)$ dilates to the $\Arm$-unitary $(U_1, \dots , U_m)$ and once again by Arveson's theorem (Theorem \ref{thm:W.Arveson}), $\ov{\mathbb A}_r^m$ is a complete spectral set for $(S_1, \dots ,S_m)$, which further implies that $\ov{\mathbb A}_r^m$ is a spectral set for $(S_1, \dots , S_m)$, a contradiction. Interestingly, such a dilation exists if we consider doubly commuting subnormal $\Ar$-contractions $S_1, \dots , S_m$.
	\begin{thm} \label{subnormal1}
		Let, $S_1, \dotsc, S_m$ be doubly commuting subnormal $\mathbb{A}_r$-contractions on $\mathcal{H}$. Then the $m$-tupe $\underline{S}=(S_1, \dotsc, S_m)$ possesses a normal $b\Arm$-dilation. 
	\end{thm}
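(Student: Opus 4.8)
The plan is to deduce the result from Theorem \ref{subnormal} by showing that, under the double commutativity hypothesis, $\underline{S}=(S_1,\dots,S_m)$ is in fact a subnormal $\Arm$-contraction. The substantive step is to upgrade the individual subnormality of the $S_j$ to joint subnormality of the tuple, and this is exactly where double commutativity is needed, since a merely commuting tuple of subnormal operators need not be jointly subnormal. I would invoke here the classical theorem of It\^{o} on commutative families of subnormal operators, which asserts that a finite family of doubly commuting subnormal operators is jointly subnormal. Applying it to $S_1,\dots,S_m$, the tuple $\underline{S}$ is subnormal; let $\underline{N}=(N_1,\dots,N_m)$ on $\mathcal K\supseteq\HS$ denote its minimal normal extension, so that $\HS$ is invariant under each $N_j$ and $N_j|_{\HS}=S_j$.

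Next I would pin down the spectra. Since each $S_j$ is an $\Ar$-contraction, $\sigma(S_j)\subseteq\ov{\mathbb A}_r$, and the projection property of the Taylor joint spectrum gives $\sigma_T(\underline{S})\subseteq\sigma(S_1)\times\cdots\times\sigma(S_m)\subseteq\ov{\mathbb A}_r^m$; the spectral inclusion theorem for commuting tuples of subnormal operators \cite{Putinar} then yields $\sigma_T(\underline{N})\subseteq\sigma_T(\underline{S})\subseteq\ov{\mathbb A}_r^m$. Because $0\notin\ov{\mathbb A}_r$, each $S_j$ and each $N_j$ is invertible, and since $N_j|_{\HS}=S_j$ it follows that $\HS$ is also invariant under $N_j^{-1}$ with $N_j^{-1}|_{\HS}=S_j^{-1}$; hence $\HS$ is invariant under $\underline{N}^k$ and $\underline{S}^k=\underline{N}^k|_{\HS}$ for every $k\in\mathbb{Z}^m$. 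Given $f\in Rat(\ov{\mathbb A}_r^m)$, Proposition \ref{prop:503} supplies the Laurent expansion $f(z)=\sum_{k\in\mathbb{Z}^m}f_kz^k$, and since $\underline{N}$ is an $\Arm$-contraction, Proposition \ref{Functional calculus} shows that $f(\underline{S})=\sum_k f_k\underline{S}^k$ and $f(\underline{N})=\sum_k f_k\underline{N}^k$ converge in operator norm; the partial sums of the latter leave $\HS$ invariant and restrict to the partial sums of the former, so passing to the limit gives $f(\underline{S})=f(\underline{N})|_{\HS}$. Therefore
\[
\|f(\underline{S})\|\le\|f(\underline{N})\|=\sup_{z\in\sigma_T(\underline{N})}|f(z)|\le\|f\|_{\infty,\,\ov{\mathbb A}_r^m},
\]
the middle equality being the spectral theorem for the normal tuple $\underline{N}$. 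Thus $\ov{\mathbb A}_r^m$ is a spectral set for $\underline{S}$, and combined with joint subnormality this says precisely that $\underline{S}$ is a subnormal $\Arm$-contraction.

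It now remains only to invoke Theorem \ref{subnormal}, which produces a normal $b\Arm$-dilation of $\underline{S}$; by Lemma \ref{Polyannulus-each annulus} this is equivalent to a simultaneous dilation of $S_1,\dots,S_m$ to commuting $\Ar$-unitaries, which is the assertion. (One may note in passing that the computation above shows $\ov{\mathbb A}_r^m$ is even a complete spectral set for $\underline{S}$, so that Arveson's theorem (Theorem \ref{thm:W.Arveson}) could be applied directly in place of Theorem \ref{subnormal}.)

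The main obstacle is the first step, namely the passage from subnormality of each $S_j$ to joint subnormality of $\underline{S}$; everything afterwards is routine spectral bookkeeping built on Section \ref{Analytic} and Theorem \ref{subnormal}. If one prefers a self-contained treatment rather than citing It\^{o}'s theorem, the joint normal extension can be built inductively: given a commuting normal extension of $(S_1,\dots,S_{m-1})$, the double commutativity of $S_m$ with $S_1,\dots,S_{m-1}$ is exactly what allows $S_m$ to be extended normally over the enlarged space while preserving commutativity with the previously constructed normal operators — which is the mechanism underlying It\^{o}'s theorem.
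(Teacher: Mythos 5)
Your proof is correct and follows essentially the same route as the paper's: joint subnormality of the doubly commuting tuple (It\^{o}'s theorem, which the paper cites via Conway), Putinar's spectral inclusion plus the projection property to see that the minimal normal extension $\underline{N}$ is a normal $\Arm$-contraction, invertibility to get $\underline{S}^k=\underline{N}^k|_{\mathcal H}$ for all $k\in\mathbb{Z}^m$, and then the dilation of $\underline{N}$ from Theorem \ref{main}. The only cosmetic difference is that you first verify explicitly that $\ov{\mathbb A}_r^m$ is a spectral set for $\underline{S}$ so as to invoke Theorem \ref{subnormal}, whereas the paper skips that step and applies Theorem \ref{main} together with Proposition \ref{Power} directly.
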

	\begin{proof}
	
		We have from the literature (e.g. \cite{Conway}) that any doubly commuting $m$-tuple of contractions $\underline{S}=(S_1, \dotsc, S_m)$ is a subnormal tuple. Let $\underline{N}=(N_1, \dotsc, N_m)$ on $\mathcal{K} \supseteq \mathcal{H}$ be the minimal normal extension of $(S_1, \dots , S_m)$. The spectral inclusion theorem for commuting subnormal operators \cite{Putinar} and properties of Taylor-joint spectrum imply that
		\[
		\sigma_T(\underline{N}) \subseteq \sigma_T(\underline{S}) \subseteq \sigma(S_1) \times \dotsc \times \sigma(S_m) \subseteq \overline{\mathbb{A}}_r^m.
		\]
		Thus, $\underline{N}$ is an $\mathbb{A}_r^m$-contraction. By an argument similar to that given in the proof of Theorem \ref{subnormal}, we have for any $k=(k_1, \dotsc, k_m) \in \mathbb{Z}^m$,
		\[
		S_1^{k_1} \dotsc S_m^{k_m}=N_1^{k_1} \dotsc N_m^{k_m}|\mathcal{H}.
		\]
		The rest of the proof follows from Theorem \ref{main}.

	\end{proof}
	
	\subsection{Dilation of doubly commuting $2 \times 2$ scalar matrices associated with an annulus}\label{2*2}
	We have found simultaneous normal $b\Ar$-dilation for commuting normal $\Ar$-contractions. A natural question arises, what happens if we consider doubly commuting $\Ar$-contractions $T_1, \dots , T_m$ instead. Needless to mention that a commuting tuple of normal operators is always doubly commuting. Thus, our question is seeking a generalization if we can find simultaneous $\Ar$-unitary dilation for doubly commuting $\Ar$-contractions. In this Subsection, we settle this issue at least for a doubly commuting $m$-tuple of $2 \times 2$ scalar $\mathbb{A}_r$-contractions. Evidently then $\ov{\mathbb A}_r^m$ becomes a complete spectral set for such an $m$-tuple and hence a spectral set too. Before going to that, we show a canonical way of constructing a commuting family of $2 \times 2$ scalar $\Ar$-contractions that are all upper triangular matrices. The following result from \cite{Misra} plays the central role in the construction. However, we are able to find dilation for $2 \times 2$ doubly commuting $\Ar$-contractions only.
	
	\begin{thm}[\cite{Misra}, Corollary $1.1'$]\label{Misra_cor} For any $w \in \mathbb{A}_r \,,$ the set $\overline{\mathbb{A}}_r$ is a spectral set for $\begin{bmatrix}
			w & h(w) \\
			0 & w 
		\end{bmatrix} $ if and only if  
		$
		|h(w)| \leq \hat{K}_r(\overline{w},w)^{-1}
		$, where $h:{\Ar} \rightarrow \C$ is any function.
	\end{thm}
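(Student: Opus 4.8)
The plan is to reduce the spectral-set requirement to a single scalar estimate at $w$ through the holomorphic functional calculus, to recognize that estimate as the infinitesimal Carath\'{e}odory inequality for $\mathbb{A}_r$, and then to invoke its reproducing-kernel form. Write $T=\begin{bmatrix} w & h(w)\\ 0 & w\end{bmatrix}$. Since $\sigma(T)=\{w\}\subset\mathbb{A}_r$ and every $f\in Rat(\overline{\mathbb{A}}_r)$ is holomorphic on a neighborhood of $w$, the holomorphic functional calculus gives $f(T)=\begin{bmatrix} f(w) & h(w)f'(w)\\ 0 & f(w)\end{bmatrix}$; hence $\overline{\mathbb{A}}_r$ is a spectral set for $T$ exactly when $\|f(T)\|\leq 1$ for every $f\in Rat(\overline{\mathbb{A}}_r)$ with $\|f\|_{\infty,\overline{\mathbb{A}}_r}\leq 1$. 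A direct computation of $I_2-f(T)^{*}f(T)$, whose determinant equals $(1-|f(w)|^{2})^{2}-|h(w)f'(w)|^{2}$, shows that $\|f(T)\|\leq 1$ precisely when $|h(w)f'(w)|\leq 1-|f(w)|^{2}$ (the estimate $|f(w)|\leq 1$ being automatic). Since $w$ is interior, $|f(w)|=\|f\|_{\infty}=1$ forces $f$ to be constant by the maximum modulus principle, so $f'(w)=0$ and the estimate is vacuous there; restricting to $|f(w)|<1$, the spectral-set condition reads $|h(w)|\,|f'(w)|\leq 1-|f(w)|^{2}$ for every $f$ in the closed unit ball of $Rat(\overline{\mathbb{A}}_r)$, that is,
\[
|h(w)|^{-1}\ \geq\ \Lambda_r(w)\ :=\ \sup\left\{\frac{|f'(w)|}{1-|f(w)|^{2}}\ :\ f\in Rat(\overline{\mathbb{A}}_r),\ \|f\|_{\infty,\overline{\mathbb{A}}_r}\leq 1\right\}.
\]

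It remains to evaluate $\Lambda_r(w)$. Replacing $f$ by the M\"obius composition $g=\frac{f-f(w)}{1-\overline{f(w)}f}$ --- again in the closed unit ball of $Rat(\overline{\mathbb{A}}_r)$, with $g(w)=0$ and $g'(w)=f'(w)/(1-|f(w)|^{2})$ --- shows that $\Lambda_r(w)=\sup\{|g'(w)|:\|g\|_{\infty,\overline{\mathbb{A}}_r}\leq 1,\ g(w)=0\}$, the Carath\'{e}odory metric density of $\mathbb{A}_r$ at $w$; a standard approximation identifies this with the supremum over the unit ball of $H^{\infty}(\mathbb{A}_r)$, where a normal-families argument provides an extremal function. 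The substance of the theorem is the identity $\Lambda_r(w)=\hat{K}_r(\overline{w},w)$, the diagonal value of the reproducing kernel attached to the annulus: one brings in the Abrahamse-type parametrization of the unit ball of $H^{\infty}(\mathbb{A}_r)$ by the characters of $\pi_1(\mathbb{A}_r)$ together with the associated twisted Hardy-space kernels, imposes the resulting Pick positivity at the single node $w$ carrying a first-order jet, and reads off that the sharp constant in the derivative estimate equals $\hat{K}_r(\overline{w},w)$ --- equivalently, $\hat{K}_r$ on the diagonal may be taken as the definition of this extremal quantity and then written in closed form through theta functions. Assembling the equivalences, $\overline{\mathbb{A}}_r$ is a spectral set for $T$ if and only if $|h(w)|\leq\hat{K}_r(\overline{w},w)^{-1}$.

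The functional-calculus reduction, the $2\times2$ norm computation, and the M\"obius normalization are all routine; the genuine obstacle is the evaluation $\Lambda_r(w)=\hat{K}_r(\overline{w},w)$. Unlike the disc, the annulus has no transitive automorphism group, so $w$ cannot be moved to a convenient point, and the one-node interpolation is controlled by an entire family of reproducing kernels rather than a single one; making the derivative estimate sharp requires either the explicit theta-function form of the degree-two extremal maps of $\mathbb{A}_r$ onto $\mathbb{D}$ or the duality between the $H^{\infty}(\mathbb{A}_r)$-multiplier norm and the Abrahamse kernels.
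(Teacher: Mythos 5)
First, note that the paper offers no proof of this statement at all: it is quoted verbatim as Corollary $1.1'$ of Misra's paper \cite{Misra}, so there is no internal argument to compare yours against. Judged on its own, your proposal gets the correct and standard first half of the argument: since $\sigma(T)=\{w\}$ lies in the open annulus, the holomorphic functional calculus gives $f(T)=f(w)I+h(w)f'(w)N$ with $N^2=0$, the positivity of $I-f(T)^*f(T)$ reduces to $|h(w)f'(w)|\le 1-|f(w)|^2$, and the M\"obius normalization converts the spectral-set condition into $|h(w)|\le \Lambda_r(w)^{-1}$, where $\Lambda_r(w)=\sup\{|g'(w)|:g\in Rat(\overline{\mathbb{A}}_r),\ \|g\|_{\infty,\overline{\mathbb{A}}_r}\le 1,\ g(w)=0\}$ is the Carath\'eodory density. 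All of this is airtight.

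The gap is that at this point the theorem is not yet proved: everything nontrivial is concentrated in the identity $\Lambda_r(w)=\hat{K}_r(\overline{w},w)$, and for that you offer only a gesture (``one brings in the Abrahamse-type parametrization \dots and reads off''), which your own closing paragraph concedes is the genuine obstacle. This step cannot be dispatched by citing a framework, and it is more delicate than the phrasing suggests. The classical solution of the Carath\'eodory extremal problem on a smoothly bounded multiply connected domain (Ahlfors--Garabedian) gives $\Lambda_r(w)=2\pi S(w,w)$ for the Szeg\H{o} kernel $S$ taken with respect to arc length, whose diagonal on the annulus is $\sum_{n}|w|^{2n}/(1+r^{2n+1})$ --- a different series from $\hat{K}_r(\overline{w},w)=\sum_{n}|w|^{2n}/(1+r^{2n})$ under the normalization of Example \ref{Example}, where both boundary circles carry the measure $dt/2\pi$ rather than arc length. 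So the identification is not a routine read-off: one must either track the weight on the inner circle carefully, or run the one-node first-order-jet Nevanlinna--Pick problem over the whole Abrahamse family of character-twisted kernels and identify the extremal character at $w$, and then reconcile the outcome with the specific kernel $\hat{K}_r$ appearing in the statement. Until that computation is actually carried out, your argument establishes only the (correct) equivalence of the spectral-set condition with $|h(w)|\le\Lambda_r(w)^{-1}$, not the stated theorem.
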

		
\begin{eg} \label{Example}

	Let $L^2(\partial \mathbb{A}_r)$ be the Hilbert space of Lebesgue measurable functions on $\partial \mathbb{A}_r$ that are square integrable where the inner product is defined by 
	\begin{equation*}
		\langle f,g\rangle=\frac{1}{2\pi}\overset{2\pi}{\underset{0}{\int}}f(e^{it})\overline{g(e^{it})}dt + \frac{1}{2\pi}\overset{2\pi}{\underset{0}{\int}}f(re^{it})\overline{g(re^{it})}dt\,.
	\end{equation*}
	Let $H^2(\partial \mathbb{A}_r)$ be the Hardy space consisting of holomorphic functions from $L^2(\partial \Ar)$. Then	the space $H^2(\partial \mathbb{A}_r)$ is a reproducing kernel Hilbert space (see \cite{Misra}) with the kernel function defined by
	 
	\begin{equation*}
		\hat{K}_r(z,w)=\overset{\infty}{\underset{n=-\infty}{\sum}}\frac{(z\overline{w})^n}{1+r^{2n}}, \quad z,w \in \Ar \,.
	\end{equation*}

For every $w \in \mathbb{A}_r,$ we have
\begin{equation}\label{eqn 701}
\begin{split}
\hat{K}_r(\overline{w},w)
= \overset{\infty}{\underset{n=-\infty}{\sum}}\frac{|w|^{2n}}{1+r^{2n}}
=\overset{\infty}{\underset{n=0}{\sum}}\frac{|w|^{2n}}{1+r^{2n}} + \overset{\infty}{\underset{n=1}{\sum}}\frac{|r|^{2n}}{|w|^{2n}(1+r^{2n})}
& \leq \overset{\infty}{\underset{n=0}{\sum}}|w|^{2n} + \overset{\infty}{\underset{n=1}{\sum}}|r\slash w|^{2n} \\
&=\frac{1}{1-|w|^2}+\frac{|w|^2}{|w|^2-r^2}\\
& \leq \frac{1}{1-|w|^2}+\frac{1}{|w|^2-r^2}\\
&= \frac{1-r^2}{(1-|w|^2)(|w|^2-r^2)}\\
& \leq \frac{1}{(1-|w|^2)(|w|^2-r^2)}.
\end{split}
\end{equation}
Now let us consider the following $2 \times 2$ commuting scalar matrices:
\[
T_w=\begin{pmatrix}
w &  (1-|w|^2)(|w|^2-r^2)\\
0 & w
\end{pmatrix}, \;\; \quad w \in \overline{\mathbb{A}}_r.
\]
First we prove that each $T_w$ is an $\mathbb{A}_r$-contraction. If $w \in \partial \mathbb{A}_r,$ the matrix $T_w$ is normal with $\sigma(T_w)=\{w\} \subset \partial \Ar$. Therefore, $T_w$ is an $\mathbb{A}_r$-contraction, in particular an $\Ar$-unitary. Again, suppose $w \in \mathbb{A}_r$. By Theorem \ref{Misra_cor}, we have that $\overline{\mathbb{A}}_r$ is a spectral set for $T_w$ if and only if $(1-|w|^2)(|w|^2-r^2) \leq \hat{K}_r(\overline{w},w)^{-1}$, which is true by Equation (\ref{eqn 701}). Thus, each $T_w$ is an $\mathbb{A}_r$-contraction.
	
	\end{eg}

	\begin{thm} \label{DC_2}
	
	 Any $m$-tuple of doubly commuting $2 \times 2$ scalar $\mathbb{A}_r$-contractions has a simultaneous $\mathbb{A}_r$-unitary dilation.
	  
	\end{thm}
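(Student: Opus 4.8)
The plan is to peel the tuple down to its common upper‑triangular form, read off the rigidity forced by double commutativity in dimension two, and then reduce to the already‑known dilation theorem for a single annulus. First I would simultaneously unitarily triangularize $T_1,\dots,T_m$: since a doubly commuting tuple is in particular commuting, the operators $T_1,\dots,T_m\in\mathcal{B}(\mathbb{C}^2)$ have a common eigenvector, and normalizing it and extending to an orthonormal basis of $\mathbb{C}^2$ we may assume
\[
T_i=\begin{bmatrix}a_i&c_i\\0&d_i\end{bmatrix},\qquad 1\le i\le m .
\]
This reduction is harmless because simultaneous unitary conjugation preserves the property of being an $\mathbb{A}_r$-contraction, preserves double commutativity, and transfers a simultaneous $\mathbb{A}_r$-unitary dilation back and forth.

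The heart of the argument is to expand $T_iT_j^*=T_j^*T_i$ in this basis for $i\ne j$: comparing $(1,1)$-entries gives $c_i\overline{c_j}=0$, so at most one of $c_1,\dots,c_m$ is nonzero, and comparing $(1,2)$-entries gives $c_i(\overline{d_j}-\overline{a_j})=0$. Two cases result. If every $c_i=0$, then each $T_i$ is diagonal, hence a normal $\mathbb{A}_r$-contraction, so $(T_1,\dots,T_m)$ is a normal $\mathbb{A}_r^m$-contraction and Theorem \ref{main} already delivers the dilation. If $c_\ell\ne 0$ for exactly one index $\ell$, then $d_j=a_j$ for every $j\ne\ell$, so $T_j=a_jI$ with $a_j\in\overline{\mathbb{A}}_r$ for $j\ne\ell$, while $T_\ell$ is just a single $2\times2$ $\mathbb{A}_r$-contraction — which in general is neither normal nor subnormal, so none of Theorems \ref{main}, \ref{subnormal}, \ref{subnormal1} applies to it directly.

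For this remaining case I would invoke Agler's dilation theorem on the annulus \cite{Agler}: since $T_\ell$ is an $\mathbb{A}_r$-contraction, $\overline{\mathbb{A}}_r$ is a \emph{complete} spectral set for $T_\ell$. Given an $n\times n$ matrix $F=[f_{pq}]$ over $Rat(\overline{\mathbb{A}}_r^m)$, substituting $a_j$ ($j\ne\ell$) for the other variables writes $F(T_1,\dots,T_m)=G(T_\ell)$, where $G=[g_{pq}]$ with $g_{pq}(z)=f_{pq}(a_1,\dots,z,\dots,a_m)\in Rat(\overline{\mathbb{A}}_r)$, and hence
\[
\|F(T_1,\dots,T_m)\|=\|G(T_\ell)\|\le\sup_{z\in\overline{\mathbb{A}}_r}\|G(z)\|\le\|F\|_{\infty,\,\overline{\mathbb{A}}_r^m};
\]
since also $\sigma_T(T_1,\dots,T_m)\subseteq\sigma(T_1)\times\dots\times\sigma(T_m)\subseteq\overline{\mathbb{A}}_r^m$, the set $\overline{\mathbb{A}}_r^m$ is a complete spectral set for $(T_1,\dots,T_m)$. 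Arveson's theorem (Theorem \ref{thm:W.Arveson}) then yields a commuting normal $m$-tuple $(U_1,\dots,U_m)$ on a Hilbert space $\mathcal{K}\supseteq\mathbb{C}^2$ with $\sigma_T(\underline{U})\subseteq b\mathbb{A}_r^m=(\partial\mathbb{A}_r)^m$ dilating $(T_1,\dots,T_m)$, and by Lemma \ref{Polyannulus-each annulus} each $U_i$ is an $\mathbb{A}_r$-unitary, which is the asserted simultaneous $\mathbb{A}_r$-unitary dilation. (One may instead build the dilation by hand: dilate $T_\ell$ to an $\mathbb{A}_r$-unitary $U_\ell$ on $\mathcal{K}_\ell$ via \cite{Agler}, dilate each scalar $a_j$ to an $\mathbb{A}_r$-unitary $V_j$ on $\mathcal{L}_j$ via Theorem \ref{main}, and pass to $U_\ell\otimes I\otimes\cdots$ and $\cdots\otimes V_j\otimes\cdots$ on $\mathcal{K}_\ell\otimes\bigotimes_{j\ne\ell}\mathcal{L}_j$; these commute, are $\mathbb{A}_r$-unitaries, and Proposition \ref{Power} confirms they dilate the tuple.)

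The step I expect to be the main obstacle is the structural one: recognizing that double commutativity is rigid enough, for $2\times2$ matrices, to force all but one of the $T_i$ to be scalar, after which the problem essentially collapses to the already‑solved one‑operator dilation on an annulus. The only subtlety beyond that is that the surviving $T_\ell$ can fail to be subnormal, so one genuinely needs Agler's theorem rather than the normal/subnormal dilation results proved earlier in the paper.
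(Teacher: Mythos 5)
Your argument is correct and follows essentially the same route as the paper: simultaneous unitary triangularization, the computation showing double commutativity forces all but one $T_j$ to be scalar, and then Agler's annulus dilation theorem for the one non-normal matrix. The only (immaterial) difference is in the final assembly — the paper dilates $T_\ell$ to an $\mathbb{A}_r$-unitary $N_\ell$ via Agler, sets $N_j=a_jI_{\mathcal K}$ for $j\ne\ell$, and then applies Theorem \ref{main} to the resulting normal tuple, whereas you verify the complete spectral set property directly and invoke Arveson (or the tensor-product construction); both are routine once the structural reduction is in place.
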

	\begin{proof}
	
		Let $B_1, \dotsc, B_m$ be doubly commuting $2 \times 2$ scalar $ \mathbb{A}_r$-contractions. Since they commute, they can have simultaneously upper triangular form with respect to some fixed orthonormal basis, i.e. without loss of generality
		\[
		B_j = \begin{pmatrix}
			a_j & c_j \\ 0 & b_j\\
		\end{pmatrix}, \ \ \ (j=1, 2, \dotsc, m).
		\]
		If each $c_j=0$ then $(B_1, \dotsc, B_m)$ is a normal $\mathbb{A}_r^m$-contraction and the desired conclusion follows from Theorem \ref{main}. So, we assume at least one of them say $c_1 \ne 0$. For all $j \ne 1,$ we have that $B_jB_1^*=B_1^*B_j$ which implies that
		\begin{equation*}
			\begin{split}
				 & \begin{pmatrix}
					a_j & c_j \\ 0 & b_j\\
				\end{pmatrix}\begin{pmatrix}
					\overline{a_1} & 0 \\ \overline{c_1} & \overline{b_1}\\
				\end{pmatrix} =\begin{pmatrix}
					\overline{a_1} & 0 \\ \overline{c_1} & \overline{b_1}\\
				\end{pmatrix}\begin{pmatrix}
					a_j & c_j \\ 0 & b_j\\
				\end{pmatrix} \\
				 \text{i.e. } & \begin{pmatrix}
					a_j\overline{a_1}+c_j\overline{c_1} & c_j\overline{b_1} \\ b_j\overline{c_1} & b_j\overline{b_1}\\
				\end{pmatrix}=\begin{pmatrix}
					\overline{a_1}a_j & \overline{a_1}c_j \\ \overline{c_1}a_j & \overline{b_1}b_j\\
				\end{pmatrix}.\\
			\end{split}
		\end{equation*}
		Consequently, $c_j=0$ and $a_j=b_j$ for every $j \neq 1$. Therefore, 
		\[
		B_1 = \begin{pmatrix}
			a_1 & c_1 \\ 0 & b_1\\
		\end{pmatrix} \; \; \ \mbox{ and } \;\; \ B_j = a_j\begin{pmatrix}
			1 & 0 \\ 0 & 1\\
		\end{pmatrix}, \ \  \ \ (j\neq 1).
		\]
		Then, Agler's dilation theorem \cite{Agler} implies that there exist a Hilbert space $\mathcal{K} \supseteq \C^2$ and an $\mathbb{A}_r$-unitary $N_1$ on $\mathcal{K}$ such that
		\[
		f(B_1)=P_\mathcal{H}f(N_1)|_{\mathcal{H}}
		\]
		for every rational function $f$ with poles off $\overline{\mathbb{A}}_r.$ Set $N_j=a_jI_\mathcal{K}$ for $j \ne 1.$ Then $\underline{N}=(N_1, \dotsc, N_m)$ on $\mathcal{K}$ is a normal $\mathbb{A}_r^m$-contraction which dilates to an $\Arm$-unitary by Theorem \ref{main}. Hence, the proof is complete.
		
	\end{proof}
	

For a compact set $X\subset \C^m$ and a commuting tuple of operators $(T_1, \dots , T_m)$ for which $X$ is a spectral set, the aim of rational dilation is to realize $f(T_1, \dots , T_m)$ as a compression of $f(N_1,\dots , N_m)$ for every $f\in Rat\,(X)$, where $(N_1,\dots , N_m)$ is a commuting normal tuple with $\sigma_T(N_1, \dots , N_m)\subseteq bX$. Now if $X=\ov{\mathbb A}_r^m$ and if $B_1, \dots , B_m$ are doubly commuting $2 \times 2$ scalar matrices such that each $B_j$ is an $\Ar$-contraction, then we show that for every finite dimensional subspace $\mathcal R$ of $Rat\,(\Arm)$ there is an $\Arm$-unitary $(U_1, \dots , U_m)$ consisting of scalar matrices such that $f(B_1, \dots , B_m)$ is a compression of $f(U_1, \dots , U_m)$ for every $f\in \mathcal R$. Note that the tuple $(B_1, \dots , B_m)$ is an $\Arm$-contraction by Theorem \ref{DC_2}, because, it dilates to an $\Arm$-unitary. In this context, we state an interesting and useful result due to Hartz and Lupini.

\begin{thm}(\cite{Hartz}, Theorem 3.3)\label{Hartz1}
        Let $A$ be a commutative unital $C^*$-algebra and $S \subseteq A$ be an
operator system with dim$(S) < \infty$ and let $\phi: S \to
\mathcal{B}(\mathcal{H})$ be a unital completely positive map with
dim$(\mathcal{H})< \infty.$ Then $\phi$ dilates to a finite-dimensional
representation of $A,$ i.e. there exist a finite dimensional space
$\mathcal{K}$ containing $\mathcal{H}$ and a unital $*$-representation
$\pi: A \to \mathcal{B}(\mathcal{K})$ such that
$\phi(s)=P_\mathcal{H}\pi(s)|_{\mathcal{H}}$ for all $s \in S.$
\end{thm}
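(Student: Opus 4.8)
The plan is to reduce the assertion to a finite moment problem for operator-valued measures, to prove that an extremal solution of this problem is finitely supported, and then to read off a finite-dimensional representation from such a solution. Write $n = \dim \mathcal{H}$ and $d = \dim S$. By the Gelfand--Naimark theorem we may identify the commutative unital $C^*$-algebra $A$ with $C(X)$, where $X$ is the compact Hausdorff maximal ideal space of $A$, so that $S$ is a finite-dimensional operator subsystem of $C(X)$ containing the constant function $1$.

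First I would invoke Arveson's extension theorem (see \cite{Paulsen}) to extend the unital completely positive map $\phi \colon S \to \mathcal B(\mathcal{H})$ to a unital completely positive map $\tilde\phi \colon C(X) \to \mathcal B(\mathcal{H})$. By the one-to-one correspondence between bounded linear maps on $C(X)$ and regular bounded $\mathcal B(\mathcal H)$-valued measures recalled in Section \ref{basic}, the positivity and unitality of $\tilde\phi$ produce a regular positive $\mathcal B(\mathcal H)$-valued measure $E$ on $X$ with $E(X) = I$ and $\tilde\phi(f) = \int_X f\, dE$. Now consider the set $\mathcal{C}$ of all regular positive $\mathcal B(\mathcal H)$-valued measures $F$ on $X$ satisfying the finitely many constraints $\int_X f\, dF = \phi(f)$ for all $f \in S$ (the choice $f = 1$ encoding $F(X) = I$). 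This set is nonempty (it contains $E$), convex, and weak-$*$ compact, so by the Krein--Milman theorem it has an extreme point $E_0$.

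The main step, and the point where both finiteness hypotheses enter, is to prove that $E_0$ is supported on finitely many points of $X$. Suppose instead that one could find $N := d\,n^2 + 1$ pairwise disjoint Borel sets $B_1, \dots, B_N$ with $E_0(B_l) \ne 0$ for each $l$. Choose a real basis $f_1, \dots, f_d$ of the self-adjoint part $S_{sa}$ of $S$. The homogeneous real-linear system $\sum_{l=1}^{N} c_l \big(\int_{B_l} f_i\, dE_0\big) = 0$ for $1 \le i \le d$ consists of $d$ equations valued in the $n^2$-dimensional real space of self-adjoint operators on $\mathcal{H}$, hence amounts to at most $d\,n^2$ real scalar equations in the $N > d\,n^2$ unknowns $c_1, \dots, c_N$; a nonzero real solution therefore exists, and after rescaling we may assume $|c_l| \le 1$. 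Setting $\Delta(\cdot) = \sum_l c_l\, E_0(\cdot \cap B_l)$ yields a nonzero $\mathcal B(\mathcal H)$-valued measure with $\int_X f\, d\Delta = 0$ for every $f \in S$ (the self-adjoint constraints extend to $S = S_{sa} + i S_{sa}$), while $E_0 \pm \Delta = \sum_l (1 \pm c_l)\, E_0(\cdot \cap B_l) + E_0(\cdot \cap (X \setminus \bigcup_l B_l)) \ge 0$. Thus $E_0 \pm \Delta \in \mathcal{C}$ and $E_0 = \tfrac12[(E_0 + \Delta) + (E_0 - \Delta)]$, contradicting extremality. Hence no such family exists, so $E_0$ has no continuous part and carries at most $d\,n^2$ atoms; that is, $E_0 = \sum_{j=1}^{k} P_j\, \delta_{x_j}$ with $x_j \in X$, $P_j \ge 0$, $\sum_j P_j = I$, and $k \le d\,n^2$. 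I expect this atomicity statement to be the crux: the delicate issue is simultaneously ruling out a continuous part and bounding the number of atoms, which is exactly what the disjoint-set perturbation accomplishes.

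It remains to dilate the finitely supported measure $E_0$ to a finite-dimensional spectral measure and to extract the representation; this realizes the finite-dimensional instance of Naimark's theorem (Theorem \ref{Naimark}) explicitly. Factor each positive operator as $P_j = A_j^* A_j$ with $A_j \colon \mathcal{H} \to \mathbb{C}^{r_j}$, $r_j = \operatorname{rank} P_j \le n$, put $\mathcal{K} = \bigoplus_{j=1}^{k} \mathbb{C}^{r_j}$, and define $V \colon \mathcal{H} \to \mathcal{K}$ by $Vx = (A_1 x, \dots, A_k x)$; then $V^* V = \sum_j A_j^* A_j = \sum_j P_j = I$, so $V$ is an isometry and $\dim \mathcal{K} = \sum_j r_j \le k\,n \le d\,n^3 < \infty$. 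Letting $Q_j \in \mathcal B(\mathcal K)$ be the orthogonal projection onto the $j$-th summand, the family $\{Q_j\}$ is a spectral measure on $\{x_1, \dots, x_k\}$ with $V^* Q_j V = A_j^* A_j = P_j$. Finally define $\pi \colon A = C(X) \to \mathcal B(\mathcal{K})$ by $\pi(f) = \sum_{j=1}^{k} f(x_j)\, Q_j$; since the $Q_j$ are mutually orthogonal projections summing to the identity, $\pi$ is a unital $*$-representation (a direct sum of the characters $\operatorname{ev}_{x_j}$ with multiplicities $r_j$). Identifying $\mathcal{H}$ with $V\mathcal{H} \subseteq \mathcal{K}$, we obtain for every $s \in S$ that $P_{\mathcal{H}}\, \pi(s)|_{\mathcal{H}} = V^* \pi(s) V = \sum_j s(x_j)\, P_j = \int_X s\, dE_0 = \phi(s)$, which is the asserted finite-dimensional dilation and completes the plan.
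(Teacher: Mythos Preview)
The paper does not supply its own proof of this theorem: it is quoted verbatim as Theorem~3.3 of Hartz--Lupini \cite{Hartz} and then applied as a black box in the subsequent result. So there is no in-paper argument to compare against.

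That said, your proposal is a correct and self-contained proof. The reduction $A \cong C(X)$, the Arveson extension to a unital completely positive map on $C(X)$, and the passage to a positive $\mathcal B(\mathcal H)$-valued measure are all standard and valid. The heart of your argument---taking an extreme point $E_0$ of the (weak-$*$ compact, convex) set of positive operator-valued measures matching $\phi$ on $S$, and showing by a dimension count that $E_0$ can have at most $d\,n^2$ disjoint Borel sets of nonzero mass---is a clean operator-valued Tchakaloff/Richter-type argument, and the details (the perturbation $\Delta$, the bound $|c_l|\le 1$ ensuring $E_0\pm\Delta\ge 0$, the observation that $\Delta(B_m)=c_m E_0(B_m)\ne 0$ for some $m$) all check out. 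The final explicit Naimark dilation of the finitely supported $E_0$ is routine and gives the finite-dimensional $*$-representation with the stated dimension bound $\dim\mathcal K\le d\,n^3$.

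For context, the original Hartz--Lupini proof is phrased in the language of matrix convexity (matrix extreme points of matrix state spaces and Webster--Winkler type results), whereas you have recast the problem as a finite operator moment problem and argued directly with ordinary extreme points of measures. Your route is arguably more elementary and yields an explicit dimension bound for $\mathcal K$; the matrix-convexity route situates the result in a broader framework but requires more machinery. Either way, the paper under review only \emph{uses} the statement, so your proof goes well beyond what the paper itself provides.
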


Now we state and prove our desired result and conclude this Section.
        \begin{thm}
Let $B_1, \dotsc , B_m$ be doubly commuting $2 \times 2$ scalar $\mathbb{A}_r$-contractions acting on $\mathcal{H}=\C^2$. Let $\mathcal{R} \subset Rat(\overline{\mathbb{A}}_r^m)$ be a finite
dimensional subspace. Then there exist a finite dimensional Hilbert
space $\mathcal{K} \supseteq \mathcal{H}$ and an $m$-tuple $(U_1, \dotsc, U_m)$ of commuting $\mathbb{A}_r$-unitaries such that
                \[
                f(B_1, \dotsc, B_m)=P_\mathcal{H}f(U_1, \dotsc, U_m)|\mathcal{H}
                \]
                for every $f \in \mathcal{R}.$
        \end{thm}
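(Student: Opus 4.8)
The plan is to extract from Theorem~\ref{DC_2} a unital completely positive map on $C(b\Arm)$ that implements the functional calculus of $\underline{B}=(B_1,\dots,B_m)$, restrict it to a finite-dimensional operator system carrying $\mathcal R$, and then shrink the dilation space to a finite-dimensional one via the Hartz--Lupini theorem (Theorem~\ref{Hartz1}). Concretely, since $B_1,\dots,B_m$ are doubly commuting $2\times 2$ scalar $\Ar$-contractions, Theorem~\ref{DC_2} furnishes a (possibly infinite-dimensional) Hilbert space $\mathcal K_0\supseteq\mathcal H$ and an $\Arm$-unitary $\underline{W}=(W_1,\dots,W_m)$ on $\mathcal K_0$ with $f(\underline{B})=P_{\mathcal H}f(\underline{W})|_{\mathcal H}$ for all $f\in Rat(\ov{\mathbb A}_r^m)$. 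Let $E$ be the joint spectral measure of $\underline{W}$; it is supported on $\sigma_T(\underline{W})\subseteq b\Arm$. Define $\Phi\colon C(b\Arm)\to\mathcal B(\mathcal H)$ by $\Phi(g)=P_{\mathcal H}\big(\int_{b\Arm}g\,dE\big)\big|_{\mathcal H}$. Being the compression of the $*$-representation $g\mapsto\int_{b\Arm}g\,dE$, the map $\Phi$ is unital and completely positive, and for every $f\in Rat(\ov{\mathbb A}_r^m)$ we have $\Phi(f|_{b\Arm})=P_{\mathcal H}f(\underline{W})|_{\mathcal H}=f(\underline{B})$.

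Next I would pass to a finite-dimensional operator system. Put $S=\C\cdot 1+\mathcal R|_{b\Arm}+\overline{\mathcal R|_{b\Arm}}\subseteq C(b\Arm)$, the linear span of the constants together with the restrictions to $b\Arm$ of the functions in $\mathcal R$ and of their complex conjugates. Since $\mathcal R$ is finite-dimensional, so is $S$; moreover $S$ is self-adjoint and contains the unit, hence is an operator subsystem of the commutative unital $C^*$-algebra $A=C(b\Arm)$. The restriction $\phi:=\Phi|_S\colon S\to\mathcal B(\mathcal H)$ is again unital and completely positive, and $\dim\mathcal H=\dim\C^2=2<\infty$. Theorem~\ref{Hartz1} therefore provides a finite-dimensional Hilbert space $\mathcal K\supseteq\mathcal H$ and a unital $*$-representation $\pi\colon C(b\Arm)\to\mathcal B(\mathcal K)$ such that $\phi(s)=P_{\mathcal H}\pi(s)|_{\mathcal H}$ for every $s\in S$.

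It remains to read off the tuple. For $1\le j\le m$ let $\zeta_j\in C(b\Arm)$ be the $j$-th coordinate function and set $U_j:=\pi(\zeta_j)$. As $\pi$ is a unital $*$-representation of a commutative $C^*$-algebra, $U_1,\dots,U_m$ are commuting normal operators, and since $\zeta_j(b\Arm)\subseteq\partial\Ar$ we get $\sigma(U_j)\subseteq\partial\Ar$; thus each $U_j$ is an $\Ar$-unitary and, by Lemma~\ref{Polyannulus-each annulus}, $\underline{U}=(U_1,\dots,U_m)$ is an $\Arm$-unitary, so in particular $\sigma_T(\underline{U})\subseteq b\Arm\subseteq\ov{\mathbb A}_r^m$ and $f(\underline{U})$ is defined for every $f\in Rat(\ov{\mathbb A}_r^m)$. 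For such an $f=p/q$ the restriction $q|_{b\Arm}$ is invertible in $C(b\Arm)$, hence $\pi(q|_{b\Arm})$ is invertible and $f(\underline{U})=p(\underline{U})q(\underline{U})^{-1}=\pi(p|_{b\Arm})\pi(q|_{b\Arm})^{-1}=\pi(f|_{b\Arm})$, using that $\pi$ is a unital homomorphism and that the Taylor rational functional calculus of the commuting normal tuple $\underline{U}$ agrees with this polynomial-and-inverse expression because $\sigma_T(\underline{U})\subseteq b\Arm$. Now for any $f\in\mathcal R$ we have $f|_{b\Arm}\in S$, so
\[
f(\underline{B})=\Phi(f|_{b\Arm})=\phi(f|_{b\Arm})=P_{\mathcal H}\pi(f|_{b\Arm})|_{\mathcal H}=P_{\mathcal H}f(\underline{U})|_{\mathcal H},
\]
which is exactly the asserted identity.

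The main obstacle, and essentially the only nontrivial input beyond assembling these ingredients, is verifying the two functional-calculus compatibilities: that the map $\Phi$ produced from Theorem~\ref{DC_2} genuinely restricts the rational functional calculus of $\underline{B}$ to $b\Arm$, and that the $*$-representation $\pi$ intertwines the functional calculus on $C(b\Arm)$ with that of the normal tuple $\underline{U}$, i.e.\ $f(\underline{U})=\pi(f|_{b\Arm})$. Granting those, the proof is a concatenation of Theorem~\ref{DC_2}, the compression of a spectral measure, and Theorem~\ref{Hartz1}.
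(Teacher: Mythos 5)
Your proposal is correct and follows essentially the same route as the paper: extract the $\Arm$-unitary dilation from Theorem \ref{DC_2}, compress its $C(b\Arm)$-functional calculus to obtain a unital completely positive map on the finite-dimensional operator system spanned by $1$, $\mathcal R$ and $\overline{\mathcal R}$, apply the Hartz--Lupini theorem, and set $U_j=\pi(z_j)$. Your write-up is in fact somewhat more careful than the paper's, in that it explicitly verifies the two functional-calculus compatibilities ($\Phi(f|_{b\Arm})=f(\underline B)$ and $f(\underline U)=\pi(f|_{b\Arm})$) that the paper takes for granted.
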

\begin{proof}
We apply Theorem \ref{Hartz1} with $A=C(b\mathbb{A}_r^m)$ and
\[
S=\text{span}\{1, f, \overline{f} : f \in \mathcal{R}\} \subset
C(b\mathbb{A}_r^m).
\]
By Theorem \ref{DC_2}, there exist a Hilbert space $\mathcal{K}_0
\supseteq \mathcal{H}$ and commuting $\Ar$-unitaries
$N_1, \dotsc, N_m $ acting on $\mathcal{K}_0$ such that
\[
f(B_1, \dotsc, B_m)=P_{\mathcal{H}}f(N_1, \dotsc, N_m)|_{\mathcal{H}}
\]
for all $f \in Rat(\mathbb{A}_r^m)$. Now consider the map
$
\phi_N:C(b\mathbb{A}_r^m) \to \mathcal{B}(\mathcal{K})$ defined by
\[
\phi_N(g)=g(N_1, \dotsc, N_m),
\]
which is continuous and is a representation of $C(b\Arm)$. Then the map $\phi: S \to
\mathcal{B}(\mathcal{H})$, defined by
$
\phi(q)=P_\mathcal{H}\phi_N(q)|_{\mathcal{H}}$, is a unital completely positive map with dim$(\mathcal{H}) < \infty.$ By
Theorem \ref{Hartz1}, $\phi$ dilates to a finite-dimensional
representation $\pi$ of $C(b\mathbb{A}_r^m),$ such that
\[
f(B_1, \dotsc, B_m)=\phi(f)=P_\mathcal{H}\pi(f)|_{\mathcal{H}}, \quad \text{for every } f \in \mathcal{R}.
\]
If we define $U_j=\pi(z_j)$ for $j=1,2, \dotsc,
m,$ then $U_1, \dots , U_m$ are commuting matrices and we have $f(U_1, \dotsc, U_m)=\pi(f)$ for all $f$  in
$Rat(\overline{\mathbb{A}}_r^m)$. Also, $\pi$ is a homomorphism and thus $(U_1, \dots , U_m)$ is an $\Arm$-unitary. Hence, each $U_j$ is an $\Ar$-unitary and the proof is complete.

\end{proof}	
	
\vspace{0.3cm}	

\noindent \textbf{Concluding remarks.} A next step to the results of this article should be an attempt to determine if a doubly commuting $\Arm$-contraction possesses an $\Arm$-unitary dilation. Note that if $A_1, \dots , A_m$ are doubly commuting $\Ar$-contractions, then it does not follow that the tuple $(A_1, \dots , A_m)$ is an $\Arm$-contraction. However, in this paper we have made some progress by obtaining such a dilation when $A_1, \dots , A_m$ are doubly commuting $2 \times 2$ scalar matrices. The general case, i.e. whether or not rational dilation succeeds on $\ov{\mathbb A}_r^m$, seems to be very nontrivial and challenging at this point.


\begin{thebibliography}{9}
	
\vspace{0.4cm}	
	
	
		\bibitem{Ab1}
M. B. Abrahamse, \textit{The Pick interpolation theorem for finitely connected domains}, Michigan Math. J., 26 (1979), 195 -- 203.\\

\bibitem{Ab2}
M. B. Abrahamse and R.G. Douglas, \textit{A class of subnormal operators related to multiply-connected domains}, Advances in Math., 19 (1976), 106 -- 148.\\

\bibitem{Agler}
J. Agler, \textit{Rational dilation on an annulus}, Ann. of Math. (2), 121 (1985),  537 -- 563.\\

\bibitem{ahr}
J. Agler, J. Harland and B. J. Raphael,\textit{ Classical function theory,
operator dilation theory, and machine computation on multiply-connected
domains}, Mem. Amer. Math. Soc., 191 (2008), 892.\\

\bibitem{AgMcC}
J. Agler and J. E. M\raise.45ex\hbox{c}Carthy, {\em Pick Interpolation and Hilbert Function Spaces},
Graduate Studies in Mathematics {44},  Amer. Math. Soc.,
Providence, R.I. 2002.\\

\bibitem{AM05}
J. Agler and J. E. M\raise.45ex\hbox{c}Carthy,
\textit{Distinguished varieties}, Acta Math., 194 (2005),
133 -- 153.\\

\bibitem{ay-jfa}
J. Agler and N. J. Young,\textit{ A commutant lifting theorem for a domain in $\mathbb{C}^2$ and spectral
interpolation}, J. Funct. Anal., 161 (1999), 452 -- 477.\\

\bibitem{ando}
T. Ando, \textit{On a pair of commutative contractions}, Acta. Sci. Math. (Szeged), 24 (1963), 88 -- 90.\\


\bibitem{Arveson}
W. B. Arveson, \textit{Subalgebras of C$^*$-algebras}, Acta Math., 123 (1969), 141 -- 224. \\

\bibitem{ArvesonI}
W. B. Arveson, \textit{Subalgebras of C$^*$-algebras II}, Acta Math., 128 (1972), 271 -- 308.\\

\bibitem{Axler}
S. Axler, P. Bourdon and R. Wade, \textit{Harmonic function theory}, Vol. 137, Springer Science $\&$ Business Media, 2013.\\

\bibitem{Ball}
J. A. Ball, \textit{A lifting theorem for operators models of finite rank on multiply connected domains}, J. Operator theory, 1 (1979), 3 -- 25.\\




\bibitem{Dmitry}
G. Bello and D. Yakubovich, \textit{An operator model in the annulus}, J. Operator Theory, To appear, Available at https://arxiv.org/abs/2106.08757.\\

\bibitem{NagyFoias}
B. Sz.-Nagy, C. Foias, L. Kerchy and H. Bercovici, \textit{Harmonic analysis of operators on Hilbert space}, Universitext Springer, New York, 2010.\\

\bibitem{tirtha-sourav}
T. Bhattacharyya, S. Pal and S. Shyam Roy, \textit{Dilations of
$\Gamma$- contractions by solving operator equations}, Adv.
in Math., 230 (2012), 577 -- 606.\\

\bibitem{Conway}
J. B. Conway, \textit{The theory of subnormal operators}, Vol. 36, Amer. Math. Soc., 1991.\\

\bibitem{Curto}
R. Curto, \textit{Spectral inclusion for doubly commuting subnormal $n$-tuples}, Proc. Amer. Math. Soc., 83 (1981), 730 -- 34.\\



\bibitem{G.Dales}
H. G. Dales, \textit{Boundaries and peak points for Banach function algebras}, Proc. London Math. Soc. (3), 22 (1971), 121 -- 136.\\

\bibitem{DM} M. A. Dritschel and S. McCullough, \textit{The failure of rational dilation on a triply connected domain}, J. Amer. Math. Soc., 18 (2005), 873 -- 918.\\

\bibitem{Gerhold_Shalit}
M. Gerhold, S. K. Pandey, O. M. Shalit and B. Solel, \textit{Dilations of unitary tuples}, J. Lond. Math. Soc. (2), 104 (2021), 2053 -- 2081.\\

\bibitem{Hartz}
M. Hartz and M. Lupini, \textit{Dilation theory in finite dimensions and matrix convexity}, Israel J. Math., 245 (2021), 39 -- 73.\\

\bibitem{Kadison}
R. V. Kadison and J. R. Ringrose, \textit{Fundamentals of the theory of operator algebras, Volume I: Elementary theory}, New York: Academic press, 1986.\\

\bibitem{comm lifting}
S. McCullough and S. Sultanic, \textit{Agler-commutant lifting on an annulus}, Integral equations operator theory, 72 (2012),  449 -- 482.\\

\bibitem{Misra}
G. Misra, \textit{Curvature inequalities and extremal properties of bundle shifts}, J. Operator Theory, 11 (1984), 305 -- 317. \\

\bibitem{Murphy}
G. Murphy, \textit{C*-algebras and operator theory}, Academic press, 2014.\\

\bibitem{P-SM} J. Pascoe and S. McCullough, \textit{Geometric dilations and operator annuli}, https://arxiv.org/abs/2202.08872.\\

\bibitem{pal-shalit}
S. Pal and O. M. Shalit, \textit{Spectral sets and distinguished varieties in the symmetrized bidisc}, J. Funct. Anal., 266 (2014), 5779 -- 5800. \\

\bibitem{N-S1} S. Pal and N. Tomar, \textit{A model for an operator in an annulus}, https://arxiv.org/abs/2209.00373.\\


\bibitem{Parrott}
S. Parrott, \textit{Unitary dilations for commuting contractions}, Pac. J. Math., 34 (1970), 481 -- 490.\\

\bibitem{Paulsen}
V. Paulsen, \textit{Completely bounded maps and operator algebras}, Cambridge University Press, 2002.\\

\bibitem{Putinar}
M. Putinar, \textit{Spectral inclusion for subnormal $n$-tuples}, Proc. Amer. Math. Soc., 90 (1984), 405 -- 406.\\

\bibitem{Range}
R. M. Range, \textit{Holomorphic functions and integral representations in several complex variables}, Springer Science $\&$ Business Media,  Vol. 108, 1998.\\

\bibitem{Sarason}
D. Sarason, \textit{The $H^p$ spaces of an annulus}, Mem. Amer. Math. Soc. (56), 78 (1965).\\

\bibitem{nagy1}
Bela Sz.-Nagy, \textit{Sur les contractions de l'espace de
Hilbert}, Acta Sci. Math., 15 (1953), 87 -- 92.\\

\bibitem{TaylorI}
J. L. Taylor, \textit{The analytic-functional calculus for several commuting operators}, Acta Math., 125 (1970), 1 -- 38.\\

\bibitem{TaylorII}
J. L. Taylor, \textit{A joint spectrum for several commuting operators}, J. Funct. Anal., 6 (1970), 172 -- 191.\\

\bibitem{Tsikalas}
G. Tsikalas, \textit{A von Neumann type inequality for an annulus}, https://arxiv.org/abs/2106.06013.\\

\bibitem{von-Neumann}
J. von Neumann, \textit{Eine Spektraltheorie f\"{u}r allgemeine Operatoren eines unit\"{a}ren Raumes}, Math. Nachr., 4 (1951), 258 -- 281.
		
		
	\end{thebibliography}
\end{document}